\documentclass[a4paper]{amsart}
\usepackage{todonotes}
\usepackage{a4wide}
\usepackage{amsmath}
\usepackage{amsthm}
\usepackage{amsfonts}
\usepackage{amssymb}
\usepackage{mathrsfs}
\usepackage{tikz}
\usepackage{url}
\usepackage[bookmarks=false,pdfborder={0 0 0.05}]{hyperref}
\hypersetup{colorlinks=true, citecolor=darkblue, linkcolor=darkblue}

\usepackage{graphicx}
\usepackage[labelfont=up]{caption}
\usepackage{subcaption} 
\usetikzlibrary{arrows,positioning,decorations.markings,calc,shapes,intersections,spy}

\theoremstyle{plain}
\newtheorem{theorem}{Theorem}[section]
\newtheorem{lemma}[theorem]{Lemma}
\newtheorem{corollary}[theorem]{Corollary}
\newtheorem{proposition}[theorem]{Proposition}
\newtheorem{conjecture}[theorem]{Conjecture}
\newtheorem{problem}[theorem]{Problem}

\theoremstyle{definition}
\newtheorem{definition}[theorem]{Definition}
\newtheorem{remark}[theorem]{Remark}
\newtheorem{example}[theorem]{Example}

\definecolor{darkblue}{rgb}{0,0,0.7} 
\newcommand{\darkblue}{\color{darkblue}} 
\newcommand{\Dfn}[1]{\emph{\darkblue #1}} 

\def \B {\mathcal B}
\def \L {\mathscr L}
\def \H {\mathscr H}
\def \U {\mathscr U}
\def \T {\mathscr T}
\def \e {\mathbf e}
\def \b {\mathbf b}

\def \z {\mathbf z}
\def \x {\mathbf x}
\def \y {\mathbf y}
\def \w {\mathbf w}
\def \proj #1{\widehat{#1}}
\def \closure #1{\overline{#1}}
\def \conjugate #1{\overline{#1}}
\def \hyper {\mathrm{hyp}}
\def \para {\mathrm{par}}
\def \spec {\infty}

\DeclareMathOperator{\cone}{\mathsf{cone}}
\DeclareMathOperator{\sign}{sgn}
\DeclareMathOperator{\affine}{\mathsf{aff}}
\DeclareMathOperator{\convex}{\mathsf{conv}}
\DeclareMathOperator{\support}{\mathsf{supp}}
\DeclareMathOperator{\inversion}{\mathsf{inv}}

%
%
%
%

\title[Limit Directions for Lorentzian Coxeter Systems]{Limit Directions for Lorentzian Coxeter Systems}

\author[H.~Chen]{Hao Chen$^{ \dagger}$}
\address[H.~Chen]{Freie Universit\"at Berlin, Institut f\"ur Mathematik, Arnimallee 2, 14195 Berlin, Deutschland}
\email{haochen@math.fu-berlin.de}
\urladdr{http://page.mi.fu-berlin.de/haochen}
\thanks{$^\dagger$supported by the Deutsche Forschungsgemeinschaft within the Research Training Group ``Methods for Discrete Structures'' (GRK 1408).}

\author[J.-P.~Labb\'e]{Jean-Philippe Labb\'e$^{ \ddagger}$}
\address[J.-P. Labb\'e]{Freie Universit\"at Berlin, Institut f\"ur Mathematik, Arnimallee~2, 14195 Berlin, Deutschland}
\email{labbe@math.fu-berlin.de}
\urladdr{http://page.mi.fu-berlin.de/labbe}
\thanks{$^\ddagger$supported by a FQRNT Doctoral scholarship and SFB Transregio ``Discretization in Geometry and Dynamics'' (TRR 109).}

\keywords{Coxeter groups, Lorentz space, limit set, Coxeter arrangement, infinite root systems, fractal}
\subjclass[2010]{Primary 20F55, 37B05; Secondary 22E43, 52C35}

%
%
%
%

\begin{document}

\begin{abstract}
Every Coxeter group admits a geometric representation as a group generated by reflections in a real vector space. In the projective 
representation space, \emph{limit directions} are limits of injective sequences in the orbit of some base point. Limit roots are limit 
directions that can be obtained starting from simple roots. In this article, we study the limit directions arising from \emph{any} 
point when the representation space is a Lorentz space. In particular, we characterize the light-like limit directions using 
eigenvectors of infinite-order elements. This provides a spectral perspective on limit roots, allowing for efficient computations. 
Moreover, we describe the space-like limit directions in terms of the projective Coxeter arrangement.
\end{abstract}

\maketitle

%
%

\section{Introduction}\label{sec:Intro}

Every Coxeter system has a linear representation as a reflection group acting on a real vector space endowed with a canonical bilinear form, see~\cite{bourbaki_elements_1968} or~\cite{humphreys_reflection_1992}. The unit basis vectors of the representation space are called simple roots. The set of vectors in the orbits of simple roots under the action of the Coxeter group are called roots. The set of roots forms the root system associated to the Coxeter system. Every root corresponds to a unique orthogonal reflecting hyperplane. The set of reflecting hyperplanes is called the Coxeter arrangement.

For infinite Coxeter groups, Vinberg introduced more general representations using different bilinear forms for the representation space~\cite{vinberg_discrete_1971}. We use the notation $(W,S)_{B}$ to denote a Coxeter system~$(W,S)$ associated with a matrix $B$ that determines the bilinear form used to represent~$(W,S)$. We refer to $(W,S)_B$ as a \emph{geometric Coxeter system}. When the bilinear form has signature~$(n-1,1)$, where~$n$ is the rank of $(W,S)$, the representation space is a Lorentz space. In this case, we say that~$(W,S)_{B}$ is a \emph{Lorentzian Coxeter system}. The action of the Coxeter group on Lorentz space induces an action on hyperbolic space. Coxeter groups acting on hyperbolic space with compact or finite volume fundamental domains are well studied; see for instance~\cite{vinberg_discrete_1971} \cite[Sections~{6.8-9}]{humphreys_reflection_1992} \cite[Chapter~7]{ratcliffe_foundations_2006} and \cite[Sections~{10.3-4}]{abramenko_buildings_2008}.

Let $(W,S)_B$ be a geometric Coxeter system. The Coxeter group $W$ acts linearly on the representation space $V$, therefore one may also consider its action on the corresponding projective space $\mathbb{P}V$. A point of $\mathbb{P}V$ is a \emph{limit direction} of $(W,S)_B$ if it is the limit of an injective sequence of points in the orbit of some \emph{base point} $\proj\x_0\in\mathbb{P}V$. When the base point is a simple root, then the limit direction is called a \emph{limit root}. 

The notion of limit roots was introduced and studied in \cite{hohlweg_asymptotical_2014}. Properties of limit roots for infinite Coxeter systems were investigated in a series of papers: Limit roots lie on the isotropic cone of the bilinear form associated to the representation space~\cite[Theorem~2.7]{hohlweg_asymptotical_2014}. The convex cone spanned by limit roots is studied in~\cite[Theorem~5.4]{dyer_imaginary_2013} as the \emph{imaginary cone}. The relations between limit roots and the imaginary cone are further investigated in~\cite{dyer_imaginary2_2013}. For irreducible Coxeter systems, every limit root can be obtained from any root or from any limit root \cite[Theorem~3.1(b,c)]{dyer_imaginary2_2013}.

When the Coxeter group acts on a Lorentz space, every limit root can be obtained from any time-like vector~\cite[Theorem~3.3]{hohlweg_limit_2013}. This result is obtained by interpreting the Coxeter group as a Kleinian group acting on the hyperbolic space. Furthermore, every limit root can be obtained from weights~\cite[Theorem~3.4]{chen_lorentzian_2013}. Theorem~\ref{thm:lightlike} of the present paper implies that limit directions arising from light-like directions are limit roots.

Theorem \ref{thm:limitroots} below summarizes known results on base points whose orbits accumulate at limit roots under the action of the Coxeter group. These results motivate the investigation of limit directions of Coxeter groups arising from \emph{any} base point $\proj\x_0\in\mathbb{P}V$. In this paper, we initiate this examination when the representation space $V$ is a Lorentz space. This approach includes the action of Coxeter groups on hyperbolic spaces, roots and weights as special cases. Interestingly, limit directions reveal behaviors that did not get noticed previously.

Based on the classification of Lorentzian transformations according to their eigenvalues, our first main result introduces a spectral perspective for limit roots involving infinite-order elements of the group.

\begin{theorem}\label{thm:main1}
	Let $E_\spec$ be the set of directions of non-unimodular eigenvectors for infinite-order elements of a Lorentzian Coxeter system $(W,S)_B$. The set of limit roots $E_\Phi$ of $(W,S)_B$ is the closure of $E_\spec$, that is
	\[
		E_\Phi=\closure{E_\spec}.
	\]
\end{theorem}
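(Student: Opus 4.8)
\medskip
\noindent\textbf{Proof strategy.}
The plan is to pass to the Kleinian picture and then apply the structure theory of limit sets. Realize $V$ as $\mathbb R^{n-1,1}$ and $W$ as a discrete group of isometries of hyperbolic space $\mathbb H^{n-1}$: the simple roots are space-like, so $W$ lies in $\mathrm O^+(n-1,1)$ and acts properly discontinuously on $\mathbb H^{n-1}$, and the stabilizer of a time-like direction is finite. I would first use the classification of Lorentzian transformations by their eigenvalues to pin down $E_\spec$: an infinite-order element $w\in W$ is elliptic, parabolic, or hyperbolic, and it has a non-unimodular eigenvalue only in the hyperbolic case, where these eigenvalues form a real pair $\lambda>1,\ \lambda^{-1}$ whose eigendirections $\proj\x_+,\proj\x_-$ are precisely the two fixed points of $w$ on $\partial\mathbb H^{n-1}$. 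Thus $E_\spec$ is the set of boundary fixed points of the hyperbolic elements of $W$. I would also record the identification --- available for Lorentzian systems through the Kleinian interpretation used in \cite{hohlweg_limit_2013}, see also \cite{hohlweg_asymptotical_2014} --- of the set of limit roots $E_\Phi$ with the limit set $\Lambda(W)$ of $W$; in particular $E_\Phi$ is closed, $W$-invariant, lies on the light cone, and equals the set of accumulation points of any orbit $W\cdot\proj\z_0$ with $\proj\z_0$ time-like.

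For the inclusion $\closure{E_\spec}\subseteq E_\Phi$: if $w\in W$ is hyperbolic with attracting eigendirection $\proj\x_+$ and $\proj\z_0$ is a time-like base point, then north--south dynamics on $\mathbb P V$ produces the sequence $\proj\z_0,\,w\proj\z_0,\,w^2\proj\z_0,\dots$ converging to $\proj\x_+$, and it is injective because the finite group $\mathrm{Stab}_W(\proj\z_0)$ meets the infinite cyclic group $\langle w\rangle$ only in the identity. Hence $\proj\x_+\in E_\Phi$, and applying the same to $w^{-1}$ gives $\proj\x_-\in E_\Phi$. So $E_\spec\subseteq E_\Phi$, and since $E_\Phi$ is closed, $\closure{E_\spec}\subseteq E_\Phi$.

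The reverse inclusion $E_\Phi\subseteq\closure{E_\spec}$ is the crux. The set $\closure{E_\spec}$ is closed, $W$-invariant (conjugation sends hyperbolic elements to hyperbolic elements and carries their boundary fixed points along), and contained in $\Lambda(W)=E_\Phi$ by the previous step. If $W$ is infinite, the Tits alternative for Coxeter groups gives two cases. Either $W$ contains a nonabelian free subgroup, hence is non-elementary; then $\closure{E_\spec}$ is nonempty (non-elementary discrete groups contain hyperbolic elements) and $\Lambda(W)$ is the unique minimal nonempty closed $W$-invariant subset of $\partial\mathbb H^{n-1}$, so $\closure{E_\spec}=\Lambda(W)=E_\Phi$. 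Or $W$ is virtually abelian; since a higher-rank affine component would force a degenerate form, $W$ is then, up to a finite direct factor, the rank-two infinite dihedral system with $st$ of infinite order, so that $st$ is hyperbolic and $\Lambda(W)$ is exactly the two-point fixed-point set of $st$, which every hyperbolic element of $W$ shares; again $E_\spec=\Lambda(W)=E_\Phi$. (If $W$ is finite both sides are empty.)

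The main obstacle I anticipate is not any single computation but the careful assembly of the two dictionaries --- spectrum of $w$ versus its dynamics on $\partial\mathbb H^{n-1}$, and limit roots versus $\Lambda(W)$ --- together with the two classical facts invoked for non-elementary discrete groups, the minimality of the limit set and the existence of hyperbolic elements; these are standard for discrete isometry groups of hyperbolic space (see e.g.\ \cite{ratcliffe_foundations_2006}) but must be imported in exactly the form used above, and one must verify that elementary Lorentzian Coxeter systems genuinely contribute only the easy $|\Lambda(W)|\le 2$ possibilities.
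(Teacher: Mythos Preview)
Your argument is correct. You take exactly the Kleinian-group route that the paper itself acknowledges in Section~3.2 as an immediate proof (``Theorem~\ref{thm:main1} follows directly from the fact that the set of limit roots equals the limit set of the Coxeter group regarded as a Kleinian group~\cite[Theorem~1.1]{hohlweg_limit_2013}; for the relation between fixed points and limit sets, see~\cite{marden_outer_2007}''), but which the paper then deliberately sets aside in favor of a more self-contained proof. Your identification of $E_\spec$ with the fixed points of hyperbolic elements is the right reading of the definition (parabolic elements have only unimodular eigenvalues by Proposition~\ref{prop:3types}, so contribute nothing to $E_\spec$); the paper's line ``$E_\spec=E_\hyper\sqcup E_\para$'' before Theorem~\ref{thm:spectral} is a slip, though harmless since $E_\para\subseteq\closure{E_\hyper}$. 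Your handling of the elementary case is fine: a Lorentzian Coxeter system that is virtually abelian must, after shedding finite direct factors, be rank~$2$ with $c_{st}>1$, since an irreducible affine component of rank $\geq 2$ would make the form degenerate.

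The paper's own proof instead works out the explicit eigenspace dynamics of parabolic and hyperbolic elements (Theorems~\ref{thm:parabolic} and~\ref{thm:hyperbolic}) to place their light-like eigendirections in $E_\Phi$, and then proves density (Theorem~\ref{thm:spectral}) via its Theorem~\ref{thm:lightlike} and a conjugation argument, avoiding both the identification $E_\Phi=\Lambda(W)$ and the Kleinian minimality theorem. What the paper's approach buys is twofold: the detailed dynamics in Theorems~\ref{thm:parabolic}--\ref{thm:hyperbolic} are reused in the proof of Theorem~\ref{thm:main2} on space-like limit directions, and the argument is phrased so as to suggest what might survive outside the Lorentzian setting (cf.\ Section~\ref{ssec:nonLorentz}). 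What your approach buys is brevity and conceptual clarity: once $E_\Phi=\Lambda(W)$ is granted, everything reduces to standard facts about discrete isometry groups of $\mathbb H^{n-1}$.
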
 

This spectral description provides a natural way to directly compute limit roots from the Coxeter group and its geometric action through eigenspaces of infinite-order elements. We have implemented a package in Sage \cite{sage} to do such computations; the package will be made available in Sage during the Sage days 57 in Cernay \cite{sagedays}. In the previous articles \cite{hohlweg_asymptotical_2014} \cite{dyer_imaginary2_2013} \cite{hohlweg_limit_2013} \cite{chen_lorentzian_2013}, figures showed partial approximations of the limit roots using roots or weights. In contrast, Figure~\ref{fig:limitroots_rk4} presents two examples from our computations that reveal the full picture.

\begin{figure}[!htbp]
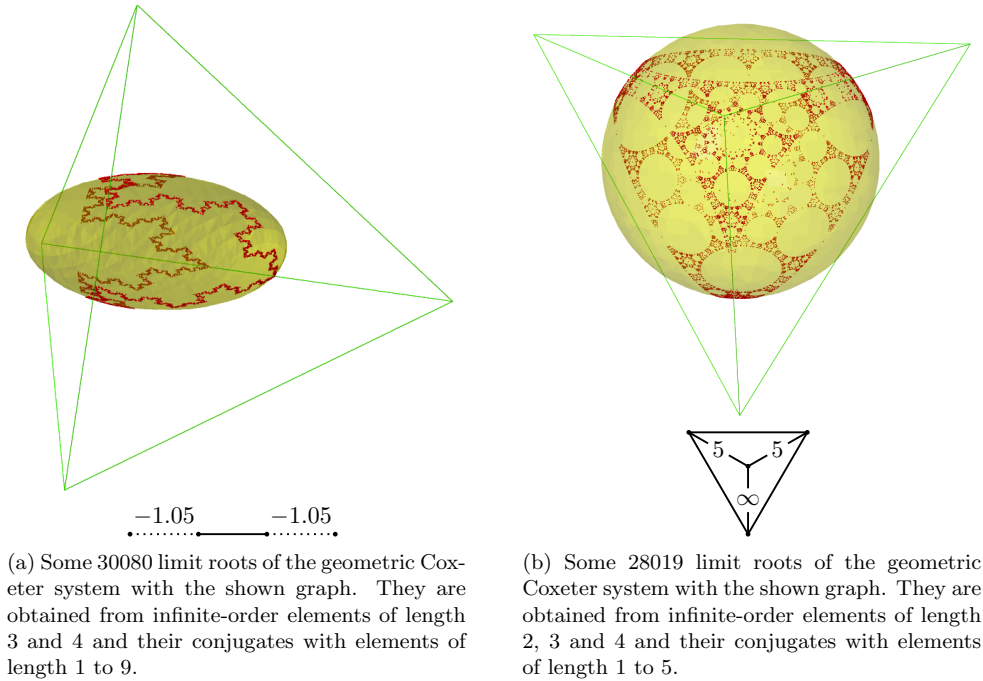

	\centering
	\scalebox{0.9}{
	\begin{subfigure}[b]{.45\textwidth}
		\centering
		\input{limroot1.tex}
		\caption{Some 30080 limit roots of the geometric Coxeter system with the shown graph. They are obtained from infinite-order elements of length 3 and 4 and their conjugates with elements of length 1 to 9.}\label{fig:lm_rk4_1}
	\end{subfigure}
	\qquad
	\begin{subfigure}[b]{.45\textwidth}
		\centering
		\input{limroot2.tex}
		\caption{Some 28019 limit roots of the geometric Coxeter system with the shown graph. They are obtained from infinite-order elements of length 2, 3 and 4 and their conjugates with elements of length 1 to 5.}\label{fig:lm_rk4_2}
		\end{subfigure}
	}\caption{Limit roots of two geometric Coxeter systems of rank 4 seen in the affine space spanned by the simple roots.}\label{fig:limitroots_rk4}
\end{figure}

In the hope of further generalizations to non-Lorentzian Coxeter systems, our proof of Theorem~\ref{thm:main1} tries to rely minimally on the hyperbolic geometric interpretation of~\cite{hohlweg_limit_2013}.  For Lorentzian Coxeter systems, the set of limit roots equals the limit set of the Coxeter group seen as a Kleinian group.  Moreover, limit roots are indeed the only light-like limit directions, as we show in Theorem \ref{thm:lightlike}, which is a key to the proof of Theorem~\ref{thm:main1}.  However, for non-Lorentzian Coxeter systems, there may be isotropic limit directions that are not limit roots, as shown in Example~\ref{expl:nonLorentz}. 

Furthermore, while proving Theorem~\ref{thm:main1}, we also observed space-like limit directions.  Our second main result describes the set of limit directions for Lorentzian Coxeter systems in terms of the projective Coxeter arrangement, i.e.\ the infinite arrangement of reflecting hyperplanes in the projective representation space.

\begin{theorem}\label{thm:main2}
	Let $E_V$ be the set of limit directions of a Lorentzian Coxeter system $(W,S)_B$ and $\L_\hyper$ be the union of codimension-$2$ space-like intersections of the projective Coxeter arrangement associated to $(W,S)_B$.  The set of limit directions $E_V$ is ``sandwiched'' between $\L_\hyper$ and its closure, that is
	\[
		\L_\hyper\subset E_V\subseteq\closure{\L_\hyper}.
	\]
\end{theorem}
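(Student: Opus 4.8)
The plan is to prove the two inclusions separately. For $\L_\hyper\subseteq E_V$, take a point $\proj z$ of $\L_\hyper$, say $\proj z\in\proj{\{\alpha,\beta\}^\perp}$ with $\alpha,\beta$ roots and $B(\alpha,\beta)<-1$, and set $U:=\{\alpha,\beta\}^\perp$; since $P:=\operatorname{span}(\alpha,\beta)$ has signature $(1,1)$, the subspace $U$ is positive definite of dimension $n-2$. The element $w:=s_\alpha s_\beta$ fixes $U$ pointwise and, being of infinite order, restricts to a hyperbolic boost on $P$ with real eigenvalues $\lambda>1>\lambda^{-1}$ and isotropic eigenvectors $v_+,v_-$. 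Choosing the base point $\proj\x_0$ with $\x_0=z+v_-$ gives $w^k\x_0=z+\lambda^{-k}v_-\to z$, an injective sequence in $\mathbb PV$ since $z$ and $v_-$ are linearly independent; hence $\proj z\in E_V$, and $\L_\hyper\subseteq E_V$.

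For $E_V\subseteq\closure{\L_\hyper}$, let $w_k\proj\x_0\to\proj z$ with $(w_k)$ injective in $W$. Since $W$ is discrete in $O(n-1,1)$ (it acts on $\H$ as a Kleinian group) and $O(n-1,1)$ is noncompact, $w_k\to\infty$; passing to a subsequence with North--South dynamics (via the Cartan decomposition $w_k=u_ka_kv_k$) yields isotropic directions $\xi,\eta$ with $w_k\to\proj\xi$ uniformly on compacta of $\mathbb PV\setminus\proj{\eta^\perp}$ and $w_k^{-1}\to\proj\eta$ uniformly on compacta of $\mathbb PV\setminus\proj{\xi^\perp}$; applying the first to a simple root outside $\proj{\eta^\perp}$ gives $\proj\xi\in E_\Phi$, and symmetrically $\proj\eta\in E_\Phi$. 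If $\proj z$ were time-like, $(w_k\proj\x_0)$ would be an injective time-like sequence accumulating inside $\H$, contradicting proper discontinuity; so $\proj z$ is light-like or space-like. If light-like, Theorem~\ref{thm:lightlike} gives $\proj z\in E_\Phi$, so by Theorem~\ref{thm:main1} it suffices to prove $E_\spec\subseteq\closure{\L_\hyper}$: given an isotropic eigendirection $\proj\xi$ of an infinite-order $w\in W$ --- necessarily with real eigenvalue $\notin\{\pm1\}$, and, after replacing $w$ by $w^2$ if needed, with a second such direction $\proj\eta$ and $w^m\to\proj\xi$ off $\proj{\eta^\perp}$ --- pick a root $\gamma$ with $\gamma\notin\xi^\perp\cup\eta^\perp$ and $\xi\notin\operatorname{span}(\gamma,\eta)$. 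Then $w^m\gamma\to\proj\xi$, and since the $B$-unit representatives of $w^m\gamma$ have Euclidean norm tending to infinity, $|B(\gamma,w^p\gamma)|\to\infty$; fixing $p$ large with $|B(\gamma,w^p\gamma)|>1$, the flat $F:=\proj{\{\gamma,w^p\gamma\}^\perp}$ lies in $\L_\hyper$ and is not contained in $\proj{\eta^\perp}$, so any $q\in F\setminus\proj{\eta^\perp}$ satisfies $w^mq\in w^mF\subseteq\L_\hyper$ with $w^mq\to\proj\xi$; hence $\proj\xi\in\closure{\L_\hyper}$, and $E_\Phi=\closure{E_\spec}\subseteq\closure{\L_\hyper}$.

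For the space-like case the key ingredient is the lemma: \emph{for distinct limit roots $\proj x\ne\proj y$ one has $\proj{\{x,y\}^\perp}\subseteq\closure{\L_\hyper}$}. To prove it, pick roots $\gamma_j\to\proj x$ and $\delta_j\to\proj y$; non-proportional isotropic vectors satisfy $B(x,y)\ne0$, so (again using that the Euclidean norms of the $B$-unit representatives blow up) the pairs $(\gamma_j,\delta_j)$ are eventually of hyperbolic type, $F_j:=\proj{\{\gamma_j,\delta_j\}^\perp}\in\L_\hyper$; moreover $\gamma_j^\perp\to x^\perp$ and $\delta_j^\perp\to y^\perp$ with $x^\perp\ne y^\perp$, so continuity of the map taking two distinct hyperplanes to their intersection gives $F_j\to\proj{\{x,y\}^\perp}$, whence every point of $\proj{\{x,y\}^\perp}$ is a limit of points of $\L_\hyper$. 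Now if $\proj z$ is space-like, then $\proj z\ne\proj\xi$ forces $\proj\x_0\in\proj{\eta^\perp}$, and analysing the action of $a_kv_k$ on $\proj{\eta^\perp}$ shows $\proj z\in\proj{\xi^\perp}$ with $\xi$ a limit root; one then wants to refine this to $\proj z\in\proj{\{\xi,\eta'\}^\perp}$ for a second isotropic limit root $\eta'\ne\xi$, so that the lemma applies and finishes the proof.

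The hard part will be precisely this refinement. Because the Cartan factors $u_k,v_k$ only converge and need not stabilize, the slow-escape behaviour of $a_kv_k\proj\x_0$ along $\proj{\eta^\perp}$ carries an a priori free scale --- the rate at which $v_k\proj\x_0$ approaches $\proj{\eta^\perp}$ relative to the boost parameter --- so $\proj z$ could a priori be any point of $\proj{\xi^\perp}$, whereas only a codimension-$2$ subflat spanned by limit roots is allowed; one must show that the extra scale is in fact controlled, equivalently that the auxiliary isotropic direction it produces is genuinely a limit root. I expect this to require more than the soft North--South dynamics --- for instance exploiting that $(w_k\proj\x_0)$ is an orbit of the \emph{discrete} reflection group $W$, not of an arbitrary convergence group. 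The remaining pieces reduce cleanly to the construction in the first part, Theorems~\ref{thm:lightlike} and~\ref{thm:main1}, and the flat-approximation lemma.
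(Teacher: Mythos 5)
Your first inclusion is correct and is essentially the paper's own argument: $\L_\hyper\subseteq\U_\hyper$, and a point $\proj z$ of the unimodular subspace of the hyperbolic element $\sigma_\alpha\sigma_\beta$ is reached by perturbing $z$ along the contracting light-like eigendirection (this is exactly Case~2 in the proof of Theorem~\ref{thm:hyperbolic}). Your flat-approximation lemma ($\proj{\{x,y\}^\perp}\subseteq\closure{\L_\hyper}$ for distinct limit roots $x\ne y$) is also sound and is essentially the hyperbolic half of the paper's Theorem~\ref{thm:UinL}, and your treatment of the time-like and light-like cases of the reverse inclusion goes through: a time-like limit forces a time-like base point and contradicts proper discontinuity on hyperbolic space, and the light-like case reduces via Theorem~\ref{thm:lightlike} to $E_\Phi\subseteq\closure{\L_\hyper}$, which your pushed-flat construction handles.

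The genuine gap is exactly where you flag it: the space-like case of $E_V\subseteq\closure{\L_\hyper}$, which is the crux of the theorem. Soft North--South dynamics extracted from the Cartan decomposition only localizes a space-like limit direction $\proj z$ in the codimension-$1$ degenerate hyperplane $\proj{\xi^\perp}$, whereas you need $\proj z$ to be approximable by codimension-$2$ space-like flats; the ``free scale'' you identify (the rate at which $v_k\cdot\proj\x_0$ approaches the repelling hyperplane relative to the boost parameter) is genuinely uncontrolled by convergence-group arguments alone, and no proof of the refinement is offered. Note also that the refinement you aim for --- $\proj z\in\proj{\{\xi,\eta'\}^\perp}$ for two distinct limit roots --- is close in spirit to one direction of the paper's open problem $E_V=\U_\hyper\sqcup E_\Phi$, so it should not be expected to drop out of generalities. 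The paper sidesteps the quantitative issue entirely by a combinatorial argument: Selberg's lemma together with Lemma~\ref{lem:finitind} reduce to sequences of infinite-order elements; then, working with the cell $C$ of the projective Coxeter arrangement containing $\proj z$, one shows that if $C$ meets no unimodular subspace $\proj U_w$ then $w\cdot C\cap C=\varnothing$ for every infinite-order $w$ --- an invariant cell would be forced, via the dynamics of Theorems~\ref{thm:parabolic} and~\ref{thm:hyperbolic}, to have a light-like eigendirection of $w$ as its unique vertex outside $\proj U_w$ and an affine stabilizer, hence to lie in $\closure{\L_\hyper}$ after all, a contradiction. You would need to import this cell/stabilizer analysis, or some equally structural use of the discreteness of $W$, to close the space-like case.
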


\begin{figure}[!htbp]
  \includegraphics{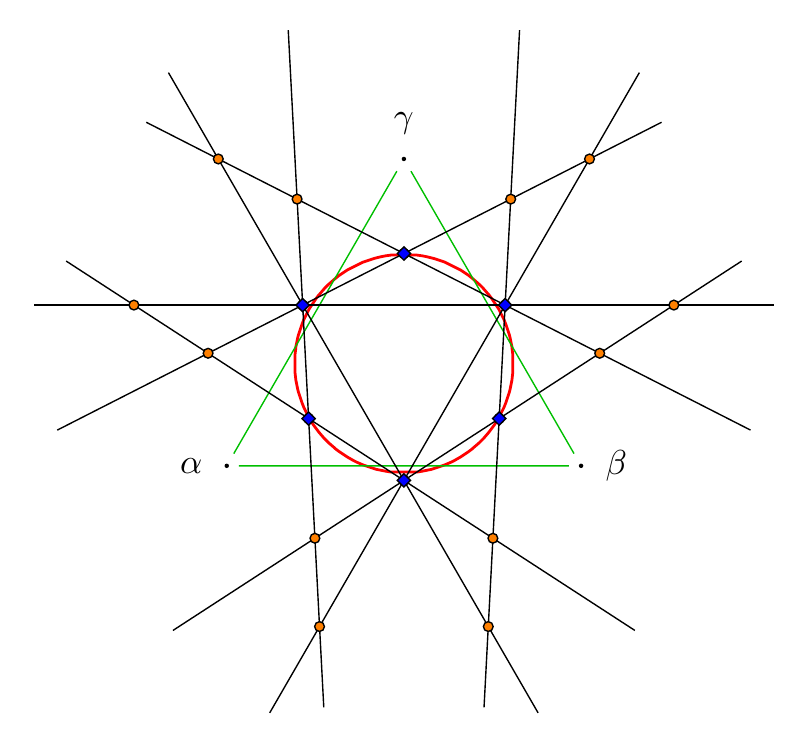}
  \caption{
    Some reflecting hyperplanes in the projective Coxeter arrangement of the universal Coxeter group of rank $3$.  The associated bilinear form has $c_{ij}=1.1$ whenever $i\ne j$. Codimension-$2$ space-like intersections are marked with dots.  By Theorem~\ref{thm:main2}, the intersections in $\L_\hyper$ are limit directions.  The six intersections marked with diamonds are weights.  
  }\label{fig:intersection}
\end{figure}

The hyperplane arrangement $\L_\hyper$ involved in Theorem~\ref{thm:main2} is infinite and not discrete.  Figure~\ref{fig:intersection} shows part of the Coxeter arrangement of a universal Coxeter group and some intersections in $\L_\hyper$. 

While Theorem~\ref{thm:main2} has a combinatorial flavour, a stronger relation holds. Let $\U_\hyper$ be the set of space-like unimodular eigendirections of infinite-order elements. Then the set of limit directions satisfies \[\U_\hyper\sqcup E_\Phi\subseteq E_V\subseteq\closure{\U_\hyper}.\] Notably, space-like weights are all limit directions, as marked by diamonds in Figure~\ref{fig:intersection} and \ref{fig:limitdirection}.  Moreover, roots may also be limit directions.  In Section~\ref{ssec:problem}, we discuss the possibility of equalities on either side, and point out certain unexplained linear dependencies among limit directions in $\U_\hyper$, see Figure~\ref{fig:limitdirection}.

After we obtained the main results, we noticed potential relations between limit roots and two conjectures: a conjecture of Lam and Pylyavskyy, stating that the \emph{limit weak order} is a lattice~\cite[Conjecture~10.3]{lam_total_2013}, and a conjecture of Dyer, stating that the \emph{extended weak order} is a complete ortholattice~\cite[Conjecture~2.5]{dyer_weak_2011}. The limit weak order extends the usual weak order to infinite reduced words, whereas the extended weak order extends the usual weak order to infinite biclosed sets.  In Section~\ref{ssec:infword}, we relate limit roots to the study of infinite reduced words and infinite biclosed sets for Lorentzian Coxeter systems. The relations between limit weak order and extended weak order in general deserve to be explored in more detail.

The present paper is organized as follows. In Section~\ref{sec:Coxeter}, we recall the geometric representations of Coxeter systems, define the notion of limit directions, and review some results on limit roots. Then we prove that limit roots are the only light-like limit directions for Lorentzian Coxeter systems, and study the relation between infinite reduced words and limit roots.  In Section~\ref{sec:Spectral}, we recall spectral properties of Lorentz transformations, and prove Theorem \ref{thm:main1}. In the last part of Section~\ref{sec:Spectral}, we give an example of non-Lorentzian Coxeter system, for which some isotropic limit directions are not limit roots. In Section~\ref{sec:Arrangement}, we define and study the projective Coxeter arrangement, and prove Theorem \ref{thm:main2}. Finally, some open problems are discussed.

\subsection*{Acknowledgement} 
The authors would like to thank Christophe Hohlweg, Ivan Izmestiev and Vivien Ripoll for helpful discussions. The second author is grateful to Christophe Hohlweg for introducing him to infinite root systems during Summer 2010 and to S\'ebastien Labb\'e and Vivien Ripoll for their help in the implementation of the Sage package.

%
%

\section{Limit roots of Lorentzian Coxeter systems}\label{sec:Coxeter}

\subsection{Lorentzian Coxeter systems}
A $n$-dimensional \Dfn{Lorentz space} $(V,\B)$ is a vector space $V$ associated with a bilinear form $\B$ of signature $(n-1,1)$. In a Lorentz space, a vector $\x$ is \Dfn{space-like} (resp.~\Dfn{time-like, light-like}) if $\B(\x,\x)$ is positive (resp.~negative, zero). The set of light-like vectors $Q=\{\x\in V\mid\B(\x,\x)=0\}$ forms a cone called the \Dfn{light cone}. The following proposition characterizes the totally-isotropic subspaces of Lorentz spaces.  

\begin{proposition}[{\cite[Theorem~2.3]{cecil_lie_2008}}]
  \label{prop:twolights}
  Let $(V,\B)$ be a Lorentz space and $\x,\y\in Q$ be two light-like vectors.  Then $\B(\x,\y)=0$ if and only if $\x=c\y$ for some $c\in\mathbb{R}$.
\end{proposition}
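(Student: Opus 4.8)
The plan is to dispatch the easy implication directly and to prove the substantive one by a short coordinate computation, the point being that two mutually orthogonal light-like vectors would span a totally isotropic plane, which the signature $(n-1,1)$ forbids.

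First, the implication ``$\x=c\y\implies\B(\x,\y)=0$'' is immediate, since $\B(c\y,\y)=c\,\B(\y,\y)=0$ because $\y\in Q$. For the converse I would assume $\B(\x,\y)=0$ with $\x,\y\in Q$; if either vector is $0$ we are done, so suppose both are nonzero. Fix a basis of $V$ in which $\B$ is diagonalized according to its signature, so that $\B(\mathbf u,\mathbf v)=u_1v_1+\dots+u_{n-1}v_{n-1}-u_nv_n$. Writing $\x=(x_1,\dots,x_n)$, the condition $\B(\x,\x)=0$ reads $\sum_{i<n}x_i^2=x_n^2$; in particular, if $x_n=0$ then all $x_i=0$, contradicting $\x\neq 0$. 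Hence $x_n\neq 0$, and likewise $y_n\neq 0$.

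Now set $c:=x_n/y_n$ and $\z:=\x-c\,\y$. By construction the last coordinate of $\z$ vanishes, so on one hand $\B(\z,\z)=\sum_{i<n}z_i^2\geq 0$, with equality only if $\z=0$. On the other hand, expanding and using $\B(\x,\x)=\B(\y,\y)=\B(\x,\y)=0$ gives $\B(\z,\z)=\B(\x,\x)-2c\,\B(\x,\y)+c^2\B(\y,\y)=0$. Therefore $\sum_{i<n}z_i^2=0$, which forces $\z=0$, i.e. $\x=c\,\y$, as desired. (Equivalently, one could observe that the linear span of $\x$ and $\y$ is totally isotropic, since $\B(a\x+b\y,a\x+b\y)=2ab\,\B(\x,\y)=0$, and invoke the fact that a nondegenerate form of signature $(n-1,1)$ has Witt index $1$; but the coordinate computation is self-contained and avoids quoting that fact.)

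There is no real obstacle here: the only step requiring a moment's care is the remark that a light-like vector whose ``time coordinate'' $x_n$ vanishes must itself be zero --- this is what legitimizes dividing by $y_n$ --- together with correctly reading off the sign of $\sum_{i<n}z_i^2$ from the signature hypothesis. Everything else is a one-line expansion of $\B(\z,\z)$.
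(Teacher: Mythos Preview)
Your proof is correct. The paper does not actually prove this proposition; it simply cites it as \cite[Theorem~2.3]{cecil_lie_2008} and moves on. Your argument is a clean, self-contained replacement for that citation: diagonalize the form, observe that a nonzero light-like vector must have nonzero time coordinate, subtract a suitable multiple to kill that coordinate, and read off positive semi-definiteness on the space-like part. The parenthetical remark about Witt index~$1$ is the abstract version of the same idea and is also fine, but the coordinate computation is more elementary and avoids invoking Witt's theory. Nothing to fix.
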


A linear transformation on $V$ that preserves the bilinear form $\B$ is called a \Dfn{Lorentz transformation}. The group of Lorentz transformations is called \Dfn{Lorentz group} and noted $O_{\B}(V)$.

Let $(W,S)$ be a finitely generated \Dfn{Coxeter system}, where $S$ is a finite set of generators and the \Dfn{Coxeter group}~$W$ is generated with the relations $(st)^{m_{st}}=e$, where $s,t\in S$, $m_{ss}=1$ and $m_{st}=m_{ts}\geq 2$ or $=\infty$ if $s\neq t$. The cardinality $|S|=n$ is the \Dfn{rank} of the Coxeter system~$(W,S)$. For an element $w\in W$, the \Dfn{length}~$\ell(w)$ of $w$ is the smallest natural number $k$ such that $w=s_1s_2\dots s_k$ for $s_i\in S$.  We refer the readers to \cite{bourbaki_elements_1968,humphreys_reflection_1992} for more details.  We associate a matrix $B$ to $(W,S)$ as follows: \[B_{st}= \begin{cases} -\cos(\pi/m_{st}) & \text{if}\quad m_{st}<\infty;\\ -c_{st} & \text{if}\quad m_{st}=\infty,\\ \end{cases} \] for $s,t\in S$, where $c_{st}$ are chosen arbitrarily with $c_{st}=c_{ts}\geq 1$. The Coxeter system $(W,S)$ associated with the matrix $B$ is called a \Dfn{geometric Coxeter system} and is noted by $(W,S)_B$. 

Let $V$ be a real vector space of dimension $n$, equipped with a basis $\Delta=\{\alpha_s\}_{s\in S}$. The matrix~$B$ defines a bilinear form $\B$ on $V$ by $\B(\alpha_s,\alpha_t)=\alpha_s^\intercal B\alpha_t$, for $s,t\in S$.  For a vector $\alpha\in V$ such that $\B(\alpha,\alpha)\neq 0$, we define the reflection $\sigma_\alpha$
\begin{equation}\label{eqn:Bref}
  \sigma_\alpha(\x):=\x-2\frac{\B(\x,\alpha)}{\B(\alpha,\alpha)}\alpha\quad\text{for all }\x\in V.
\end{equation}
The homomorphism $\rho:W\to\mathrm{GL}(V)$ sending $s$ to $\sigma_{\alpha_s}$ is a faithful geometric representation of the Coxeter group $W$ as a discrete group of Lorentz transformations. We refer the readers to \cite[Chapter~1]{krammer_conjugacy_2009} and \cite[Section~1]{hohlweg_asymptotical_2014} for more details. In the following, we will write $w(x)$ in place of $\rho(w)(x)$. 

A geometric Coxeter system $(W,S)_B$ is of \Dfn{finite type} if $B$ is positive-definite. In this case, $W$ is a finite group. A geometric Coxeter system~$(W,S)_B$ is of \Dfn{affine type} if the matrix~$B$ is positive semi-definite but not definite. In either case, the group $W$ can be represented as a reflection group in Euclidean space. If the matrix $B$ has signature $(n-1,1)$, the pair $(V,\B)$ is a $n$-dimensional Lorentz space, and $W$ acts on $V$ as a discrete subgroup of the Lorentz group $O_{\B}(V)$.  In this case, we say that the geometric Coxeter system $(W,S)_B$ is \Dfn{Lorentzian} and, by abuse of language, that $W$ is a \Dfn{Lorentzian Coxeter group}.  See~\cite[Remark~2.2]{chen_lorentzian_2013} for further discussions about terminologies.

In the spirit of~\cite{chen_lorentzian_2013}, we pass to the \Dfn{projective space} $\mathbb{P}V$, i.e.\ the topological space of $1$-dimensional subspaces of~$V$. For a non-zero vector $\x\in V\setminus\{0\}$, let $\proj\x\in\mathbb{P}V$ denote the line passing through $\x$ and the origin.  The group action of~$W$ on $V$ by reflection induces a \Dfn{projective action} of~$W$ on $\mathbb{P}V$ as $w\cdot\proj\x=\proj{w(\x)}$, for $w\in W$ and $\x\in V$. For a set $X\subset V$, the corresponding projective set is $\proj X:=\{\proj\x \in \mathbb{P}V\mid\x\in X\}$.  The \Dfn{projective light cone} is denoted by $\proj Q$.

Let~$h(\x)$ denote the sum of the coordinates of $\x$ in the basis $\Delta$, and call it the \Dfn{height} of the vector~$\x$. We say that $\x$ is \Dfn{future-directed} (resp. \Dfn{past-directed}) if $h(\x)$ is positive (resp.  negative). The hyperplane $\{\x\in V\mid h(\x)=1\}$ is the affine subspace $\affine(\Delta)$ spanned by the basis of $V$. It is useful to identify the projective space $\mathbb{P}V$ with the affine subspace $\affine(\Delta)$ with a \Dfn{projective hyperplane added at infinity}. For a vector $\x\in V$, if $h(\x)\ne 0$, $\proj\x$ is identified with the vector \[ \x/h(\x)\in\affine(\Delta).  \] Otherwise, if $h(\x)=0$, the direction $\proj\x$ is identified to a point on the projective hyperplane at infinity. For a basis vector $\alpha\in\Delta$, we have $\proj\alpha=\alpha$. In fact, if $h(\x)\ne 0$, $\proj\x$ is identified with the intersection of $\affine(\Delta)$ with the straight line passing through $\x$ and the origin.  The projective light cone~$\proj Q$ is projectively equivalent to a sphere, see for instance~\cite[Proposition~4.13]{dyer_imaginary2_2013}. The affine subspace~$\affine(\Delta)$ is practical for visualization and geometric intuitions.

Given a topological space $X$ and a subset $Y\subseteq X$, a point $x\in X$ is an \Dfn{accumulation point} of~$Y$ if every neighborhood of $x$ contains a point of $Y$ different from $x$. Let $G$ be a group acting on~$X$, then a point $x\in X$ is a \Dfn{limit point} of $G$ if $x$ is an accumulation point of the orbit $G(x_0)$ for some \Dfn{base point} $x_0\in X$. Alternatively, $x$ is a limit point of $G$, if there is a base point $x_0\in X$ and a sequence of elements $(g_k)_{k\in\mathbb{N}}\in G$ such that the sequence of points $(g_k(x_0))_{k\in\mathbb{N}}$ is injective and converges to $x$ as $k\to\infty$. In this case, we say that $x$ is a limit point of $G$ acting on $X$ arising from the base point $x_0$ through the sequence $(g_k)_{k\in\mathbb{N}}$.  We now define the main object of study of the present paper.

\begin{definition}
  \Dfn{Limit directions} of a geometric Coxeter system $(W,S)_B$ are limit points of $W$ acting on $\mathbb{P}V$. The set of limit directions is denoted by $E_V$. In other words,
  \begin{align*}
    E_V=\{\proj{\x}\in \mathbb{P}V \mid&\text{ there is a vector } \x_0\in V \text{ and an injective sequence } (w_i\cdot\proj\x_0)_{i\in\mathbb{N}}\\
    &\text{ in the orbit } W\cdot\proj\x_0 \text{ such that } \lim_{i\rightarrow \infty}  w_i\cdot\proj\x_0=\proj\x\}.
  \end{align*}
\end{definition}

\subsection{Limit roots}

We call the basis vectors in $\Delta$ the \Dfn{simple roots}. Let $\Phi=W(\Delta)$ be the orbit of $\Delta$ under the action of~$W$, then the vectors in $\Phi$ are called \Dfn{roots}. The pair $(\Phi,\Delta)$ is a \Dfn{based root system}. The roots~$\Phi$ are partitioned into \Dfn{positive roots} $\Phi^+=\cone(\Delta)\cap\Phi$ and \Dfn{negative roots} $\Phi^-=-\Phi^+$. The \Dfn{depth} of a positive root $\gamma\in\Phi^+$ is the smallest integer $k$ such that $\gamma=s_1s_2\dots s_{k-1}(\alpha)$, for $s_i\in S$ and $\alpha\in\Delta$. 

Let $V^*$ be the dual vector space of $V$ with dual basis $\Delta^*$. If the bilinear form $\B$ is non-singular, which is the case for Lorentz spaces, $V^*$ can be identified with $V$, and $\Delta^*=\{\omega_s\}_{s\in S}$ can be identified with a set of vectors in $V$ such that \begin{equation}\label{eqn:dualbasis} \B(\alpha_s,\omega_t)=\delta_{st}, \end{equation} where $\delta_{st}$ is the Kronecker delta function. Vectors in $\Delta^*$ are called \Dfn{fundamental weights}, and vectors in the orbit
\[
  \Omega:=W(\Delta^*)=\bigcup_{\omega\in\Delta^*}W(\omega)
\]
are called \Dfn{weights}. See \cite[Chapter~VI, Section~1.10]{bourbaki_elements_1968} and \cite[Section~1]{maxwell_sphere_1982} for more details about the notion of weights.

Limit roots were introduced in \cite{hohlweg_asymptotical_2014} to study infinite root systems arising from infinite Coxeter groups. They are the accumulation points of the projective roots $\proj\Phi$ in $\mathbb{P}V$. In other words, the set of limit roots is defined as follows
\[
  E_\Phi=\{\proj{\x}\in \mathbb{P}V \mid \text{ there is an injective sequence } (\gamma_i)_{i\in\mathbb{N}} \in \Phi \text{ such that } \lim_{i\rightarrow \infty} \proj \gamma_i=\proj{\x}\}.
\]
Dyer also studied limit roots as the boundary of the imaginary cone, see~\cite[Theorem~5.4]{dyer_imaginary_2013}. Limit roots are on the \Dfn{isotropic cone} $\{\proj\x\in\mathbb{P}V\mid\B(\x,\x)=0\}$, see~\cite[Theorem~2.7(ii)]{hohlweg_asymptotical_2014}. Limits roots are also limit directions arising from different base points. The following theorem, schematized in Figure~\ref{fig:schema_roots}, summarizes the known results. 

\begin{theorem}\label{thm:limitroots}
  Limit roots of a geometric Coxeter system $(W,S)_B$ are limit directions arising from
  \begin{enumerate}
      \renewcommand{\labelenumi}{(\roman{enumi})}
      \renewcommand{\theenumi}{\labelenumi}
    \item simple roots, see \cite[Definition~2.12]{hohlweg_asymptotical_2014},
    \item limit roots, see \cite[Theorem~3.1(b)]{dyer_imaginary2_2013},
    \item projective roots, see \cite[Theorem~3.1(c)]{dyer_imaginary2_2013}.
  \end{enumerate}
  Moreover, if $(W,S)_B$ is Lorentzian, limit roots are limit directions arising from
  \begin{enumerate}
      \renewcommand{\labelenumi}{(\roman{enumi})}
      \renewcommand{\theenumi}{\labelenumi}
      \setcounter{enumi}{3}
    \item time-like directions, see \cite[Theorem~3.3]{hohlweg_limit_2013},
    \item projective weights, see \cite[Theorem~3.4]{chen_lorentzian_2013},
    \item light-like directions, by Theorem~\ref{thm:lightlike} of this paper.
  \end{enumerate}
\end{theorem}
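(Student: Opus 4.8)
The assertions (i)--(v) are precisely the results cited beside them, so the only work is (vi). One half of (vi) is quick: by \cite[Theorem~2.7(ii)]{hohlweg_asymptotical_2014} every limit root lies on the isotropic cone, i.e.\ is a light-like direction, and then (ii) of the theorem (that is, \cite[Theorem~3.1(b)]{dyer_imaginary2_2013}) exhibits each limit root as a limit direction arising from another limit root, which is a light-like base point. The reason (vi) is attached to Theorem~\ref{thm:lightlike} is the converse, substantial, content: a light-like limit direction is \emph{necessarily} a limit root, so that in the light-like regime the list (i)--(vi) is exhaustive. My plan is therefore to prove Theorem~\ref{thm:lightlike}, which I take in the form $E_V\cap\proj Q=E_\Phi$; (vi) then follows, and together with (i)--(v) this gives Theorem~\ref{thm:limitroots}.

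The inclusion $E_\Phi\subseteq E_V\cap\proj Q$ is immediate, since limit roots are accumulation points of the single orbit $W\cdot\proj\Delta$ and lie on $\proj Q$ by \cite[Theorem~2.7(ii)]{hohlweg_asymptotical_2014}. For the reverse inclusion I would take a light-like $\proj\x\in E_V$, fix an injective orbit sequence $w_i\cdot\proj\x_0\to\proj\x$, and argue by cases on the type of the base vector $\x_0$. If $\x_0$ is time-like, $\proj\x$ is an accumulation point of the $W$-orbit of a point of hyperbolic space and hence lies in the limit set of $W$, which equals $E_\Phi$ in the Lorentzian case (this is the direction complementary to \cite[Theorem~3.3]{hohlweg_limit_2013}). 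If $\x_0$ is light-like, then either $\proj\x_0\in E_\Phi$, whence $\proj\x\in\closure{W\cdot\proj\x_0}\subseteq E_\Phi$ because $E_\Phi$ is closed and $W$-invariant, or $\proj\x_0$ lies in the domain of discontinuity of $W$ on $\proj Q$, where the action is properly discontinuous and again the orbit accumulates only on $E_\Phi$. The substantial case is $\x_0$ space-like, which I would reduce to the time-like case: after normalising $\B(\x_0,\x_0)=1$, the complement $\x_0^\perp$ has signature $(n-2,1)$, so I may fix a future-directed time-like $\p\in\x_0^\perp$; choosing, along a subsequence, unit-Euclidean-norm representatives of $w_i(\x_0)$ converging to $\x$ and of $w_i(\p)$ converging to some $\mathbf u$, the relation $\B(w_i(\p),w_i(\x_0))=\B(\p,\x_0)=0$ passes to $\B(\mathbf u,\x)=0$, while $\B(w_i(\p),w_i(\p))=\B(\p,\p)<0$ forces $\B(\mathbf u,\mathbf u)\le 0$; a time-like $\mathbf u$ is impossible, for then $\mathbf u^\perp$ would be positive definite and could not contain the nonzero null vector $\x$, so $\mathbf u$ is null and Proposition~\ref{prop:twolights} forces $\mathbf u=c\x$, i.e.\ $w_i\cdot\proj\p\to\proj\x$. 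Since the stabiliser of the time-like direction $\proj\p$ in $W$ is finite, a subsequence of $(w_i\cdot\proj\p)$ is injective, so $\proj\x$ is a limit direction from a time-like base point and the time-like case applies.

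The step I expect to be the main obstacle is this space-like reduction, together with carrying out the whole argument while leaning on the hyperbolic/Kleinian picture only as a black box, as the paper wishes: one must be careful about the choice and normalisation of representatives in $\mathbb{P}V$, about the injectivity of the subsequences extracted at each stage, and about finiteness of stabilisers of the auxiliary non-light-like directions used. Granting $E_V\cap\proj Q=E_\Phi$, item (vi) follows from the remark in the first paragraph, and with (i)--(v) this completes Theorem~\ref{thm:limitroots}.
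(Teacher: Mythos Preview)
Your reduction of (vi) to Theorem~\ref{thm:lightlike} matches the paper exactly, and your argument for $E_V\cap\proj Q=E_\Phi$ is correct, but it follows a genuinely different route. You split by the type of $\x_0$ and lean on the Kleinian picture throughout: the time-like case quotes the identification of $E_\Phi$ with the Kleinian limit set, the light-like case invokes the domain of discontinuity on $\proj Q$, and the space-like case is reduced to the time-like one by pairing $\x_0$ with a time-like $\p\in\x_0^\perp$ and using Proposition~\ref{prop:twolights} to force $w_i\cdot\proj\p\to\proj\x$. The paper instead handles all non-light-like base points in a single self-contained argument, avoiding Kleinian input: it runs the same sequence $(w_k)$ on a simple root $\alpha$, extracts a limit root $\proj\beta$ by compactness, and uses $\B(w_k(\alpha),w_k(\x_0))=\B(\alpha,\x_0)$ together with $h(w_k(\x_0))\to\infty$ to obtain $\B(\proj\x,\proj\beta)=0$, whence $\proj\x=\proj\beta$ by Proposition~\ref{prop:twolights}; only the light-like base point is treated separately, by decomposing $\x_0$ as a time-like plus a space-like vector. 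The paper's approach buys two things: it honours the stated goal of minimising reliance on hyperbolic geometry, and it yields Corollary~\ref{cor:samelimits} (limits through the same sequence are independent of the base point), which is used later in the proofs of Theorem~\ref{thm:inv_set} and Theorem~\ref{thm:spectral}. Your approach is cleaner if one is willing to import the Kleinian results wholesale, and your space-like reduction is a pleasant dual of the paper's trick---you compare with an auxiliary time-like vector where the paper compares with an auxiliary root.
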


\begin{figure}[!htbp]
  \includegraphics{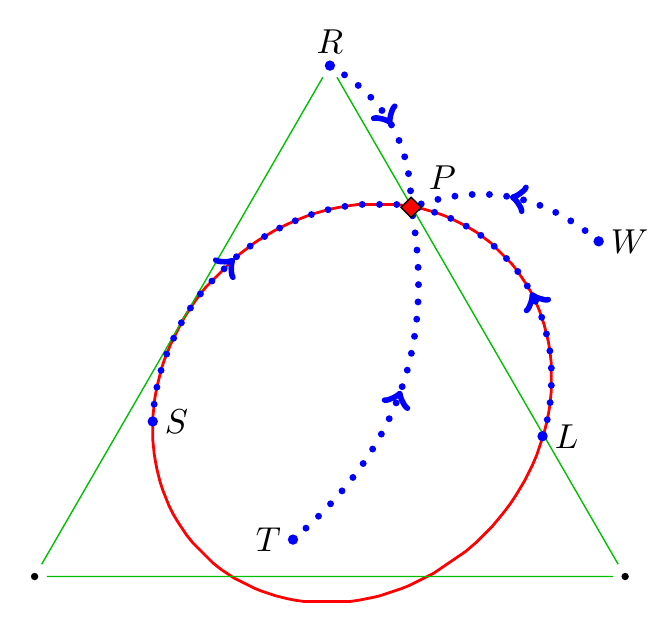}
  \caption{
    Schematic picture showing a limit root $P$ arising from different base points: time-like direction (T), light-like direction (S), projective weight (W), projective root (R) or another limit root (L). The elliptic-shape is the projective light cone, and the triangle is the convex hull of the projective simple roots.
  }\label{fig:schema_roots}
\end{figure}

\begin{remark}
  In this paper, we require the simple roots to be a basis for a based root system. However, except for the concept of weights, all the arguments and results of this paper work in the more general setting of \cite{hohlweg_asymptotical_2014,dyer_imaginary_2013,dyer_imaginary2_2013}, where the simple roots only needs to be positively independent but not necessarily linearly independent.
\end{remark}

\subsection{Light-like limit directions}\label{ssec:lightlike}

By Theorem~\ref{thm:limitroots}, the set of limit roots $E_\Phi$ is contained in~$E_V$.  The following theorem states that limit roots are the only light-like limit directions of Lorentzian Coxeter groups.

\begin{theorem}\label{thm:lightlike}
  For a Lorentzian Coxeter system $(W,S)_B$, consider a sequence $(w_k)_{k\in\mathbb{N}}\in W$ and a base point $\x_0\in V$. If $(w_k\cdot\proj\x_0)_{k\in\mathbb{N}}$ is injective and converges to a limit direction $\proj\x$ in the projective light cone $\proj Q$, then $\proj\x$ is a limit root.
\end{theorem}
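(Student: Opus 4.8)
The plan is to prove that $\proj\x$ is an accumulation point of the projective root system $\proj\Phi$, hence a limit root, by reading it off the north--south dynamics of $(w_k)$ on $\mathbb{P}V$. Since $\rho$ is faithful and the points $w_k\cdot\proj\x_0$ are pairwise distinct, so are the $w_k$; as $\rho(W)$ is discrete in $O_\B(V)$, the sequence $\rho(w_k)$ leaves every compact subset of $O_\B(V)$. Fix a Cartan ($KAK$) decomposition of $O_\B(V)\cong O(n-1,1)$, with $K$ the compact stabiliser of a time-like direction and, in a Witt basis $e_1,\dots,e_n$ (with $e_1,e_n$ light-like, $\B(e_1,e_n)=1$, and $e_2,\dots,e_{n-1}$ spanning a positive-definite complement), the split torus $A=\{a_t\}$ acting by $e_1\mapsto e^{t}e_1$, $e_n\mapsto e^{-t}e_n$ and fixing $\mathrm{span}(e_2,\dots,e_{n-1})$. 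Write $w_k=\kappa_k\,a_{t_k}\,\kappa_k'$ with $t_k\geq 0$; after passing to a subsequence, $\kappa_k\to\kappa$, $\kappa_k'\to\kappa'$ in $K$, and $t_k\to\infty$ (the $A$-part must escape since $\rho(w_k)$ does). Put $\mathbf a^{+}:=\kappa e_1$ and $\mathbf a^{-}:=(\kappa')^{-1}e_n$; both are light-like, and a standard estimate gives $w_k\cdot\proj\y\to\proj{\mathbf a}^{+}$ for every $\proj\y$ outside the hyperplane $(\mathbf a^{-})^{\perp}=\{\proj\z:\B(\z,\mathbf a^{-})=0\}$.

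I would next observe that $\proj{\mathbf a}^{+}$ is a limit root, and symmetrically so is $\proj{\mathbf a}^{-}$. Indeed, $\Delta$ is a basis of $V$, so some simple root $\alpha_s$ lies off $(\mathbf a^{-})^{\perp}$; then $(w_k(\alpha_s))_{k}$ is a sequence of roots whose directions converge to $\proj{\mathbf a}^{+}\in\proj Q$, and since roots are space-like no term equals the limit, so this sequence takes infinitely many values and $\proj{\mathbf a}^{+}\in E_\Phi$. Applying the same argument to $(w_k^{-1})$ (with a simple root off $(\mathbf a^{+})^{\perp}$) yields $\proj{\mathbf a}^{-}\in E_\Phi$.

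To conclude I would split on the position of the base point relative to $(\mathbf a^{-})^{\perp}$. If $\proj\x_0\notin(\mathbf a^{-})^{\perp}$, then $w_k\cdot\proj\x_0\to\proj{\mathbf a}^{+}$, so $\proj\x=\proj{\mathbf a}^{+}\in E_\Phi$. If $\proj\x_0\in(\mathbf a^{-})^{\perp}$, this hyperplane is degenerate and by Proposition~\ref{prop:twolights} its only light-like direction is $\proj{\mathbf a}^{-}$; hence either $\x_0$ is a multiple of $\mathbf a^{-}$, so $\proj\x_0=\proj{\mathbf a}^{-}\in E_\Phi$ and then $\proj\x\in\closure{W\cdot\proj\x_0}\subseteq E_\Phi$ because $E_\Phi$ is closed (it is a set of accumulation points) and $W$-invariant ($W$ permutes $\proj\Phi$); or $\x_0$ is space-like, in which case I would write $\kappa_k'\x_0=z_k^{1}e_1+\mathbf v_k+z_k^{n}e_n$, where $z_k^{1}\to 0$ because $\proj\x_0\in(\mathbf a^{-})^{\perp}$, and inspect $a_{t_k}\kappa_k'\x_0=e^{t_k}z_k^{1}e_1+\mathbf v_k+e^{-t_k}z_k^{n}e_n$. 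One then finds that either the $e_1$-term dominates, forcing $\proj\x=\kappa e_1=\proj{\mathbf a}^{+}$, or $a_{t_k}\kappa_k'\x_0\to c\,e_1+\mathbf v$ in $V$ with $\B(c\,e_1+\mathbf v,c\,e_1+\mathbf v)=\B(\mathbf v,\mathbf v)\geq 0$, and then the hypothesis $\proj\x\in\proj Q$ forces $\mathbf v=0$ and again $\proj\x=\kappa e_1=\proj{\mathbf a}^{+}\in E_\Phi$ (the remaining option $c\,e_1+\mathbf v=0$ would make $\x_0$ light-like, contrary to assumption). In every case $\proj\x\in E_\Phi$.

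The main obstacle is this degenerate case $\proj\x_0\in(\mathbf a^{-})^{\perp}$, where the first-order north--south dynamics is uninformative and one must descend to the next order; it is exactly the hypothesis that the limit lies on the light cone — rather than being merely a limit direction — that pins it back down to the attracting point $\proj{\mathbf a}^{+}$. A minor bookkeeping point is that $\mathbf a^{\pm}$ depend on the chosen subsequence while membership in $E_\Phi$ does not, so the repeated passage to subsequences is harmless; alternatively, once $\proj\x=\proj{\mathbf a}^{+}$ one may finish by noting that $\proj{\mathbf a}^{+}$ also arises from any time-like direction (which automatically lies off $(\mathbf a^{-})^{\perp}$) and invoking \cite[Theorem~3.3]{hohlweg_limit_2013}, but the self-contained route above keeps the hyperbolic input to a minimum.
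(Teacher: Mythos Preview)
Your argument is correct and takes a genuinely different route from the paper's. The paper works directly with the bilinear form: for a non-light-like base point it picks a simple root $\alpha$, extracts a convergent subsequence $w_k\cdot\proj\alpha\to\proj\beta\in E_\Phi$, and then observes that
\[
\B(\proj\x,\proj\beta)=\lim_{k\to\infty}\frac{\B(\x_0,\alpha)}{h(w_k(\x_0))\,h(w_k(\alpha))}=0,
\]
so $\proj\x=\proj\beta$ by Proposition~\ref{prop:twolights}. For a light-like base point it decomposes $\x_0$ as a time-like plus a space-like vector and invokes \cite[Theorem~3.3]{hohlweg_limit_2013} on the time-like part. Your proof instead passes through the Cartan decomposition of $O(n-1,1)$ and reads off the attracting/repelling directions $\proj{\mathbf a}^{\pm}$ from the escaping $A$-part, then handles the degenerate hyperplane $(\mathbf a^{-})^{\perp}$ by a second-order analysis.

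What each buys: the paper's argument is lighter on machinery (no $KAK$), and its first paragraph gives for free the useful by-product that any two light-like limits through the same sequence coincide (Corollary~\ref{cor:samelimits}); on the other hand it appeals to an external theorem for the light-like case. Your approach is more uniform --- all base points are treated by the same dynamical picture --- and is self-contained once the Cartan decomposition is granted; it also makes transparent that $\proj{\mathbf a}^{-}$ is itself a limit root, which you use neatly when $\proj\x_0=\proj{\mathbf a}^{-}$. Both arguments ultimately rest on the same rank-one fact (a light-like hyperplane contains a unique light-like direction; equivalently, the split torus is one-dimensional), so neither generalises beyond the Lorentzian setting, consistent with Example~\ref{expl:nonLorentz}.
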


\begin{proof}
  Choose a sequence of simple roots $(\alpha_k)_{k\in\mathbb{N}}$ such that $(w_k\cdot\alpha_k)_{k\in\mathbb{N}}$ is also an injective sequence of projective positive roots. Since $\Delta$ is a finite set, the sequence $(\alpha_k)_{k\in\mathbb{N}}$ visits a certain simple root, say $\alpha\in\Delta$, infinitely many times. By passing to a subsequence, we may assume that $\alpha_k=\alpha$ for all $k\in\mathbb{N}$. Since $\mathbb{P}V$ is compact, by passing again to a subsequence, we may assume that $(w_k\cdot\alpha)$ converges to a limit root $\proj\beta\in\proj Q$.  Assume that $\x_0$ is not light-like, then $h(w_k(\x_0))$ tends to infinity since
  \[
    0=\B(\proj\x,\proj\x)=\lim_{k\to\infty}\B(w_k\cdot\proj\x_0,w_k\cdot\proj\x_0)=\lim_{k\to\infty}\frac{\B(\x_0,\x_0)}{h(w_k(\x_0))^2}.
  \]
  The height $h(w_k(\alpha))$ tends to infinity (see the proof of \cite[Theorem 2.7]{hohlweg_asymptotical_2014}). Moreover, $\B(w_k(\alpha),w_k(\x_0))=\B(\alpha,\x_0)$ is constant. Therefore
  \[
    \B(\proj\x,\proj\beta)=\lim_{k\to\infty}\B(w_k\cdot\proj\x_0,w_k\cdot\alpha)=\lim_{k\to\infty}\frac{\B(\x_0,\alpha)}{h(w_k(\x_0))h(w_k(\alpha))}=0.
  \]
  Since $\proj\x$ and $\proj\beta$ are both in the projective light cone, we have $\proj\x=\proj\beta$ by Proposition \ref{prop:twolights}.  The limit direction~$\proj\x$ is therefore a limit root. Furthermore, the above argument does not depend on the choice of base point $\x_0\notin Q$. So if $\proj\y\in\proj Q$ is another limit direction arising from $\y_0\notin Q$ through the same sequence $(w_k)_{k\in\mathbb{N}}$, we have $\proj\x=\proj\y=\proj\beta\in E_\Phi$.

  If $\x_0$ is light-like, it can be decomposed as a linear combination of a time-like vector $\x'_0$ and a space-like vector $\x''_0$, see Figure~\ref{fig:lightlike} for an illustration of this case. Under the action of the sequence $(w_k)_{k\in\mathbb{N}}$, the time-like component $(w_k\cdot\proj\x'_0)_{k\in\mathbb{N}}$ converges to a limit root, see \cite[Theorem~3.3]{hohlweg_limit_2013}. Let $\proj\beta$ be this limit root.  If the space-like component $(w_k(\x''_0))_{k\in\mathbb{N}}$ does not converge to the light cone, the norm of $w_k(\x''_0)$ is bounded because the action of $W$ preserves the bilinear form. In this case, we have
  \[
    \lim_{k\to\infty}w_k\cdot\proj\x_0=\lim_{k\to\infty}w_k\cdot\proj\x'_0=\proj\beta\in E_\Phi.
  \]

  If the space-like component converges to the light cone, its direction $(w_k\cdot\proj\x''_0)_{k\in\mathbb{N}}$ also converges to the limit root $\proj\beta$. Then the sequence $(w_k\cdot\proj\x_0)_{k\in\mathbb{N}}$, being the direction of a light-like linear combination of the two components, must converge to the same limit root $\proj\beta$.
\end{proof}

As a consequence, limit directions arising from light-like directions are limit roots.

\begin{figure}[!htbp]
  \includegraphics{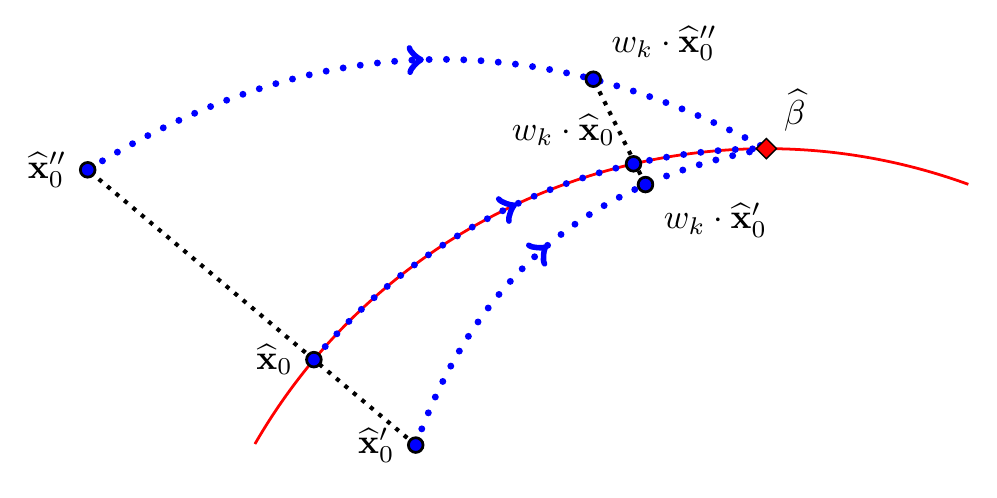}
  \caption{
    Illustration for the proof of Theorem~\ref{thm:lightlike} in the case where the base point $\x_0$ is light-like.
  }\label{fig:lightlike}
\end{figure}

\begin{corollary}[of the proof]\label{cor:samelimits}
  Limit roots arising from different base points but through the same sequence are the same.
\end{corollary}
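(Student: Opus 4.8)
The plan is to unpack the proof of Theorem~\ref{thm:lightlike} and observe that the limit root it extracts is manufactured out of the sequence $(w_k)_{k\in\mathbb{N}}$ alone, with no reference to the base point. So I would let $\proj\x$ be a limit root arising from a base point $\x_0$ through $(w_k)_{k\in\mathbb{N}}$ and $\proj\y$ a limit root arising from $\y_0$ through the \emph{same} sequence. Since limit roots lie on the isotropic cone \cite[Theorem~2.7(ii)]{hohlweg_asymptotical_2014} --- which for a Lorentzian form is the light cone --- both $\proj\x,\proj\y\in\proj Q$. It then suffices to produce a single limit root $\proj\beta$ depending only on $(w_k)_{k\in\mathbb{N}}$ and show $\proj\x=\proj\beta=\proj\y$.

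To construct $\proj\beta$, I would follow the opening lines of the proof of Theorem~\ref{thm:lightlike}: since $(w_k\cdot\proj\x_0)_{k\in\mathbb{N}}$ is injective the $w_k$ take infinitely many distinct values, so one may choose simple roots $(\alpha_k)_{k\in\mathbb{N}}$ such that $(w_k\cdot\alpha_k)_{k\in\mathbb{N}}$ is an injective sequence of projective positive roots --- a choice that ignores the base point. As $\Delta$ is finite and $\mathbb{P}V$ is compact, I would pass to a subsequence along which $\alpha_k=\alpha$ is constant and $(w_k\cdot\alpha)_{k\in\mathbb{N}}$ converges, and set $\proj\beta\in\proj Q$ to be its limit, which is a limit root by definition. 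Passing to this subsequence is harmless: $(w_k\cdot\proj\x_0)_{k\in\mathbb{N}}$ and $(w_k\cdot\proj\y_0)_{k\in\mathbb{N}}$ remain injective and still converge to $\proj\x$ and $\proj\y$ respectively.

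It then remains to run the argument of Theorem~\ref{thm:lightlike} on each base point in turn. If $\x_0\notin Q$, then $h(w_k(\x_0))^2\to\infty$ because $\B(\x_0,\x_0)\ne 0$ while $\B(w_k(\x_0),w_k(\x_0))/h(w_k(\x_0))^2\to\B(\proj\x,\proj\x)=0$, whereas $h(w_k(\alpha))\to\infty$ and $\B(w_k(\x_0),w_k(\alpha))=\B(\x_0,\alpha)$ is constant; hence $\B(\proj\x,\proj\beta)=0$, and Proposition~\ref{prop:twolights} forces $\proj\x=\proj\beta$. If $\x_0\in Q$, I would decompose it into a time-like and a space-like summand and argue exactly as in the light-like case of the proof of Theorem~\ref{thm:lightlike}: after a further subsequence the orbit of the time-like summand converges to a limit root, which by the previous case (applied to that summand) equals $\proj\beta$, and whether the space-like summand stays bounded in norm or drifts to the light cone, the direction $(w_k\cdot\proj\x_0)_{k\in\mathbb{N}}$ of the sum still converges to $\proj\beta$. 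In either case $\proj\x=\proj\beta$, and the identical reasoning applied to $\y_0$ gives $\proj\y=\proj\beta$, so $\proj\x=\proj\y$.

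The one thing to watch is the bookkeeping of the several subsequences (to fix $\alpha$, to make $(w_k\cdot\alpha)$ converge, and in the light-like case to make the two summands converge), together with the need to run all of this with the \emph{same} $\proj\beta$ for both $\x_0$ and $\y_0$. This causes no real difficulty, since limits are inherited by subsequences and the construction of $\alpha$ and $\proj\beta$ never looks at a base point. In fact the argument yields slightly more: the limit root obtained from $(w_k)_{k\in\mathbb{N}}$ does not depend on the auxiliary choice of $\alpha$ either.
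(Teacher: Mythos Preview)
Your proposal is correct and follows essentially the same approach as the paper: the corollary is labeled ``of the proof'' precisely because the proof of Theorem~\ref{thm:lightlike} already observes (in the sentence ``Furthermore, the above argument does not depend on the choice of base point $\x_0\notin Q$\dots'') that the limit root $\proj\beta$ is built from $(w_k)_{k\in\mathbb{N}}$ alone, and your write-up simply makes this explicit, including the light-like case handled via the time-like/space-like decomposition. Your added remark that $\proj\beta$ is also independent of the auxiliary choice of $\alpha$ is a correct and welcome clarification.
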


\begin{remark}\label{rem:nonLorentz}
  If the geometric Coxeter system $(W,S)_B$ is not Lorentzian, limit roots are in general not the only limit directions on the isotropic cone. An example is given in Example~\ref{expl:nonLorentz} of Section~\ref{ssec:nonLorentz}.
\end{remark}

\subsection{Infinite reduced words}\label{ssec:infword}

An \Dfn{infinite reduced word} is an infinite sequence $\w=s_1s_2\dots$ of generators in $S$ such that every finite prefix $w_k=s_1s_2\dots s_k$ is a reduced expression, see~\cite[Section~4.2]{lam_total_2013} and \cite{lam_infinite_2013}. The \Dfn{inversion set} $\inversion(\w)\subset\Phi^+$ is the set of positive roots in the form of $w_{k-1}(\alpha_{s_k})$, $k\in\mathbb{N}$. The same definition also applies to finite words. To prove the following theorem, we need the notion of biclosed sets of roots. A subset $A$ of roots in $\Phi^+$ is \Dfn{closed} if for two roots $\alpha,\beta\in A$, any root that is a positive combination of $\alpha$ and $\beta$ is also in~$A$.  A subset~$A$ is \Dfn{biclosed} if both $A$ and its complement in $\Phi^+$ are closed. Finite biclosed sets of $\Phi^+$ are in bijections with the inversion sets of elements of $W$, see~\cite[Proposition~1.2]{pilkington_convex_2006}.  See also~\cite[Chapter~2]{labbe_polyhedral_2013} for more detail on the relation between biclosed sets and the study of limit roots.

\begin{theorem}\label{thm:inv_set}
  Let $\w$ be an infinite reduced word of a Lorentzian Coxeter system $(W,S)_B$. The projective inversion set $\proj{\inversion(\w)}$ of an infinite reduced word $\w$ has a unique limit root as its accumulation point.
\end{theorem}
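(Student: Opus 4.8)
\emph{Plan and first reductions.} The plan is to realise $\inversion(\w)$ via the orbit of a time-like base point under the finite prefixes of $\w$, and to transfer the convergence of that orbit to the inversion set. Write $w_k=s_1\dots s_k$ and $\beta_k=w_{k-1}(\alpha_{s_k})$, so that $\inversion(\w)=\{\beta_k\mid k\ge1\}$. Since $\w$ is reduced the prefixes $w_k$ are pairwise distinct, hence so are the $\beta_k$, and $\inversion(\w)$ is infinite. As there are only finitely many roots of depth at most $d$ for each $d$, only finitely many $\beta_k$ have depth $\le d$; thus $\operatorname{depth}(\beta_k)\to\infty$, and consequently $\operatorname{dist}(\proj\beta_k,E_\Phi)\to 0$ (every neighbourhood of $E_\Phi$ omits only finitely many projective roots). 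In particular every accumulation point of $\proj{\inversion(\w)}$ lies in $E_\Phi\subseteq\proj Q$, and it only remains to show there is exactly one. We shall use repeatedly that every factor of a reduced word is reduced; in particular every suffix $s_{j+1}s_{j+2}\dots$ of $\w$ is again an infinite reduced word, whose inversion set $\{(s_{j+1}\dots s_{i-1})(\alpha_{s_i})\mid i>j\}$ consists of pairwise distinct positive roots.

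\emph{Passing to a time-like orbit.} Fix a future-directed time-like vector $\z_0$ with $\B(\z_0,\alpha_s)<0$ for every $s\in S$ (such vectors exist because $B$ is Lorentzian); then $\B(\z_0,\gamma)<0$ for every positive root $\gamma$. From $w_k=w_{k-1}s_k$ and $\sigma_{\alpha_{s_k}}(\z_0)=\z_0-2\B(\z_0,\alpha_{s_k})\alpha_{s_k}$ one gets, by telescoping,
\[
  w_k(\z_0)=\z_0+\sum_{j=1}^{k}t_j\,\beta_j,\qquad t_j:=-2\B(\z_0,\alpha_{s_j})>0 .
\]
Since the $\beta_j$ are distinct positive roots we have $h(\beta_j)\to\infty$, so $h(w_k(\z_0))$ strictly increases to $\infty$, which together with $\B(w_k(\z_0),w_k(\z_0))=\B(\z_0,\z_0)$ forces $\hat w_k:=\proj{w_k(\z_0)}\to\proj Q$. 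Moreover $w_{k+1}(\z_0)=w_k(\z_0)+t_{k+1}\beta_{k+1}$ shows that in the affine chart $\affine(\Delta)$ the point $\hat w_{k+1}$ lies on the segment from $\hat w_k$ to $\proj\beta_{k+1}$, with $\proj\beta_{k+1}$ outside the time-like region $\{\proj\x\mid\B(\x,\x)<0\}$. Finally, since every suffix of $\w$ is reduced, $\B(w_j(\z_0),\beta_i)=\B(\z_0,(s_{j+1}\dots s_{i-1})(\alpha_{s_i}))<0$ for all $i>j$; so in the Klein model of $\mathbb H^n$ realised on the time-like region, $(\hat w_k)_k$ is a path whose consecutive hyperbolic steps have length $\operatorname{arccosh}\!\bigl(1+t_{k+1}^{2}/(2|\B(\z_0,\z_0)|)\bigr)$ — a quantity taking finitely many positive values — and whose distances $d_{\mathbb H^n}(\hat w_j,\hat w_k)$ are strictly increasing in $k$ for each fixed $j$ and tend to $\infty$.

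\emph{The crux and the main obstacle.} It remains to prove that $(\hat w_k)_k$ converges to a point $\xi\in\partial\mathbb H^n=\proj Q$, and this is the heart of the matter. Since $\hat w_k\to\partial\mathbb H^n$, convergence is equivalent to $(\hat w_j\mid\hat w_l)_{\hat w_0}\to\infty$ as $j,l\to\infty$ for the Gromov product. Using the hyperboloid-model identity
\[
  (\hat w_j\mid\hat w_l)_{\hat w_0}=\tfrac12\log\frac{\B(\z_0,w_j(\z_0))\,\B(\z_0,w_l(\z_0))}{\B(\z_0,\z_0)\,\B(w_j(\z_0),w_l(\z_0))}+O(1),
\]
this reduces to showing that $|\B(w_j(\z_0),w_l(\z_0))|$ is small compared with $|\B(\z_0,w_j(\z_0))|\cdot|\B(\z_0,w_l(\z_0))|$; expanding the three $\B$-values through the telescoping formula, the dominant terms of $|\B(w_j(\z_0),w_l(\z_0))|$ involve the distinct positive roots $(s_{j+1}\dots s_{i-1})(\alpha_{s_i})$, $i>j$, whose heights are unbounded. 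Once $\hat w_k\to\xi$: as $\beta_{k+1}$ is proportional to $w_{k+1}(\z_0)-w_k(\z_0)$, the point $\proj\beta_{k+1}$ lies on the line through $\hat w_k$ and $\hat w_{k+1}$ beyond the point where it leaves the Klein ball, so from $\hat w_k,\hat w_{k+1}\to\xi$ and $\operatorname{dist}(\proj\beta_k,\proj Q)\to0$ we get $\proj\beta_k\to\xi$, whence $\proj{\inversion(\w)}$ has the single accumulation point $\xi$, which is a limit root by Theorem~\ref{thm:lightlike}. The obstacle is precisely making the Gromov-product estimate and this transfer step quantitative: both require control of how fast the heights of the prefix-suffix inversions of $\w$ grow, and the difficulty is that $W$ need not act cocompactly on $\mathbb H^n$, so $(\hat w_k)$ need not be a quasi-geodesic ray — if the tail of $\w$ lies in an affine facial subgroup the path runs into a cusp and $d_{\mathbb H^n}(\hat w_0,\hat w_k)$ grows only logarithmically, yet it still converges, along a horocycle, to the parabolic fixed point. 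One could instead try to bypass $\mathbb H^n$: along a subsequence with $s_k$ a fixed generator and $\proj\beta_k\to\proj\x$, the identity $\B(w_{k-1}(\z_0),\beta_k)=\B(\z_0,\alpha_{s_k})$ together with $h(w_{k-1}(\z_0)),h(\beta_k)\to\infty$ forces $\proj{w_{k-1}(\z_0)}\to\proj\x$ by Proposition~\ref{prop:twolights} and Corollary~\ref{cor:samelimits}, and combining this with the biclosedness of $\inversion(\w)$ ought to preclude two distinct accumulation points — but the hyperbolic route seems the most transparent to push through in full.
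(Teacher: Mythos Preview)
Your proposal is not a complete proof: you explicitly flag the crux --- convergence of $(\hat w_k)$ to a single boundary point of $\mathbb H^n$, equivalently the Gromov-product estimate --- and then do not carry it out. Your discussion of the obstacle (non-cocompactness, possible logarithmic escape into a cusp) is accurate, but it leaves the argument open precisely where it must be closed. The subsequent ``transfer step'' from $\hat w_k\to\xi$ to $\proj\beta_k\to\xi$ also needs more care: knowing that $\proj\beta_{k+1}$ lies on the line through $\hat w_k$ and $\hat w_{k+1}$, with both endpoints tending to $\xi$, does not by itself pin down the limit of $\proj\beta_{k+1}$, since that line may sweep through large portions of $\proj Q$ as $k\to\infty$; you would need quantitative control of the direction of $\hat w_{k+1}-\hat w_k$, which is again the Gromov-type estimate you have not supplied.

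The paper's proof is exactly the short biclosedness argument you mention in your last sentence and then set aside. Once you know every accumulation point lies on $\proj Q$ (your first paragraph already shows this), suppose for contradiction there are two distinct ones, $\proj\x$ and $\proj\y$. By strict convexity of $\proj Q$, choose $\alpha,\beta\in\inversion(\w)$ with $\proj\alpha$ near $\proj\x$ and $\proj\beta$ near $\proj\y$ so that the segment $[\proj\alpha,\proj\beta]$ meets $\proj Q$; equivalently $\B(\alpha,\beta)\le-1$. Both roots lie in $\inversion(w_k)$ for some finite prefix $w_k$. The reflections $\sigma_\alpha,\sigma_\beta$ then generate an infinite dihedral group, all of whose positive roots are nonnegative combinations of $\alpha$ and $\beta$; since $\inversion(w_k)$ is closed (finite inversion sets are biclosed), it would have to contain all of them, contradicting finiteness. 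That is the whole proof --- no hyperbolic geometry, no Gromov products, no orbit analysis. Your instinct at the very end was the right one.
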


\begin{proof} 
  As a set of positive root, the accumulation points of $\proj{\inversion(\w)}$ are limit roots. For the sake of contradiction, assume that $\proj{\inversion(\w)}$ accumulates at two distinct limit roots $\proj\x$ and~$\proj\y$. Because the projective light cone is strictly convex, we can find two projective roots $\proj\alpha, \proj\beta\in\proj{\inversion(\w)}$ respectively in the neighborhood of $\proj\x$ and $\proj\y$ such that the segment $[\proj\alpha,\proj\beta]$ intersect $\proj Q$.  Moreover, there is a positive integer $k>0$ such that $\alpha$ and $\beta$ are contained in $\inversion(w_k)$.  However, the reflections in $\alpha$ and $\beta$ generate an infinite dihedral group, so $\inversion(w_k)$ can not be finite and closed at the same time.
\end{proof}

Thus, we can associate a unique limit root to an infinite reduced word $\w$ and denote it by $\proj\gamma(\w)$. 

\begin{corollary}
  The limit root $\proj\gamma(\w)$ arises through the sequence of prefixes $(w_k=s_1\dots s_k)_{k\in\mathbb{N}}$ of~$\w$.
\end{corollary}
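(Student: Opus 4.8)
The plan is to exhibit a concrete base point through which $\proj\gamma(\w)$ arises, namely a strictly dominant vector. First I would fix $\x_0$ with $c_s:=\B(\x_0,\alpha_s)>0$ for all $s\in S$; these form the interior of $\cone(\Delta^*)$ (e.g.\ $\x_0=\sum_{s}\omega_s$), and I would take $\x_0$ generic enough there that the orbit sequence $(w_k\cdot\proj\x_0)_{k\in\mathbb N}$ is also injective — the eigenvectors of eigenvalue $\ne 1$ of the elements $v\in W\setminus\{e\}$ fill only countably many proper subspaces, since $\rho(v)=-I$ is impossible in an infinite Lorentzian system (it would not preserve the Tits cone). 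Writing $\gamma_k:=w_{k-1}(\alpha_{s_k})$ for the $k$-th element of $\inversion(\w)$, and using $s_k(\x_0)=\x_0-2\B(\x_0,\alpha_{s_k})\alpha_{s_k}$ (recall $\B(\alpha_{s_k},\alpha_{s_k})=1$) together with linearity of $w_{k-1}$, an easy induction on $k$ gives the telescoping identity
\[
  w_k(\x_0)=\x_0-2\sum_{j=1}^{k}c_{s_j}\,\gamma_j
\]
(equivalently $w_k(\x_0)=\sigma_{\gamma_k}(w_{k-1}(\x_0))$, reflection in $\gamma_k$). Everything then reduces to understanding the right-hand side as $k\to\infty$.

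Next I would collect the needed facts about the roots $\gamma_j$. They are pairwise distinct positive roots (the inversion set of a reduced word of length $k$ has exactly $k$ roots), so $(\proj\gamma_j)_{j}$ is an injective sequence in $\proj{\inversion(\w)}$, whose unique accumulation point is $\proj\gamma(\w)$ by Theorem~\ref{thm:inv_set}; hence $\proj\gamma_j\to\proj\gamma(\w)$ in $\mathbb P V$. Since $\proj\gamma(\w)$ is a limit root it lies on $\proj Q$, and, being an accumulation point of $\proj\Phi^+\subseteq\convex(\Delta)$, it lies in $\convex(\Delta)\subseteq\affine(\Delta)=\{h=1\}$; so in this affine chart $\widehat\gamma_j=\gamma_j/h(\gamma_j)\to g$, the height-one representative of $\proj\gamma(\w)$. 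Because positive roots have height $\ge 1$, the partial sums $T_k:=\sum_{j\le k}c_{s_j}h(\gamma_j)$ satisfy $T_k\ge k\min_s c_s\to\infty$.

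Then I would carry out the limit computation. Setting $\mu_j:=c_{s_j}h(\gamma_j)>0$ (so $\sum_{j\le k}\mu_j=T_k$), the telescoping identity gives $h(w_k(\x_0))=h(\x_0)-2T_k\to-\infty$, whence for large $k$
\[
  w_k\cdot\proj\x_0=\widehat{w_k(\x_0)}=\frac{\x_0-2\sum_{j\le k}c_{s_j}\gamma_j}{h(\x_0)-2T_k}=\frac{T_k^{-1}\x_0-2\,T_k^{-1}\sum_{j\le k}\mu_j\widehat\gamma_j}{T_k^{-1}h(\x_0)-2}.
\]
The inner vector $T_k^{-1}\sum_{j\le k}\mu_j\widehat\gamma_j$ is a convex combination of $\widehat\gamma_1,\dots,\widehat\gamma_k$, and since $\widehat\gamma_j\to g$ and $T_k\to\infty$, a standard Toeplitz/Ces\`aro estimate (split the sum at an index $N$ past which $\widehat\gamma_j$ is $\varepsilon$-close to $g$, then use $T_k\to\infty$ to swamp the fixed initial block) shows it tends to $g$; as $T_k^{-1}\x_0\to 0$ and $T_k^{-1}h(\x_0)\to 0$, the whole fraction tends to $(-2g)/(-2)=g$. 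Hence $w_k\cdot\proj\x_0\to\proj\gamma(\w)$, which together with the injectivity secured above is exactly the assertion that $\proj\gamma(\w)$ arises from $\x_0$ through the prefix sequence $(w_k)$.

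The step I expect to be the crux is not the algebra but the convergence of the Ces\`aro average: passing from ``$\proj\gamma(\w)$ is \emph{an} accumulation point of $\proj{\inversion(\w)}$'' to ``$\widehat\gamma_j\to g$'' is precisely where the uniqueness clause of Theorem~\ref{thm:inv_set} is indispensable — without it the average could fail to converge, or converge to the wrong limit root. Arranging full injectivity of $(w_k\cdot\proj\x_0)_k$ is a harmless technicality; if preferred, one simply passes to an injective subsequence, which still converges to $\proj\gamma(\w)$.
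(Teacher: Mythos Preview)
Your proof is correct and takes a genuinely different route from the paper's. The paper's argument is much shorter: it notes that some generator $s$ occurs infinitely often in $\w$, so that along the subsequence of indices with $s_{k+1}=s$ one has $w_k(\alpha_s)\in\inversion(\w)$; this injective subsequence of projective roots converges to $\proj\gamma(\w)$ by Theorem~\ref{thm:inv_set}, exhibiting $\proj\gamma(\w)$ as arising from the simple root $\alpha_s$ through (a subsequence of) the prefixes, and Corollary~\ref{cor:samelimits} is then invoked to make the conclusion independent of the chosen root. Your argument instead picks a strictly dominant base point $\x_0$, writes down the exact telescoping identity $w_k(\x_0)=\x_0-2\sum_{j\le k}c_{s_j}\gamma_j$, and extracts the limit via a weighted Ces\`aro average of the normalized inversion roots $\widehat\gamma_j$. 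What your approach buys: it is self-contained---it does not appeal to Corollary~\ref{cor:samelimits}, hence not to the Lorentzian-specific Theorem~\ref{thm:lightlike} behind it---and for a generic $\x_0$ it gives convergence of the \emph{entire} prefix sequence, not just a subsequence. What the paper's approach buys: brevity, and a base point that is itself a root. One small remark: the inequality $h(\gamma_j)\ge 1$ you quote is not the cleanest justification for $T_k\to\infty$ in a general geometric representation; but you do not actually need it, since $\widehat\gamma_j\to g\in\proj Q$ together with $\B(\gamma_j,\gamma_j)=1$ already forces $h(\gamma_j)\to\infty$, hence $T_k\to\infty$.
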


\begin{proof}
  At least one generator $s\in S$ appears in $\w$ infinitely many times, so we can take from $\inversion(\w)$ an injective subsequence $(w_k(\alpha_s))$ such that $s_{k+1}=s$.  Then $\proj\gamma(\w)$ is the limit root arising from $\alpha_s$ through the sequence $(w_k)$. By Corollary~\ref{cor:samelimits}, the same limit root arises through the same sequence from any projective root. We then conclude that $\proj\gamma(\w)$ arises through the sequence $(w_k)_{k\in\mathbb{N}}$.
\end{proof}

Consider two infinite reduced word $\w$ and $\w'$. It is easy to see that $\proj\gamma(\w)=\proj\gamma(\w')$ if $\inversion(\w)\cap\inversion(\w')$ contains infinitely many roots.  Lam and Thomas~\cite[Theorem 1(1)]{lam_infinite_2013} proved that $\w$ and $\w'$ correspond to the same set of points on the Tits boundary of the Davis complex if $\inversion(\w)$ and $\inversion(\w')$ differ by finitely many roots.  Moreover, since infinite reduced words correspond to geodesic rays in the Cayley graph of $(W,S)$, there is also a correspondence between limit roots and the group ends of $W$. In Figure~\ref{fig:inversion_ends}, the inversion set of an infinite Coxeter element and the corresponding sequence of chambers in the Tits cone are shown.  The sequence of chambers correspond to a geodesic ray in the Cayley graph of $(W,S)$.

\begin{figure}
  \includegraphics{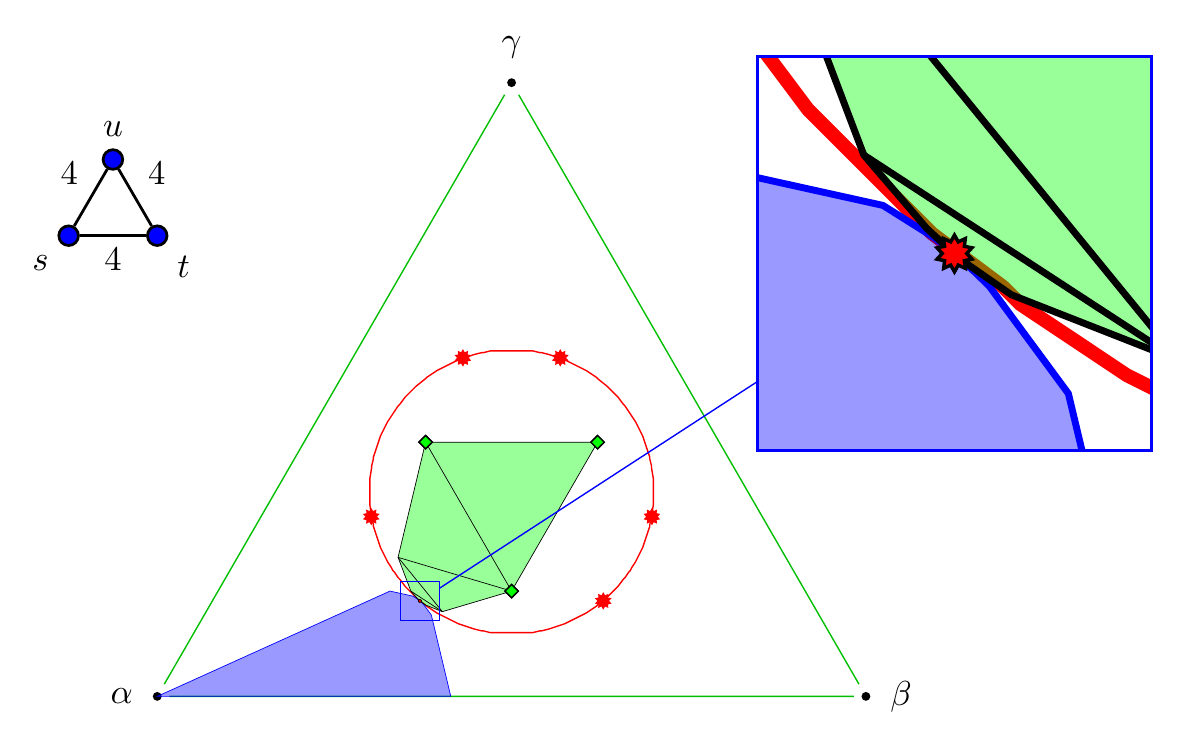}
  \caption{
    The inversion set of the infinite reduced word $(stu)^{\infty}$ represented by the convex hull of its roots, and the corresponding sequence of chambers inside the Tits cone. They are disjoint and share a unique point on their boundary: the limit root given by the dominant eigendirection of $stu$.
  }\label{fig:inversion_ends}
\end{figure}

It would be interesting to find an equivalence relation on infinite reduced words such that two words $\w$ and $\w'$ are equivalent if and only if $\proj\gamma(\w)=\proj\gamma(\w')$. None of the results above satisfy this requirement. Consider the infinite reduced words $(st)^\infty$ and $(ts)^\infty$ of an affine infinite dihedral group. Their inversion sets are disjoint, but they correspond to a same limit root.  It would make more sense if $(st)^\infty\sim(ts)^\infty$ whenever $c_{st}=1$.  A way to encode geometric information (the bilinear form $\B$) into the equivalence relation would be very helpful. Besides, it may be possible to define limit roots as the completion of limit roots obtained from infinite-order elements using a metric on sequences of infinite-order elements which respects the geometry.

On the one hand, Lam and Pylyavskyy conjectured that the \emph{limit weak order}, i.e. the finite and infinite inversion sets ordered by inclusion, for the affine Coxeter group $\tilde A_n$ forms a lattice, see~\cite[Conjecture~10.3]{lam_total_2013}. On the other hand, Dyer conjectured that the \emph{extended weak order}, i.e. the biclosed sets ordered by inclusion, forms a complete ortholattice, see~\cite[Conjecture~2.5]{dyer_weak_2011}. In view of Theorem~\ref{thm:inv_set}, it seems reasonable to use the notion of limit roots to unify both conjectures. Namely, one verifies that an infinite inversion set $\inversion(\w)$ is biclosed in the affine and Lorentzian case, otherwise it would contradict the biclosedness of the inversion set of a certain finite prefix of $\w$. The difference between the two conjectures lies in the fact that there are many biclosed sets that are neither finite nor cofinite, yet are not infinite inversion sets, see~\cite[Figure~2.11]{labbe_polyhedral_2013} for an example.  The relations between the two conjectures should be made clear and deserve better attention.

%
%

\section{Spectra of elements of Lorentzian Coxeter groups}\label{sec:Spectral}

\subsection{Spectra of Lorentz transformations}

In a Lorentz space $(V,\B)$, a subspace $U$ of $V$ is \Dfn{space-like} if its non-zero vectors are all space-like, \Dfn{light-like} if $U$ contains some non-zero light-like vector but no time-like vector, or \Dfn{time-like} if $U$ contains some time-like vector. Two vectors $\x,\y\in V$ are said to be orthogonal if $\B(\x,\y)=0$. For a vector $\x\in V$, we define its orthogonal hyperplane
\[
  H_\x=\{\y\in V\mid \B(\x,\y)=0\}.
\]
We see that $H_\x$ is space-like (resp.~light-like, time-like) if $\x$ is time-like (resp.~light-like, space-like). For a subspace $U$ of $V$, its orthogonal companion is defined as
\[
  U^\perp=\{\y\in V\mid\B(\y,\x)=0\text{ for all }\x\in U\}.
\]
If $U$ is lightlike, $U+U^\perp$ is not the whole space $V$.

To study the eigenvalues and eigenvectors of Lorentz transformations, it is useful to work in the complexification $V_\mathbb{C}=V\oplus iV$. A vector $\z\in V_\mathbb{C}$ can be written as $\z=\x+i\y$ for $\x,\y\in V$. We call~$\x$ (resp.~$\y$) the \Dfn{real part} (resp.~\Dfn{imaginary part}) of $\z$, and the vector $\conjugate\z=\x-i\y$ represents its \Dfn{conjugate vector}. If $\y\ne 0$, we say that $\z$ is \Dfn{space-like} (resp.~\Dfn{light-like}, \Dfn{time-like}) if the subspace spanned by $\x$ and $\y$ is space-like (resp.~light-like, time-like). The bilinear form $\B$ on $V$ is viewed as the restriction of a sesquilinear form on $V_\mathbb{C}$ defined by requiring in addition that 
\[
  \B(\lambda\z_1,\mu\z_2)=\lambda\conjugate\mu\B(\z_1,\z_2)
\]
for $\z_1,\z_2\in V_\mathbb{C}$ and $\lambda,\mu \in \mathbb{C}$. Then the action of the Lorentz group $O_{\B}(V)$ naturally extends to~$V_\mathbb{C}$.  Again, two vectors $\z_1,\z_2\in V_\mathbb{C}$ are \Dfn{orthogonal} if $\B(\z_1,\z_2)=0$. 

\begin{remark}
  The bilinear form $\B$ associated to $V$ can also be viewed as the restriction of a bilinear form on $V_\mathbb{C}$, as in \cite[Chapter III]{riesz_clifford_1958}. This is algebraically more natural, while a sesquilinear form is geometrically more natural.
\end{remark}

A non-zero vector $\z\in V_\mathbb{C}$ is an eigenvector of a Lorentz transformation $\phi\in O_{\B}(V)$ if $\phi(\z)=\lambda\z$ for some $\lambda\in\mathbb{C}$. An eigenvalue $\lambda$ is \Dfn{unimodular} if $|\lambda|=1$, in which case a $\lambda$-eigenvector $\z$ is also said to be unimodular. If $\x\in V$ is an eigenvector of $\phi$, we call $\proj\x\in\mathbb{P}V$ an \Dfn{eigendirection} of $\phi$.

The following proposition gathers some basic facts about eigenvectors.

\begin{proposition}\label{prop:properties}
  Let $\phi$ be a Lorentz transformation and $\z$ be a $\lambda$-eigenvector of $\phi$, then
  \begin{enumerate}
      \renewcommand{\labelenumi}{(\roman{enumi})}
      \renewcommand{\theenumi}{\labelenumi}
    \item\label{prop:complexconj} $\conjugate\z$ is an eigenvector of $\phi$ with eigenvalue $\conjugate\lambda$,
    \item\label{prop:powers} $\z$ is an eigenvector of $w^k$, $k\in\mathbb{N}$, with eigenvalue $\lambda^k$,
    \item\label{prop:inverse} $\z$ is an eigenvector of $w^{-1}$ with eigenvalue $\lambda^{-1}$,
    \item\label{prop:groupconj} let $\varphi\in O_\B(V)$, then $\varphi(\z)$ is an eigenvector of $\varphi\phi\varphi^{-1}$ with eigenvalue $\lambda$.
  \end{enumerate}
\end{proposition}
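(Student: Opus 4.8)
The plan is to establish all four items by direct computation; none of them requires anything beyond the $\mathbb{C}$-linearity of $\phi$ and the invertibility of a Lorentz transformation, so there is no serious obstacle. The only point that deserves a word of care is that the symbols $w^k$ and $w^{-1}$ appearing in (ii) and (iii) are to be read as $\phi^k$ and $\phi^{-1}$, and that in (iii) one must first note $\lambda\neq 0$.

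For (i), I would use that $\phi$ is a \emph{real} map: it is the restriction to $V$ of a $\mathbb{C}$-linear endomorphism of $V_\mathbb{C}$ preserving $V$, hence it commutes with the conjugation $\z\mapsto\conjugate\z$. Applying this to $\phi(\z)=\lambda\z$ gives $\phi(\conjugate\z)=\conjugate{\phi(\z)}=\conjugate{\lambda\z}=\conjugate\lambda\,\conjugate\z$, and $\conjugate\z\neq 0$ since $\z\neq 0$, so $\conjugate\z$ is a $\conjugate\lambda$-eigenvector.

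For (ii), I would argue by induction on $k$: the case $k=1$ is the hypothesis, and if $\phi^{k-1}(\z)=\lambda^{k-1}\z$, then $\phi^{k}(\z)=\phi\bigl(\phi^{k-1}(\z)\bigr)=\lambda^{k-1}\phi(\z)=\lambda^{k}\z$, using the $\mathbb{C}$-linearity of $\phi$ to pull out the scalar. For (iii), since $\phi\in O_\B(V)$ is invertible and $\z\neq 0$ one has $\lambda\neq 0$; applying $\phi^{-1}$ to $\phi(\z)=\lambda\z$ and dividing by $\lambda$ then yields $\phi^{-1}(\z)=\lambda^{-1}\z$. For (iv), I would simply compute $(\varphi\phi\varphi^{-1})(\varphi(\z))=\varphi(\phi(\z))=\varphi(\lambda\z)=\lambda\,\varphi(\z)$, noting that $\varphi(\z)\neq 0$ because $\varphi$ is invertible; this last item is the one that will later be combined with closure of the group under conjugation to transport eigendirections of infinite-order elements. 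In short, the whole proof is a sequence of one-line verifications, and the only thing to be vigilant about is keeping track of $\lambda\neq 0$ and of which transformation ($\phi$, $\phi^k$, $\phi^{-1}$, or $\varphi\phi\varphi^{-1}$) one is testing the vector against.
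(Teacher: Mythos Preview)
Your proof is correct. The paper states this proposition without proof, treating the four items as elementary facts about eigenvectors of linear maps; your direct verifications are exactly the standard arguments one would supply, and there is nothing to compare against.
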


\begin{proposition}\label{prop:orthogonal}
  Let $\z_1$ and $\z_2$ be $\lambda$- and $\mu$-eigenvectors of $\phi\in O_{\B}(V)$, respectively. If $\lambda\conjugate\mu\ne 1$, then $\B(\z_1,\z_2)=0$.
\end{proposition}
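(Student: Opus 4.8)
This is the standard eigenvector-orthogonality argument for forms preserved by a transformation, adapted to the sesquilinear setting. The plan is to apply $\phi$ to both arguments of $\B(\z_1,\z_2)$, use invariance of the form, pull the eigenvalues out using the sesquilinearity convention, and then cancel.

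\textbf{Key steps.} First I would record that, since $\phi$ is a real linear map on $V$ preserving the real bilinear form $\B$, its complexification preserves the sesquilinear extension of $\B$ to $V_\mathbb{C}$; concretely, for $\z_1=\x_1+i\y_1$ and $\z_2=\x_2+i\y_2$ one expands $\B(\phi(\z_1),\phi(\z_2))$ into real and imaginary parts and applies $\B(\phi(\x),\phi(\y))=\B(\x,\y)$ on $V$ term by term. Granting this, the computation is
\[
  \B(\z_1,\z_2)=\B(\phi(\z_1),\phi(\z_2))=\B(\lambda\z_1,\mu\z_2)=\lambda\conjugate\mu\,\B(\z_1,\z_2),
\]
where the second equality uses that $\z_1,\z_2$ are $\lambda$- and $\mu$-eigenvectors and the third uses the defining property $\B(\lambda\z_1,\mu\z_2)=\lambda\conjugate\mu\B(\z_1,\z_2)$ stated above. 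Rearranging gives $(1-\lambda\conjugate\mu)\B(\z_1,\z_2)=0$, and since $\lambda\conjugate\mu\ne 1$ by hypothesis, we conclude $\B(\z_1,\z_2)=0$.

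\textbf{Main obstacle.} There is essentially no obstacle: the only point requiring a word of care is the first step, namely that the complexified action genuinely preserves the sesquilinear form rather than, say, a bilinear extension — this is exactly why the convention $\B(\lambda\z_1,\mu\z_2)=\lambda\conjugate\mu\B(\z_1,\z_2)$ was set up, and it makes the cancellation work with $\lambda\conjugate\mu$ (not $\lambda\mu$), which is what allows the conclusion under the hypothesis $\lambda\conjugate\mu\ne 1$. Everything else is a two-line manipulation.
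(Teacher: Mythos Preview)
Your proof is correct and is essentially identical to the paper's own argument: apply invariance of the form under $\phi$, use the eigenvector hypotheses and the sesquilinear convention to obtain $\B(\z_1,\z_2)=\lambda\conjugate\mu\,\B(\z_1,\z_2)$, and conclude. Your extra remark justifying that the complexified action preserves the sesquilinear extension is a welcome clarification that the paper leaves implicit.
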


\begin{proof}
  Since $w$ preserves the bilinear form, we have
  \begin{equation*}
    \B(\z_1,\z_2)=\B(\phi(\z_1),\phi(\z_2))=\lambda\conjugate\mu\B(\z_1,\z_2).
  \end{equation*}
  So $\B(\z_1,\z_2)=0$ because $\lambda\conjugate\mu\ne 1$.
\end{proof}

In the following propositions, we classify Lorentz transformations into three types. Such a classification is present in many references, often in the language of M\"obius transformations or hyperbolic isometries, see for instance \cite[Chapter 4, Theorem 1.6]{alekseevskij_geometry_1993}, \cite[Section~4.7]{ratcliffe_foundations_2006}, \cite[Proposition 4.5.1]{krammer_conjugacy_2009} and \cite[Section 7.8]{shafarevich_linear_2013}.  Our formulation is adapted from \cite[Chapter III]{riesz_clifford_1958}, which deals with Lorentz space and is suitable for our use.  See also discussions in~\cite[Section~3.3]{cecil_lie_2008} for a geometric insight.

\begin{proposition}[{\cite[Section~3.7]{riesz_clifford_1958}}]\label{prop:3types}
  Lorentz transformations are partitionned into three types:
  \begin{itemize}
    \item \Dfn{Elliptic transformations} are diagonalizable, and have only unimodular eigenvalues,
    \item \Dfn{Parabolic transformations} have only unimodular eigenvalues, but are not diagonalizable,
    \item \Dfn{Hyperbolic transformations} are diagonalizable and have exactly one pair of simple, real, non-unimodular eigenvalues, namely $\lambda^{\pm 1}$ for some $\lambda>1$.
  \end{itemize}
\end{proposition}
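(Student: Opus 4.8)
The plan is to pass to the complexification $V_{\mathbb C}$ and pin down the spectrum of a Lorentz transformation $\phi\in O_{\B}(V)$ using only the two orthogonality facts already available — Proposition~\ref{prop:twolights} on light-like vectors and Proposition~\ref{prop:orthogonal} on eigenvectors — together with the signature $(n-1,1)$. Two preliminary remarks set the stage. Since $\phi$ preserves the non-degenerate form $\B$, it has the same characteristic polynomial as its inverse, so its spectrum is invariant under $\lambda\mapsto\lambda^{-1}$; and since $\phi$ is a real operator, its spectrum is invariant under complex conjugation (Proposition~\ref{prop:properties}). In particular, if $\lambda$ is an eigenvalue then so is $\lambda^{-1}$, and each non-unimodular eigenvalue drags along $\conjugate\lambda$, $\lambda^{-1}$ and $\conjugate\lambda^{-1}$.

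The crucial step is to show that every non-unimodular eigenvalue is real and admits a real light-like eigenvector. Let $\lambda$ be an eigenvalue with $|\lambda|>1$ and let $\z$ be a $\lambda$-eigenvector. From $\B(\z,\z)=\B(\phi\z,\phi\z)=|\lambda|^{2}\B(\z,\z)$ and $|\lambda|^{2}\ne 1$ we get $\B(\z,\z)=0$. Since $\conjugate\z$ is a $\conjugate\lambda$-eigenvector, Proposition~\ref{prop:orthogonal} applies to the pair $(\z,\conjugate\z)$ — the product to be checked there is $\lambda\conjugate{\conjugate\lambda}=\lambda^{2}$, which is $\ne 1$ because $|\lambda|>1$ — and gives $\B(\z,\conjugate\z)=0$. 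Writing $\z=\x+i\y$ with $\x,\y\in V$, these two relations unwind, via the sesquilinearity convention, into $\B(\x,\x)=\B(\y,\y)=0$ together with $\B(\x,\y)=0$; thus $\x$ and $\y$ are orthogonal light-like vectors, hence proportional by Proposition~\ref{prop:twolights}. Therefore $\z$ is a complex scalar multiple of a real light-like vector, and, $\phi$ being real, $\lambda\in\mathbb R$.

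From here everything falls into place. If $|\lambda|,|\mu|>1$, then choosing real light-like eigenvectors for them and using $\lambda\mu\ne 1$ in Proposition~\ref{prop:orthogonal} makes these vectors orthogonal, hence proportional by Proposition~\ref{prop:twolights}, which forces $\lambda=\mu$; the same argument inside a single eigenspace shows it is one-dimensional. So $\phi$ has at most one eigenvalue $\lambda$ of modulus $>1$, it is real and geometrically simple, and likewise for $\lambda^{-1}$. To see that this hyperbolic case is diagonalizable, pick real light-like eigenvectors $\z$ for $\lambda$ and $\z'$ for $\lambda^{-1}$; if $\B(\z,\z')=0$ then $\z\parallel\z'$ and $\lambda=\lambda^{-1}$, which is impossible, so $\B(\z,\z')\ne 0$ and $P:=\mathbb R\z\oplus\mathbb R\z'$ is a non-degenerate, $\phi$-invariant plane of signature $(1,1)$. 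Then $V=P\oplus P^{\perp}$ with $P^{\perp}$ both $\phi$-invariant and positive definite, so $\phi|_{P^{\perp}}$ is an orthogonal transformation of a Euclidean space, hence diagonalizable with unimodular eigenvalues, while $\phi|_{P}=\operatorname{diag}(\lambda,\lambda^{-1})$ in the basis $\{\z,\z'\}$. Comparing characteristic polynomials, $\lambda^{\pm1}$ are algebraically simple; thus $\phi$ is diagonalizable and its only non-unimodular eigenvalues form the simple pair $\lambda^{\pm1}$. (This gives $|\lambda|>1$; the sharper $\lambda>1$ holds once one restricts to time-orientation-preserving transformations, which is automatic for geometric Coxeter groups since their generating reflections fix a time-like hyperplane.) Finally, if $\phi$ has no eigenvalue of modulus $>1$ then all its eigenvalues are unimodular, and $\phi$ is elliptic or parabolic according to whether or not it is diagonalizable; the three classes are pairwise disjoint because a non-unimodular eigenvalue excludes the first two and diagonalizability separates those two.

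I expect the genuinely delicate point to be the crucial step above: converting the sesquilinear identities $\B(\z,\z)=\B(\z,\conjugate\z)=0$ into the statement that the real and imaginary parts of $\z$ are light-like and mutually orthogonal, so that Proposition~\ref{prop:twolights} applies. This is precisely where the Lorentzian signature is indispensable — for forms of higher Witt index the conclusion ``real light-like eigenvector'' is false, and the uniqueness, simplicity and diagonalizability arguments all collapse. A comparatively cosmetic loose end is the sign of $\lambda$, which is a statement about the connected component of $O_{\B}(V)$ containing $\phi$, not about its spectrum.
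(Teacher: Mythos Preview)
The paper does not prove Proposition~\ref{prop:3types}: it is quoted from Riesz~\cite[Section~3.7]{riesz_clifford_1958} without argument, as are the companion Propositions~\ref{prop:hyperbolic} and~\ref{prop:parabolic}. There is therefore no ``paper's own proof'' to compare against.

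Your argument is correct and self-contained, using only Propositions~\ref{prop:twolights}, \ref{prop:properties} and~\ref{prop:orthogonal}. The crucial step --- turning the sesquilinear identities $\B(\z,\z)=\B(\z,\conjugate\z)=0$ into $\B(\x,\x)=\B(\y,\y)=\B(\x,\y)=0$ for the real and imaginary parts, and then invoking Proposition~\ref{prop:twolights} to force $\z$ to be a complex multiple of a real light-like vector --- is exactly where the Lorentzian signature bites, and your computation there is right. The splitting $V=P\oplus P^{\perp}$ with $P$ of signature $(1,1)$ and $P^{\perp}$ Euclidean then gives diagonalizability and algebraic simplicity of $\lambda^{\pm 1}$ in the hyperbolic case with no further work. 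Your closing caveat is also accurate: the statement ``$\lambda>1$'' rather than ``$|\lambda|>1$'' tacitly assumes the time-orientation-preserving component of $O_{\B}(V)$, which is automatic for the Coxeter-group elements the paper actually studies (the generating reflections fix time-like hyperplanes) but not for an arbitrary Lorentz transformation.
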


\begin{proposition}[{\cite[Section~3.7-3.9]{riesz_clifford_1958}}]\label{prop:hyperbolic}
  The two non-unimodular eigendirections of a hyperbolic transformation are light-like, while its unimodular eigenvectors are space-like. 
\end{proposition}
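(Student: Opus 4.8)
The plan is to read everything off from the two orthogonality statements (Propositions~\ref{prop:orthogonal} and~\ref{prop:twolights}) together with a signature count, avoiding any appeal to hyperbolic geometry. Let $\phi$ be a hyperbolic transformation and let $\lambda>1$ be its real non-unimodular eigenvalue, so that by Proposition~\ref{prop:3types} the eigenvalues of $\phi$ are $\lambda$, $\lambda^{-1}$ (both real and simple) and a collection of unimodular ones. Since $\lambda$ and $\lambda^{-1}$ are real, the corresponding eigenspaces are nonzero and defined over $\mathbb{R}$, so I would start by fixing \emph{real} eigenvectors $\z_+,\z_-\in V$ with $\phi(\z_+)=\lambda\z_+$ and $\phi(\z_-)=\lambda^{-1}\z_-$. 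Applying Proposition~\ref{prop:orthogonal} with $\z_1=\z_2=\z_+$ (so both eigenvalues equal $\lambda$) gives $\B(\z_+,\z_+)=0$ because $\lambda\conjugate\lambda=\lambda^2\ne1$; as $\z_+\ne0$, the direction $\proj\z_+$ is light-like, and symmetrically $\B(\z_-,\z_-)=0$ and $\proj\z_-$ is light-like. This settles the first half of the statement.

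For the unimodular eigenvectors, set $b=\B(\z_+,\z_-)$ and $P=\mathbb{R}\z_+\oplus\mathbb{R}\z_-$. If $b=0$, then $\z_+,\z_-$ would be two orthogonal light-like vectors, hence proportional by Proposition~\ref{prop:twolights}, contradicting that they lie in distinct eigenspaces; so $b\ne0$. Thus the Gram matrix of $(\z_+,\z_-)$ is anti-diagonal with off-diagonal entry $b$, hence non-degenerate with signature $(1,1)$, so $P$ is a non-degenerate Lorentzian plane and $V=P\oplus P^\perp$. Now let $\z\in V_\mathbb{C}$ be any eigenvector of $\phi$ with unimodular eigenvalue $\mu$. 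Since $|\lambda\conjugate\mu|=\lambda\ne1$ and $|\lambda^{-1}\conjugate\mu|=\lambda^{-1}\ne1$, Proposition~\ref{prop:orthogonal} yields $\B(\z_+,\z)=\B(\z_-,\z)=0$, i.e.\ the real and imaginary parts of $\z$ lie in $P^\perp$. Finally, from $V=P\oplus P^\perp$ and additivity of signature for the orthogonal sum of a non-degenerate subspace and its companion, $P^\perp$ has signature $(n-1,1)-(1,1)=(n-2,0)$, hence is positive definite, i.e.\ space-like. Therefore the real span of the real and imaginary parts of $\z$ is a space-like subspace, so $\z$ is space-like; in the cases $\mu=\pm1$, where $\z$ may be taken real, this just says $\B(\z,\z)>0$.

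I do not anticipate a serious obstacle: the whole argument is bookkeeping. The point needing the most care is the passage between complex eigenvectors and the real geometry — one must use genuinely real eigenvectors $\z_\pm$ before invoking Proposition~\ref{prop:twolights}, and one must state the conclusion for a unimodular $\z$ in terms of the subspace spanned by its real and imaginary parts, as in the definition of a space-like complex vector, rather than through a ``norm'' of $\z$ itself. A minor loose end is citing the additivity of signature under $V=P\oplus P^\perp$, which is standard because $P$ is non-degenerate.
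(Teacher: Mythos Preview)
Your argument is correct. The paper does not supply its own proof of this proposition: it is stated with a citation to Riesz~\cite[Sections~3.7--3.9]{riesz_clifford_1958} and used as a black box. So there is nothing in the paper to compare your approach against.

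That said, your route is exactly the natural one given the surrounding material: you use only Proposition~\ref{prop:orthogonal} (self-orthogonality forces $\B(\z_\pm,\z_\pm)=0$ since $\lambda^{\pm2}\ne1$), Proposition~\ref{prop:twolights} (to get $\B(\z_+,\z_-)\ne0$ and hence that $P=\mathbb{R}\z_+\oplus\mathbb{R}\z_-$ is non-degenerate of signature $(1,1)$), and then a signature count on $V=P\oplus P^\perp$ to force $P^\perp$ positive definite, into which all unimodular eigenvectors fall again by Proposition~\ref{prop:orthogonal}. Each step is sound. Your caution about working with genuinely real $\z_\pm$ before invoking Proposition~\ref{prop:twolights}, and about interpreting ``space-like'' for a complex eigenvector via the real span of its real and imaginary parts, is well placed and handled correctly: from $\B(\z_\pm,\z)=0$ with $\z_\pm$ real and the form conjugate-linear in the second slot, both real and imaginary parts of $\z$ land in $P^\perp$.

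The only cosmetic point is that the additivity of signature you invoke is indeed standard once $P$ is non-degenerate; you might cite it or simply observe that diagonalizing $\B$ on $P$ and on $P^\perp$ separately diagonalizes $\B$ on $V$.
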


\begin{proposition}[{\cite[Section~3.10]{riesz_clifford_1958}}]\label{prop:parabolic}
  The Jordan form of a parabolic transformation $\phi$ contains a unique Jordan block of size~$3$, corresponding to the eigenvalue $\varepsilon=1$ or $-1$.  The $(n-2)$-dimensional \emph{real} subspace $U_\phi$ spanned by eigenvectors of $\phi$ is light-like. The $1$-dimensional light-like subspace of $U_\phi$ is a $\varepsilon$-eigendirection. The minimal polynomial $f(x)$ such that $f(\phi)$ annihilates~$U^\perp_\phi$ is $(x-\varepsilon)^2$.
\end{proposition}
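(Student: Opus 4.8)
The guiding idea is to extract light-like vectors from the Jordan structure of $\phi$ and then let the Lorentzian signature do the work. The basic computation is this: if $\z_1,\dots,\z_k$ is a Jordan chain of $\phi$ for a unimodular eigenvalue $\lambda$, normalized so that $\phi(\z_1)=\lambda\z_1$ and $\phi(\z_j)=\lambda\z_j+\z_{j-1}$ for $j\ge 2$, then expanding $\B(\z_i,\z_j)=\B(\phi(\z_i),\phi(\z_j))$ and using $|\lambda|=1$ gives, by a short induction on $i$, that $\B(\z_i,\z_j)=0$ whenever $i+j\le k$. In particular the bottom vector $\z_1$ of any nontrivial Jordan block is light-like, and $\z_1,\dots,\z_{\lfloor k/2\rfloor}$ span a totally isotropic subspace; by Proposition~\ref{prop:twolights} such a subspace has dimension at most $1$. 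This bound, combined with the non-degeneracy of $\B$, is what forces the Jordan form into the asserted shape.

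First I would pin down the eigenvalues. By Proposition~\ref{prop:3types}, $\phi$ is not diagonalizable, so a nontrivial Jordan block exists; call its eigenvalue $\lambda$ and its bottom vector $\z$, so that $\B(\z,\z)=0$. If $\lambda\ne\pm 1$, then $\conjugate\z$ is a $\conjugate\lambda$-eigenvector (Proposition~\ref{prop:properties}\ref{prop:complexconj}) and $\lambda^2\ne 1$, so Proposition~\ref{prop:orthogonal} applied to $\z$ and $\conjugate\z$ gives $\B(\z,\conjugate\z)=0$; together with $\B(\z,\z)=\B(\conjugate\z,\conjugate\z)=0$ this makes the real $2$-plane spanned by the real and imaginary parts of $\z$ totally isotropic, contradicting Proposition~\ref{prop:twolights}. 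Hence every nontrivial block has real eigenvalue $\varepsilon\in\{1,-1\}$. For uniqueness, take two distinct nontrivial blocks with bottom vectors $\z^{(1)},\z^{(2)}$: if their eigenvalues differ then $\B(\z^{(1)},\z^{(2)})=0$ by Proposition~\ref{prop:orthogonal}, and if they agree, picking $\w^{(2)}$ with $(\phi-\varepsilon)\w^{(2)}=\z^{(2)}$ and expanding $\B(\z^{(1)},\w^{(2)})=\B(\phi(\z^{(1)}),\phi(\w^{(2)}))$ again yields $\B(\z^{(1)},\z^{(2)})=0$; either way $\operatorname{span}(\z^{(1)},\z^{(2)})$ is a $2$-dimensional totally isotropic subspace, excluded. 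So there is a unique nontrivial block, of eigenvalue $\varepsilon$ and size $k\ge 2$. If $k\ge 4$ then $\operatorname{span}(\z_1,\z_2)$ is totally isotropic of dimension $2$, excluded; if $k=2$ then the identities obtained by expanding $\B(\z_2,\z_2)$ and $\B(\z_2,\mathbf u)$ for an eigenvector $\mathbf u$ of a size-$1$ block, together with Proposition~\ref{prop:orthogonal}, show $\z_1$ is orthogonal to $\z_1$, to $\z_2$, and to every size-$1$ eigenvector, hence to a basis of $V$, contradicting non-degeneracy of $\B$. Therefore $k=3$, which gives the first two sentences of the statement. The same orthogonality bookkeeping shows $\z_1$ has non-trivial pairing only with $\z_3$, so $\B$ restricts non-degenerately to $P:=\operatorname{span}(\z_1,\z_2,\z_3)$; since $P$ is $3$-dimensional, contains a light-like line, and $V$ has only one negative direction, $\B|_P$ has signature $(2,1)$, i.e.\ $P$ is time-like.

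It remains to read off the subspaces. We get $V=P\oplus P^\perp$ orthogonally, with $P^\perp$ space-like, $\phi$-invariant, $(n-3)$-dimensional, and $\phi|_{P^\perp}$ a Euclidean isometry, hence semisimple. The span $U_\phi$ of all eigenvectors of $\phi$ then equals $\mathbb{R}\z_1\oplus P^\perp$: the right-hand side is contained in $U_\phi$, and projecting any eigenvector of $\phi$ onto the $\phi$-invariant summands $P$ and $P^\perp$ puts its $P$-component into $\ker(\phi-\varepsilon)\cap P=\mathbb{R}\z_1$; in particular $\dim U_\phi=1+(n-3)=n-2$. As $\z_1$ is light-like and orthogonal to the space-like $P^\perp$, every vector of $U_\phi$ has non-negative norm, with equality precisely on $\mathbb{R}\z_1$; hence $U_\phi$ is light-like and its unique light-like line is $\mathbb{R}\z_1$, the $\varepsilon$-eigendirection at the bottom of the size-$3$ block. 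Finally $U_\phi^\perp=(\mathbb{R}\z_1)^\perp\cap(P^\perp)^\perp=(\mathbb{R}\z_1)^\perp\cap P=\operatorname{span}(\z_1,\z_2)$, the last equality because $\B(\z_1,\z_3)\ne 0$ while $\B(\z_1,\z_1)=\B(\z_1,\z_2)=0$; on this plane $\phi$ acts by $\phi(\z_1)=\varepsilon\z_1$ and $\phi(\z_2)=\varepsilon\z_2+\z_1$, i.e.\ as a single size-$2$ Jordan block, so the minimal $f$ with $f(\phi)$ annihilating $U_\phi^\perp$ is $(x-\varepsilon)^2$.

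The step I expect to be the main obstacle is excluding a nontrivial block of size $2$ and proving the nontrivial block is unique: in both places the dimension bound on totally isotropic subspaces alone does not suffice (a light-like line is permitted), and one must play $\phi$-invariance of $\B$ off against its non-degeneracy, taking care that the size-$1$ blocks with non-real eigenvalues — which occupy $2$-dimensional real $\phi$-invariant planes — are correctly accounted for in the basis. The rest is the elementary Jordan-chain identity of the first paragraph and the standard behaviour of orthogonal complements of non-degenerate subspaces.
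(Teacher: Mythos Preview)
The paper does not supply its own proof of this proposition; it is quoted from Riesz~\cite[Section~3.10]{riesz_clifford_1958} and used as input. So there is nothing in the paper to compare against directly.

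That said, your argument is correct and self-contained, and it is essentially the classical one: exploit the invariance identity $\B(\z_i,\z_j)=\B(\phi\z_i,\phi\z_j)$ along a Jordan chain to force $\B(\z_i,\z_j)=0$ whenever $i+j\le k$, and then let the Lorentzian bound on totally isotropic subspaces (Proposition~\ref{prop:twolights}) and the non-degeneracy of $\B$ eliminate all Jordan shapes except a single size-$3$ block for $\varepsilon=\pm 1$ plus semisimple remainder. Your exclusion of a size-$2$ block (showing $\z_1$ would then be in the radical of $\B$) and your uniqueness argument (producing a $2$-dimensional totally isotropic span from two distinct bottom vectors) are exactly the two places where the isotropic-dimension bound alone is not enough, and you handle both cleanly. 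The identification $U_\phi=\mathbb{R}\z_1\oplus P^\perp$ via the $\phi$-invariant orthogonal splitting $V=P\oplus P^\perp$, and the computation $U_\phi^\perp=\operatorname{span}(\z_1,\z_2)$ from $\B(\z_1,\z_3)\ne 0$, are straightforward and correct; the relation $\B(\z_2,\z_2)=-\B(\z_1,\z_3)$ that you implicitly use (to see $\B|_P$ is non-degenerate of signature $(2,1)$) drops out of the same chain identity.

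One small remark: when you invoke Proposition~\ref{prop:orthogonal} for a complex unimodular $\lambda\ne\pm 1$ to get $\B(\z,\conjugate\z)=0$, you are tacitly using that $\B$ has been extended \emph{sesquilinearly} to $V_\mathbb{C}$, as in the paper; the condition needed is $\lambda\cdot\overline{\overline{\lambda}}=\lambda^2\ne 1$, which is what you wrote. This is fine, just worth making explicit since a bilinear extension would give the vacuous condition $\lambda\overline{\lambda}=1$.
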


\subsection{Spectral interpretation of limit roots}

Let $(W,S)_B$ be a Lorentzian Coxeter system. In this part, we consider the limit directions arising through sequences in the form of $(w^k)_{k\in\mathbb{N}}$ for some $w\in W$. We say that $w$ is an \Dfn{elliptic} (resp.~\Dfn{parabolic, hyperbolic}) element of $W$ if its corresponding transformation $\rho(w)$ is an elliptic (resp.~parabolic, hyperbolic) transformation.

\begin{theorem}\label{thm:elliptic}
  An element $w$ of a Lorentzian Coxeter system $(W,S)_B$ is of finite order if and only if $w$ is an elliptic Lorentz transformation.
\end{theorem}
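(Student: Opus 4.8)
The plan is to prove both implications by relating finiteness of the order of $w$ to the nature of the eigenvalues of $\rho(w)$, using the trichotomy of Proposition~\ref{prop:3types}. One direction is nearly immediate: if $\rho(w)$ is elliptic, then by Proposition~\ref{prop:3types} it is diagonalizable over $\mathbb{C}$ with all eigenvalues unimodular. The key point is that the eigenvalues must in fact be \emph{roots of unity}, not merely unimodular. For this I would invoke that $\rho(W)$ is a discrete subgroup of $O_\B(V)$ (stated in the excerpt when the geometric representation is introduced): the cyclic closure $\overline{\langle\rho(w)\rangle}$ is a compact abelian Lie group contained in a discrete group, hence finite, so $\rho(w)$ has finite order; since $\rho$ is faithful, $w$ has finite order. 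Alternatively, and more in the spirit of staying elementary, one can argue that an elliptic transformation preserves a positive-definite form (it is conjugate into a compact group $O(n-1)\times O(1)$), so $\langle \rho(w)\rangle$ is a discrete subgroup of a compact group, hence finite.

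For the converse, suppose $w$ has finite order, say $w^m = e$. Then $\rho(w)^m = \mathrm{Id}$, so the minimal polynomial of $\rho(w)$ divides $x^m - 1$, which has distinct roots; therefore $\rho(w)$ is diagonalizable over $\mathbb{C}$ with all eigenvalues $m$-th roots of unity, in particular unimodular. By the trichotomy of Proposition~\ref{prop:3types}, a Lorentz transformation that is diagonalizable with only unimodular eigenvalues is exactly an elliptic transformation (parabolic ones are non-diagonalizable, hyperbolic ones have a non-unimodular pair). Hence $\rho(w)$, i.e.\ $w$, is elliptic.

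I expect the main (only mild) obstacle to be making the "unimodular eigenvalues of an elliptic element of a \emph{discrete} reflection group are roots of unity" step fully rigorous and self-contained, rather than quoting it. The cleanest route is to note that an elliptic Lorentz transformation $\phi$, being diagonalizable with all eigenvalues on the unit circle and commuting with complex conjugation, is conjugate inside $O_\B(V)$ to an element of the maximal compact subgroup $K \cong O(n-1)\times O(1)$; since $\rho(W)$ is discrete, $\rho(W)\cap gKg^{-1}$ is discrete and relatively compact, hence finite, so $\phi = \rho(w)$ has finite order. Everything else is bookkeeping with minimal polynomials and the already-cited Proposition~\ref{prop:3types}; no delicate estimates are involved.
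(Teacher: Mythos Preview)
Your proof is correct. The direction ``finite order $\Rightarrow$ elliptic'' matches the paper exactly (minimal polynomial divides $x^m-1$). For the converse, you take a genuinely different route. The paper argues by contradiction using its own limit-root machinery: if $w$ were elliptic of infinite order, the orbit $(w^k\cdot\proj\alpha)$ of a simple root would be bounded (unimodular eigenvalues) yet infinite, so $\proj\alpha$ itself would occur as a limit direction arising from a simple root; but by \cite[Theorem~2.7]{hohlweg_asymptotical_2014} such limits lie on the light cone, while roots are space-like---contradiction. Your route is the classical Lie-theoretic one: an elliptic element lies in a conjugate of the maximal compact $K\cong O(n-1)\times O(1)$, and discreteness of $\rho(W)$ then forces $\langle\rho(w)\rangle$ to be finite. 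The one step you gloss over---that an elliptic $w$ stabilizes a time-like line---does need a short argument (in signature $(n-1,1)$ the genuine rotation blocks in the real $w$-invariant orthogonal decomposition must be positive definite, so the negative direction sits in a $\pm1$-eigenspace), but it is standard. Your argument is the more portable one, independent of limit-root theory; the paper's version has the complementary virtue of staying entirely internal to the framework being developed, invoking neither the topological discreteness of $\rho(W)$ in $O_\B(V)$ nor any structure theory of the Lorentz group, only the already-cited fact that limit roots are isotropic.
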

\begin{proof}
  Assume that $w^k=e$ for some $k<\infty$. Since the minimal polynomial of $w$ divides $x^k-1$, its roots are all distinct and unimodular. Hence $w$ is diagonalizable with only unimodular eigenvalues, i.e.\ $w$ is elliptic.

  For the sake of contradiction, assume that $w$ is an elliptic element but of infinite order. Let the sequence $(w^k)_{k\in\mathbb{N}}$ act on a simple root $\alpha\in\Delta$. Since $w$ is diagonalizable and every eigenvalue is unimodular, we conclude that $\proj\alpha$ is itself a limit direction.  However, by \cite[Theorem 2.7]{hohlweg_asymptotical_2014}, limit directions arising from simple roots are on the light cone and therefore can not be a root.
\end{proof}

Consequently, whenever $w$ is of infinite order, it is either parabolic or hyperbolic.  Then Theorem~\ref{thm:main1} follows directly from the fact that the set of limit roots equals to the limit set of the Coxeter group regarded as a Kleinian group acting on the hyperbolic space~\cite[Theorem~1.1]{hohlweg_limit_2013}.  For the relation between fixed points and limit sets of Kleinian groups, see for instance~\cite[Lemma~2.4.1(ii)]{marden_outer_2007}.  

However, we provide here a different proof for the following reasons. Firstly, in the proof of Theorem~\ref{thm:parabolic} and~\ref{thm:hyperbolic}, the behavior of infinite-order elements is analysed in detail.  This will be useful in the proof of Theorem~\ref{thm:main2}. Secondly, our proof is independent of~\cite[Theorem~1.1]{hohlweg_limit_2013}.  In fact, as shown by Theorem~\ref{thm:main2} and Example~\ref{expl:nonLorentz}, limit directions arising from a space-like and a time-like base point are not necessarily the same.  So the equality between the set of limit roots and the limit set of Coxeter group is not trivial.  As mentioned in the introduction, our proof tries to rely minimally on hyperbolic geometry, in the hope of a deeper insight and further generalizations to non-Lorentzian infinite Coxeter systems.  In particular, concepts involved in the statement of Theorem~\ref{thm:main1} are all well defined for general infinite Coxeter systems.  The possibility of a generalization is discussed in detail in Section~\ref{ssec:nonLorentz}.

For a Lorentzian Coxeter group $W$, denote by $W_\infty$ the set of elements of infinite order, $W_\para$ the set of parabolic elements, and $W_\hyper$ the set of hyperbolic elements. Then
\[
  W_\infty=W_\para\sqcup W_\hyper.
\]
Given an element $w\in W_\infty$, the $(n-2)$-dimensional \emph{real} subspace spanned by unimodular eigenvectors of $w$ is called the \Dfn{unimodular subspace} of $w$, and denoted by $U_w$.

\begin{theorem}\label{thm:parabolic}
  The light-like eigendirection $\proj\x$ of a parabolic element $w\in W_\para$ is a limit root of~$W$.
\end{theorem}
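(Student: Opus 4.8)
The plan is to apply a parabolic element $w$ repeatedly to a well-chosen base point and show that the orbit direction converges to the unique light-like eigendirection $\proj\x$. By Proposition~\ref{prop:parabolic}, the Jordan form of $\rho(w)$ has a single size-$3$ block for the eigenvalue $\varepsilon=\pm 1$, together with unimodular blocks on the complementary light-like subspace $U_w$; the light-like line inside $U_w$ is the $\varepsilon$-eigendirection $\proj\x$. First I would pick a Jordan basis adapted to this structure: a chain $\z, \y, \x$ with $w(\x)=\varepsilon\x$, $w(\y)=\varepsilon\y+\x$, $w(\z)=\varepsilon\z+\y$ spanning $U_w^\perp$ (up to real/imaginary parts), and the remaining unimodular eigenvectors spanning a complement. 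Then compute $w^k$ on this basis: on the size-$3$ block one gets the familiar polynomial growth, $w^k(\z)=\varepsilon^k\z+k\varepsilon^{k-1}\y+\binom{k}{2}\varepsilon^{k-2}\x$, so the dominant term as $k\to\infty$ is the $\binom{k}{2}$-multiple of $\x$. On the unimodular complement the coefficients stay bounded. Hence for \emph{any} base point $\x_0$ having a nonzero $\z$-component, $w^k(\x_0)/\binom{k}{2}\to c\,\x$ with $c\ne 0$, so $w^k\cdot\proj\x_0\to\proj\x$ in $\mathbb{P}V$.

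Next I would arrange that the sequence $(w^k\cdot\proj\x_0)$ is genuinely injective (so that $\proj\x$ really is a limit direction, not just a fixed point): since $w$ has infinite order, $w^k(\x_0)$ and $w^\ell(\x_0)$ are never proportional once $k,\ell$ are large and distinct — if they were, $w^{k-\ell}$ would fix the direction $\proj{w^\ell(\x_0)}$, which would force the $\z$-component of $w^\ell(\x_0)$ to vanish, contradicting our choice. (One can also just choose $\x_0$ with nonzero $\z$-component and nonzero component along some unimodular eigenvector with eigenvalue not a root of unity, which kills proportionality outright.) Taking $\x_0$ to be a simple root $\alpha\in\Delta$ works as long as $\alpha$ has a nonzero $\z$-component; if every simple root lay in $\z^\perp$ plus the unimodular part, the orbit of $\Delta$ would not reach $\proj\x$, but then $\Delta$ would be confined to a proper $w$-invariant subspace, and one shows this cannot happen for a Lorentzian representation — alternatively, start from a time-like point (legitimate by Theorem~\ref{thm:limitroots}(iv) and Corollary~\ref{cor:samelimits}), a generic time-like vector automatically has nonzero $\z$-component since $U_w$ is light-like and $U_w^\perp$ is the only obstruction.

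Finally, having produced an injective sequence $(w^k\cdot\proj\x_0)$ in an orbit converging to $\proj\x$, I conclude $\proj\x\in E_V$ is a limit direction; since $\proj\x\in\proj Q$ is light-like, Theorem~\ref{thm:lightlike} upgrades this to $\proj\x\in E_\Phi$, i.e.\ $\proj\x$ is a limit root. The main obstacle is the injectivity/non-degeneracy bookkeeping: one must certify that the chosen base point is not trapped in a $w$-invariant subspace missing the $\z$-direction and that consecutive orbit points are not accidentally proportional. I expect this to be handled cleanly by starting from a time-like base point (where Corollary~\ref{cor:samelimits} guarantees the resulting limit root is independent of the choice) and using that the light-like subspace $U_w$ together with $U_w^\perp$ does not span $V$, so a time-like vector necessarily has a component outside $U_w^\perp$, which is exactly the $\z$-direction driving the polynomial growth.
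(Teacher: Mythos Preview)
Your approach is essentially the paper's: set up the Jordan chain for the size-$3$ block, iterate $w^k$, observe the $\binom{k}{2}$-coefficient of $\x$ dominates, and pick a simple root outside the degenerate hyperplane as base point. Two small points. First, the chain $\x,\y,\z$ does not span $U_w^\perp$: in fact $\x\in U_w$, $\y\in U_w^\perp\setminus U_w$, and $\z\notin U_w^\perp$, so $U_w^\perp=\mathrm{span}(\x,\y)$ is only $2$-dimensional; the relevant obstruction hyperplane $\{c=0\}$ is $U_w+\mathrm{span}(\y)$, not $U_w^\perp$. Second, the paper's finish is more direct than yours: once you take a simple root $\alpha$ with nonzero top coefficient (which exists because $\Delta$ spans $V$ and the obstruction is a hyperplane), the limit of $(w^k\cdot\proj\alpha)$ is a limit root \emph{by definition}, so the appeal to Theorem~\ref{thm:lightlike} and the time-like alternative are unnecessary detours. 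The paper also notes that convergence to $\proj\x$ already holds under the weaker condition $\y\notin U_w$ (i.e.\ $b\ne 0$ or $c\ne 0$), not just $c\ne 0$; this finer statement is reused later in Theorem~\ref{thm:UinL}.
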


\begin{figure}[!bhtp]
  \includegraphics{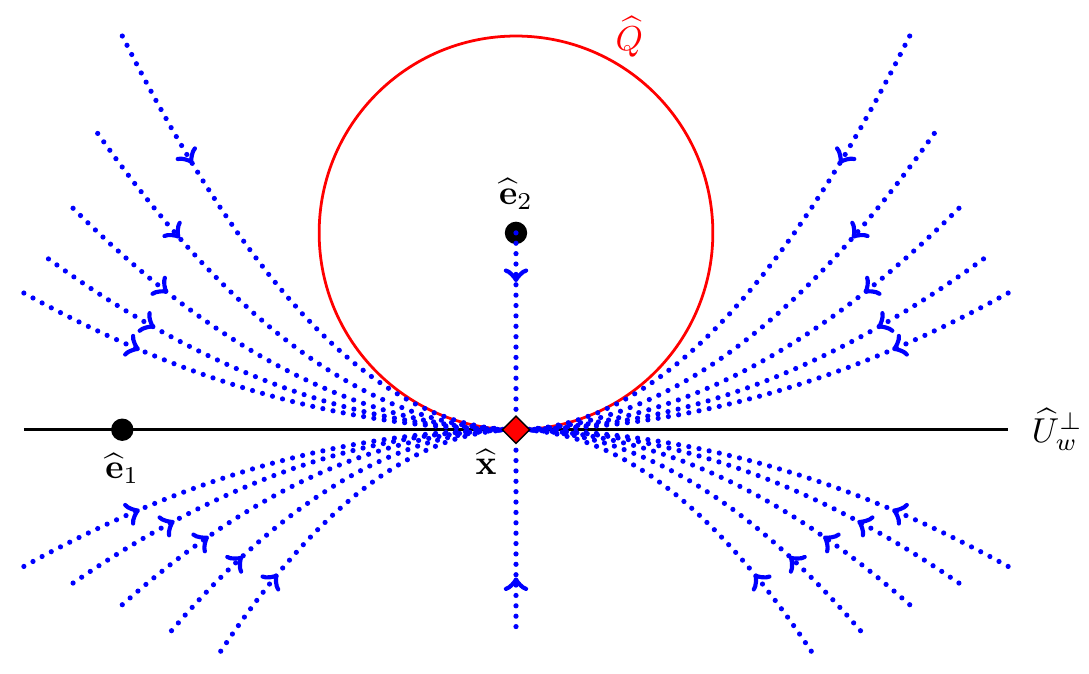}
  \caption{
    The dynamics of a parabolic element as described in the proof of Theorem~\ref{thm:parabolic}.
  }\label{fig:dynamic_para}
\end{figure}

\begin{proof}
  Let $\x$ be the light-like eigenvector of $w$ with eigenvalue $\varepsilon=1$ or $-1$. By Proposition \ref{prop:parabolic}, there are two vectors $\e_1, \e_2\notin U_w$ such that $w(\e_2)-\varepsilon\e_2=\e_1$ and $w(\e_1)-\varepsilon\e_1=\x$.  Note that~$\e_1$ is in $U^\perp_w$, while $\e_2$ is not. Let $\y\in V$ be a real vector, it can be decomposed into
  \begin{equation}\label{eqn:paradecomp}
    \y=a\x+b\e_1+c\e_2+\y^\circ
  \end{equation}
  where $\y^\circ\in U_w$ is orthogonal to $\e_1$, $\e_2$ and $\x$. Under the action of $w^k$, we have
  \begin{equation}\label{eqn:paraconverge}
    w^k(\y)=a_k\x+b_k\e_1+c_k\e_2+w^k(\y^\circ).
  \end{equation}
  where the coefficients
  \begin{align*}
    a_k&=a\varepsilon^k+bk\varepsilon^{k-1}+c\tbinom{k}{2}\varepsilon^{k-2},\\
    b_k&=b\varepsilon^k+ck\varepsilon^{k-1},\\
    c_k&=c\varepsilon^k.
  \end{align*}
  The term $w^k(\y^\circ)$ has bounded norm since $\y^\circ$ is contained in the unimodular subspace $U_w$. As long as $\y\ne U_w$, we have $b\ne 0$ or $c\ne 0$, then $b_k=o(a_k)$ and $c_k=o(b_k)$, i.e.\ $a_k$ dominates $b_k$ and $c_k$, and $b_k$ dominates $c_k$, as $k$ tends to infinity. Consequently, the direction of the sequence~$(w^k(\y))_{k\in\mathbb{N}}$ converges to the direction of $\x$. That is, $\proj\x$ is a limit direction arising from $\proj\y$ through the sequence~$(w^k)_{k\in\mathbb{N}}$.

  Since $\Delta$ spans the vector space $V$, there is a simple root $\alpha\notin U_w$. Using $\alpha$ as the base point, we conclude that $\proj\x$ is a limit root. The dynamics of a parabolic element is illustrated in Figure~\ref{fig:dynamic_para}. 
\end{proof}

\begin{theorem}\label{thm:hyperbolic}
  Let $w\in W_\hyper$ be a hyperbolic element. The two light-like eigendirections of $w$ are limit roots and the projective unimodular subspace $\proj U_w$ is contained in the set of limit directions of~$W$.
\end{theorem}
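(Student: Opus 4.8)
\emph{Proof plan.} By Propositions~\ref{prop:3types} and~\ref{prop:hyperbolic}, the transformation $w$ is diagonalizable: its only non-unimodular eigenvalues form the pair $\lambda>1$, $\lambda^{-1}$, with light-like eigenvectors $\x_+$ and $\x_-$, and the remaining eigenvalues — all unimodular — occur on the $(n-2)$-dimensional space-like subspace $U_w$. Two structural facts will be used throughout. First, $\B$ restricts to a positive-definite form on $U_w$, so $w|_{U_w}$ is orthogonal and $\{w^k(v):k\in\mathbb Z\}$ is bounded for each $v\in U_w$. Second, by Proposition~\ref{prop:orthogonal}, $\x_+$ is $\B$-orthogonal to itself and to every unimodular eigenvector, hence $U_w\subseteq H_{\x_+}$; a dimension count gives $H_{\x_+}=\mathbb R\x_-\oplus U_w$ and, symmetrically, $H_{\x_-}=\mathbb R\x_+\oplus U_w$, so that
\[
  U_w=H_{\x_+}\cap H_{\x_-}.
\]
Geometrically, $\proj U_w$ is the intersection of the tangent hyperplanes to $\proj Q$ at the light-like eigendirections $\proj\x_+$ and $\proj\x_-$.

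\emph{Step 1: the two light-like eigendirections are limit roots.} In the eigenbasis, a vector has a nonzero $\x_+$-coordinate exactly when it is not orthogonal to $\x_+$; as $\Delta$ spans $V$, pick a simple root $\alpha$ with $\B(\alpha,\x_+)\ne0$. Then $w^k(\alpha)=a\lambda^k\x_+ + a'\lambda^{-k}\x_- + b_k$ with $a\ne0$ and $b_k\in U_w$ bounded, so $\proj{w^k(\alpha)}\to\proj\x_+$. The roots $w^k(\alpha)$ are pairwise distinct in $\mathbb P V$: otherwise a power $w^m$ would fix $\proj\alpha$, and diagonalizability of $w$ would force $\alpha$ into the dominant eigenline $\mathbb R\x_+\subseteq Q$, contradicting $\B(\alpha,\alpha)=1$. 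Hence $\proj\x_+\in E_\Phi$, and applying the argument to $w^{-1}$ (hyperbolic, with dominant eigendirection $\proj\x_-$) gives $\proj\x_-\in E_\Phi$.

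\emph{Step 2: $\proj U_w\subseteq E_V$ — the heart of the matter.} Fix $\proj u\in\proj U_w$; one must exhibit an injective sequence inside a single $W$-orbit converging to $\proj u$. Powers of $w$ cannot do this by themselves: the computation above shows $w^{\pm k}$ drags a direction to $\proj\x_\pm$ as soon as it has a nonzero $\x_\pm$-component, and on $\proj U_w$ itself $\langle w\rangle$ acts by the bounded orthogonal map $w|_{U_w}$, whose orbit closures are subtori and, for rank $\geq 5$, miss most of $\proj U_w$. So one must use the whole group. The strategy I would follow is to approximate $\proj u$ by unimodular subspaces of auxiliary hyperbolic elements and then transport dynamics. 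Using Step~1, take injective sequences of roots $\gamma_k\to\proj\x_+$ and $\delta_k\to\proj\x_-$; for $k$ large, $\proj\gamma_k$ and $\proj\delta_k$ sit in small disjoint neighbourhoods of the distinct points $\proj\x_\pm\in\proj Q$, so $[\proj\gamma_k,\proj\delta_k]$ crosses the strictly convex region bounded by $\proj Q$ and, exactly as in the proof of Theorem~\ref{thm:inv_set}, $\sigma_{\gamma_k}$ and $\sigma_{\delta_k}$ generate an infinite dihedral group. Since $\operatorname{span}(\gamma_k,\delta_k)\to\operatorname{span}(\x_+,\x_-)$, which is a time-like plane, $r_k:=\sigma_{\gamma_k}\sigma_{\delta_k}$ is hyperbolic for $k$ large, with $U_{r_k}=H_{\gamma_k}\cap H_{\delta_k}=\operatorname{span}(\gamma_k,\delta_k)^\perp$ fixed pointwise; moreover $U_{r_k}\to\operatorname{span}(\x_+,\x_-)^\perp=U_w$, so one can choose $v_k\in U_{r_k}$ with $\proj{v_k}\to\proj u$. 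One then perturbs $\proj{v_k}$ off its stabilizer using a reflection in $W$ whose mirror meets $\proj U_{r_k}$ transversally, balances the expanding and contracting directions of $r_k$ near $\proj{v_k}$ against it, and extracts by a diagonal argument an injective sequence of $W$-images of one fixed base point converging to $\proj u$.

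\emph{Main obstacle.} The delicate point is precisely this final step: producing infinitely many distinct orbit directions tending to $\proj u$ while keeping their heights bounded (equivalently, staying uniformly inside the space-like region, as in the height estimates behind Theorem~\ref{thm:lightlike}). This requires exploiting that the $W$-action on space-like directions — unlike its action on hyperbolic space — is not properly discontinuous, and it is here that elements outside $\langle w\rangle$ are genuinely needed. Everything else reduces to the linear algebra of the eigendecomposition and height estimates already available.
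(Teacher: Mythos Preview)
Your Step~1 matches the paper apart from a slip: in the eigenbasis, $\B(\cdot,\x_+)$ reads off the $\x_-$-coefficient (up to the nonzero factor $\B(\x_-,\x_+)$), not the $\x_+$-coefficient, so the correct hypothesis for $a\ne0$ is $\alpha\notin H_{\x_-}$. The real gap is Step~2, where your assertion that ``powers of $w$ cannot do this by themselves'' is wrong, and this is exactly what makes the paper's proof short. You analysed two regimes---a base point in $\proj U_w$ (orbit trapped on a subtorus) and a generic base point (swept to $\proj\x_\pm$)---but overlooked the third: a base point on the hyperplane $H_{\x_-}=\mathbb R\x_-\oplus U_w$ yet outside $U_w$. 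Given any target $\y^\circ\in U_w\setminus\{0\}$, set $\y:=\x_-+\y^\circ$; then $w^k(\y)=\lambda^{-k}\x_-+w^k(\y^\circ)$, the first term tends to $0$, and since $w|_{U_w}$ lies in a compact orthogonal group there are $k_j\to\infty$ with $w^{k_j}|_{U_w}\to\mathrm{id}$, so $w^{k_j}(\y)\to\y^\circ$ in $V$ and $w^{k_j}\cdot\proj\y\to\proj{\y^\circ}$. Injectivity of $(w^k\cdot\proj\y)_{k\in\mathbb N}$ holds because $\y$, having nonzero components on $\x_-$ and in $U_w$, is an eigenvector of no power of $w$. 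Thus $\proj{\y^\circ}\in E_V$ for every $\y^\circ$, using only $\langle w\rangle$.

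Your detour via $r_k=\sigma_{\gamma_k}\sigma_{\delta_k}$ is therefore unnecessary, and the ``main obstacle'' you flag is a genuine obstruction to that route: the approximating fixed points $\proj{v_k}\in\proj U_{r_k}$ lie in different $W$-orbits, and turning $\proj{v_k}\to\proj u$ into an injective sequence inside a \emph{single} orbit would essentially amount to knowing that $E_V$ is closed---a question the paper leaves open in Section~\ref{ssec:problem}.
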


\begin{figure}[!htbp]
  \includegraphics{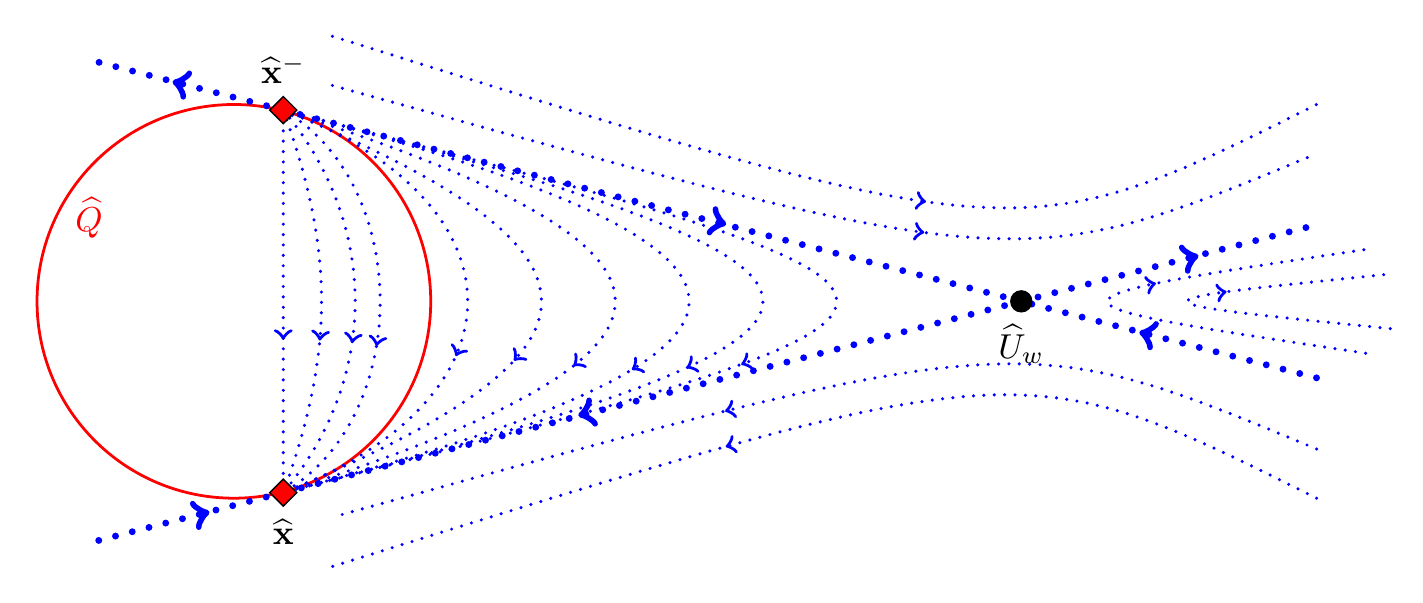}
  \caption{
    The dynamics of a hyperbolic element as described in the proof of Theorem~\ref{thm:hyperbolic}.
  }\label{fig:dynamic_hyper}
\end{figure}

\begin{proof}
  From Proposition~\ref{prop:3types}, the element $w$ possesses a light-like eigenvector $\x$ which is non-unimodular. By replacing $w$ with $w^{-1}$ if necessary, we may assume that the eigenvalue $\lambda$ corresponding to $\x$ is greater than $1$. Let $\x^-$ be an eigenvector of $w$ with eigenvalue $\lambda^{-1}$.  Since~$w$ is diagonalizable, there exists an eigenbasis of $V_\mathbb{C}$ consisting of $\x$, $\x^-$ and $n-2$ unimodular eigenvectors of $w$.  Then any real vector $\y\in V$ can be decomposed into
  \begin{equation}\label{eqn:hyperdecomp}
    \y=a^+\x + a^-\x^- +\y^\circ,
  \end{equation}
  where $\y^\circ\in U_w$ is orthogonal to $\x$ and $\x^-$. By Proposition~\ref{prop:properties}\ref{prop:powers}, we have 
  \[
    w^k(\y) =  a^+\lambda^k\x+a^-\lambda^{-k}\x^-+w^k(\y^\circ)
  \]
  Using $\y$ as the base point, three cases are possible
  \begin{description}
    \item[Case 1] $a^+\ne 0$, so $\y\notin H_{\x^-}$. The coefficient $a^+\lambda^k$ diverges to infinity while the coefficient~$a^-\lambda^{-k}$ tends to 0. Besides, the term $w^k(\y^\circ)$ has a bounded norm since $\y^\circ\in U_w$. Therefore, as $k$ tends to infinity, the direction of the sequence $(w^k(\y))_{k\in\mathbb{N}}$ converges to the direction of $\x$. So $\proj\x$ is a limit direction arising from $\proj\y$ through the sequence $(w^k)_{k\in\mathbb{N}}$.
    \item[Case 2] $a^+=0$ and $\y^\circ\ne 0$, so $\y\in H_{\x^-}$ but $\proj\y\ne\proj\x^-$. Again, the coefficient $a^-\lambda^{-k}$ tends to~$0$, so $\y^\circ\in U_w$ is an accumulation point of the set $\{w^k(\y)\mid k\in \mathbb{N}\}$. Consequently, $\proj\y^\circ\in  U_w$ is a limit direction arising from $\proj\y$ through some subsequence of $(w^k)_{k\in\mathbb{N}}$.
    \item[Case 3] $a^+=0$, $\y^\circ=0$ and $a^-\ne 0$, so $\proj\y=\proj\x^-$.  The sequence of vectors $(w^k(\y))$ converges to $0$, while the direction of $w^k\cdot\proj\y$ remains $\proj\x^-$ for all $k\in\mathbb{N}$. The sequence $(w^k\cdot\proj\x)$ visits only one point in $\mathbb{P}V$, so no limit direction arises.
  \end{description}
  Since $\Delta$ spans the vector space $V$, there is a simple root $\alpha\notin H_{\x^-}$. Using $\alpha$ as the base point, we conclude from Case $1$ that $\proj\x$ is a limit root.  The dynamics of a hyperbolic element is illustrated in Figure~\ref{fig:dynamic_hyper}. 
\end{proof}

For a Lorentzian Coxeter system $(W,S)_B$, let $E_\para$ (resp.~$E_\hyper$) be the set of light-like eigendirections of elements in $W_\para$ (resp.~$W_\hyper$). We now prove the following theorem, which is equivalent to Theorem \ref{thm:main1} since $E_\spec=E_\hyper\sqcup E_\para$. It can be derived from the minimality of $E_\Phi$ under the action of $W$~\cite[Theorem~3.1(b)]{dyer_imaginary2_2013}. We provide here a self-contained proof using the results in Section~\ref{ssec:lightlike}.

\begin{theorem}\label{thm:spectral} 
  For a Lorentzian Coxeter system $(W,S)_B$, the set of light-like eigenvectors $E_\para$ (if not empty) and $E_\hyper$ are dense in the set of limit roots $E_\Phi$.
\end{theorem}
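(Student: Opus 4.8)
The plan is to prove the two reverse inclusions $\closure{E_\hyper}\supseteq E_\Phi$ and (when $E_\para\neq\emptyset$) $\closure{E_\para}\supseteq E_\Phi$; combined with $E_\hyper,E_\para\subseteq E_\Phi$ (Theorems~\ref{thm:hyperbolic} and~\ref{thm:parabolic}) and the closedness of $E_\Phi$ as the accumulation set of $\proj\Phi$, this yields $\closure{E_\hyper}=\closure{E_\para}=E_\Phi$. First I would clear the trivial cases: if $W$ is finite all three sets are empty, so assume $W$ infinite; then $W$ contains an infinite-order element, which by Theorem~\ref{thm:elliptic} is parabolic or hyperbolic, so $E_\Phi\neq\emptyset$. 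Next I would rule out $|E_\Phi|=1$. If $E_\Phi=\{\proj\z\}$, then $W$ fixes $\proj\z$, and $W$ has no hyperbolic element (a hyperbolic element would contribute a second light-like eigendirection, hence a second limit root, by Theorem~\ref{thm:hyperbolic} and Proposition~\ref{prop:hyperbolic}); such a $W$ preserves the horospheres centered at $\proj\z$ and so acts on each as a discrete group of Euclidean isometries, forcing $W$ to be virtually abelian and therefore of affine type — contradicting the signature $(n-1,1)$ of $B$. Hence $|E_\Phi|\geq 2$. I would also record that $E_\hyper$ and $E_\para$ are $W$-invariant, since conjugating a parabolic (resp.\ hyperbolic) element keeps it parabolic (resp.\ hyperbolic) and translates its eigendirections (Proposition~\ref{prop:properties}\ref{prop:groupconj}). (A quicker route uses the minimality of $E_\Phi$ from \cite{dyer_imaginary2_2013}; the self-contained argument below uses instead the strict convexity of $\proj Q$ together with the dynamics of Theorems~\ref{thm:parabolic}--\ref{thm:hyperbolic}, and avoids the technicalities of conjugating a fixed hyperbolic element and tracking a possibly degenerating orbit.)

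For $\closure{E_\hyper}\supseteq E_\Phi$, I would fix a limit root $\proj\z$ and, using $|E_\Phi|\geq 2$, a second limit root $\proj\y\neq\proj\z$. Since $\proj Q$ is projectively a sphere, the open chord joining $\proj\z$ and $\proj\y$ lies in the open ball it bounds, i.e.\ in the time-like region, and this persists after a small perturbation of the endpoints. As $\proj\z$ and $\proj\y$ are accumulation points of $\proj\Phi$, I can therefore pick roots $\gamma_1,\gamma_2\in\Phi$ with $\proj\gamma_1$ arbitrarily close to $\proj\z$ and $\proj\gamma_2$ arbitrarily close to $\proj\y$ such that the $2$-plane $P$ spanned by $\gamma_1$ and $\gamma_2$ is time-like. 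Then $\sigma_{\gamma_1}$ and $\sigma_{\gamma_2}$ fix $P^{\perp}$ pointwise and restrict to distinct reflections of the Lorentzian plane $P$, so $w:=\sigma_{\gamma_1}\sigma_{\gamma_2}\in W$ has infinite order and a real eigenvalue of modulus $>1$; by Theorem~\ref{thm:elliptic} and Proposition~\ref{prop:3types} it is hyperbolic. Its two light-like eigendirections are exactly the two points where the projective line $\proj P$ meets $\proj Q$, hence they lie close to $\proj\z$ and $\proj\y$ respectively. Letting the approximations improve shows $\proj\z\in\closure{E_\hyper}$, and since $\proj\z$ was arbitrary, $E_\Phi\subseteq\closure{E_\hyper}$.

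For $\closure{E_\para}\supseteq E_\Phi$ when $E_\para\neq\emptyset$, I would fix a parabolic $w_1\in W_\para$ with light-like eigendirection $\proj{\p}_1$, a limit root by Theorem~\ref{thm:parabolic}; then $W\cdot\proj{\p}_1\subseteq E_\para$, and since $\closure{W\cdot\proj{\p}_1}$ is closed and $W$-invariant it suffices to reach every limit root $\proj\z$. The case $\proj\z=\proj{\p}_1$ is immediate; otherwise — noting that $E_\para\neq\emptyset$ forces $|E_\Phi|\geq 3$, since by Proposition~\ref{prop:parabolic} a parabolic element has a unique light-like eigendirection and so cannot fix two distinct limit roots — I would apply the construction of the previous paragraph to a pair of limit roots $\proj\z,\proj\y$ with $\proj\y\notin\{\proj\z,\proj{\p}_1\}$, obtaining (after possibly replacing $w$ by $w^{-1}$) a hyperbolic element $w$ whose attracting light-like eigendirection $\proj{\p}$ is arbitrarily close to $\proj\z$ and whose repelling light-like eigendirection is close to $\proj\y$, hence distinct from $\proj{\p}_1$. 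Since two distinct light-like directions are non-orthogonal (Proposition~\ref{prop:twolights}), $\proj{\p}_1$ is not on the orthogonal hyperplane of the repelling eigendirection, so Case~1 in the proof of Theorem~\ref{thm:hyperbolic} gives $w^{k}\cdot\proj{\p}_1\to\proj{\p}$. Thus $\closure{W\cdot\proj{\p}_1}$ accumulates at $\proj\z$, whence $E_\Phi\subseteq\closure{W\cdot\proj{\p}_1}\subseteq\closure{E_\para}$. I expect the main obstacle to be the exclusion of the degenerate case $|E_\Phi|\leq 1$ — equivalently, that a Lorentzian Coxeter group stabilizes no light-like line — which is precisely the point where the non-affine hypothesis is essential; the remainder is a short combination of the dynamics in Theorems~\ref{thm:parabolic}--\ref{thm:hyperbolic} with the strict convexity of $\proj Q$.
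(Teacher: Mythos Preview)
Your proof is correct and takes a genuinely different route from the paper's. The paper first shows $|E_\Phi|\ge 2$ more cheaply than you do (pick a limit root $\proj\x$, choose a simple root $\alpha$ with $\B(\alpha,\x)\ne 0$, and reflect), and then, for a target limit root $\proj\gamma=\lim g_k\cdot\proj\alpha$, it \emph{conjugates} a single fixed $w\in W_\infty$ by the $g_k$ and shows that the resulting eigendirections $g_k\cdot\proj\z\in E_\hyper$ (resp.\ $E_\para$) converge to $\proj\gamma$; this relies on Theorem~\ref{thm:lightlike} and Corollary~\ref{cor:samelimits}, plus an irreducibility argument to guarantee injectivity of $(g_k\cdot\proj\z)$. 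You bypass Theorem~\ref{thm:lightlike} and Corollary~\ref{cor:samelimits} entirely. For $E_\hyper$ you simply rerun the paper's ``two roots near two limit roots give a hyperbolic product'' construction but observe that it already yields \emph{density}, not just nonemptiness, since the light-like eigendirections of $\sigma_{\gamma_1}\sigma_{\gamma_2}$ lie on the chord $[\proj\gamma_1,\proj\gamma_2]$. For $E_\para$ you instead push a single parabolic fixed point toward an arbitrary limit root using the attracting dynamics of a nearby hyperbolic element (Case~1 of Theorem~\ref{thm:hyperbolic}). Your approach is more geometric and avoids the injectivity step; the paper's is more uniform (one argument for both $E_\hyper$ and $E_\para$) and exercises the machinery of Section~\ref{ssec:lightlike}.

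Two minor points. First, your exclusion of $|E_\Phi|=1$ via horospheres and ``virtually abelian $\Rightarrow$ affine type'' is heavier than needed: if $W$ fixed a light-like $\proj\z$, then each $s\in S$ would, and from $s(\z)=\z-2\B(\z,\alpha_s)\alpha_s$ with $\alpha_s$ space-like one gets $\B(\z,\alpha_s)=0$ for all $s$, forcing $\z$ into the radical of $\B$, hence $\z=0$. Second, your claim ``$E_\para\neq\emptyset$ forces $|E_\Phi|\ge 3$'' needs one extra line: a parabolic $w$ cannot \emph{fix} two light-like directions, but you must also rule out that it swaps the two points of a putative $E_\Phi=\{\proj a,\proj b\}$; this is immediate since $w^2$ is still parabolic and would then fix both.
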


\begin{proof}
  We first prove that $E_\hyper$ is not empty. Since the group is infinite, the set of limit root $E_\Phi$ is not empty. Let $\proj\x$ be a limit root, since $\Delta$ spans $V$, there is a simple root $\alpha\in\Delta$ such that $\B(\alpha,\x)\ne 0$, and the reflection in $\alpha$ gives a limit root $\proj\y$ different from $\proj\x$. Take two projective roots $\proj\alpha$ and $\proj\beta$ respectively close to $\proj\x$ and $\proj\y$, such that the segment $[\proj\alpha,\proj\beta]$ intersect the light cone at two points.  Then the product of the reflections in $\alpha$ and $\beta$ give an hyperbolic element in $W$.

  Let $\proj\gamma \in E_\Phi$ be a limit root obtained from an injective sequence $(\proj\gamma_i)_{i\in\mathbb{N}}$ of projective roots. By passing to a subsequence, we may assume that $\proj\gamma$ is obtained from an injective sequence $(g_i(\alpha))_{i\in\mathbb{N}}$, where~$\alpha$ is a fixed simple root in $\Delta$ and $g_i\in W$.

  Let $\proj\z\in E_\hyper$ be a light-like eigendirection of a hyperbolic element $w\in W_\hyper$. By Proposition~\ref{prop:properties}\ref{prop:groupconj}, $g_k\cdot\proj\z$ is a light-like eigendirection of the hyperbolic element $g_kwg_k^{-1}$. So the sequence $(g_k\cdot\proj\z)_{k\in\mathbb{N}}$ is a sequence of limit roots in $E_\hyper$. By compactness, we may assume that $(g_k\cdot\proj\z)$ converges.  By Theorem~\ref{thm:lightlike}, its limit is a limit root.

  The last step consists of applying Corollary~\ref{cor:samelimits} to the sequence $(g_k\cdot\proj\z)_{k\in\mathbb{N}}$ to prove that $\proj\gamma$ is the limit. For this, it remains to prove that the sequence contains infinitely distinct points. The Coxeter graph of a Lorentzian Coxeter system consists of irreducible finite Coxeter graphs and one irreducible Coxeter graph of Lorentzian type. The geometric representations of finite Coxeter group and irreducible Lorentzian Coxeter groups act irreducibly on the space, see \cite[Proposition~6.3]{humphreys_reflection_1992} and \cite[Lemma~14]{vinberg_discrete_1971}. Using this irreducible action, we can find a basis $\b_1,\dots,\b_n$ of~$V$ in the orbit $W\cdot \proj\z$. By the injectivity of the sequence $g_i$, there exists a basis vector $\b_i$, such that $g_k\cdot \proj\b_i$ is injective. Taking this $\b_i$ as the vector $\z$ above finishes the proof.

  The same arguments work, mutatis mutandis, for parabolic elements.
\end{proof}

\begin{figure}[!hbt]
  \centering
  \scalebox{0.86}{
    \begin{subfigure}[b]{.45\textwidth}
      \centering
      \includegraphics{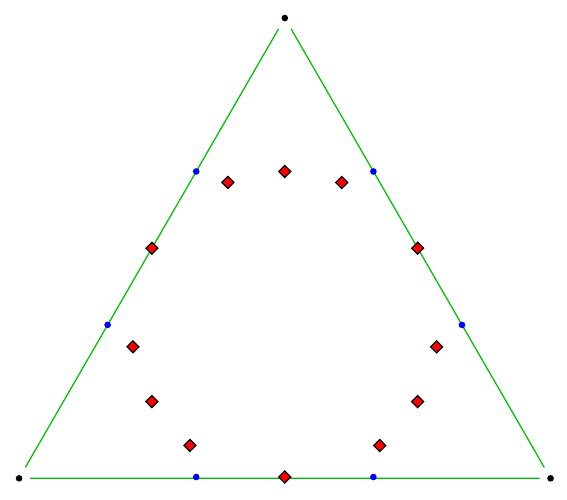}
      \caption{
	The 12 light-like eigendirections of parabolic elements of length smaller or equal to $6$.
      }\label{fig:projroots}
    \end{subfigure}
    \qquad
    \begin{subfigure}[b]{.45\textwidth}
      \centering
      \includegraphics{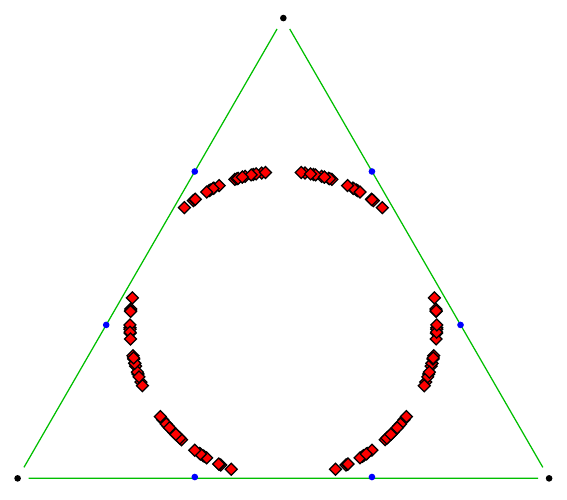}
      \caption{
	The 126 light-like eigendirections of hyperbolic elements of length smaller or equal to $6$.
      }\label{fig:projweights}
    \end{subfigure}
  }
  \caption{
    Light-like eigendirections of some elements of the classic universal geometric Coxeter group of rank 3, seen in the affine space spanned by the simple roots.
  }\label{fig:dense_limitroots}
\end{figure}

Examples of light-like eigendirections of parabolic and hyperbolic elements are illustrated in Figure~\ref{fig:dense_limitroots} for the rank 3 universal Coxeter group with the classical geometric representation. Observe that their number and distribution are quite different.

\subsection{Non-Lorentzian Coxeter systems} \label{ssec:nonLorentz}

The density of $E_\hyper$ in $E_\Phi$ can also be derived from a result of Conze and Guivarc'h~\cite{conze_limit_2000}. In that paper, a linear transformation is said to be \Dfn{proximal} if there exists a \emph{simple} eigenvalue which is strictly greater than any other eigenvalue in absolute value. This is the case for hyperbolic transformations of Lorentzian Coxeter systems. An eigenvector with this eigenvalue is called \Dfn{dominant eigenvector}, so $E_\hyper$ is in fact the set of dominant eigendirections. By \cite[Proposition 2.4]{conze_limit_2000}, $\closure{E_\hyper}$ is the only minimal set of the action of $W$. Then by \cite[Theorem~3.1(b)]{dyer_imaginary2_2013}, this set is $E_\Phi$. 

For a non-Lorentzian infinite Coxeter system $(W,S)$, it remains true that the set of dominant eigendirections is dense in the set of limit roots.  It is proved in~\cite[Section~6.5]{krammer_conjugacy_2009} that an element $w\in W$ is proximal if it is not contained in any proper parabolic subgroup of $W$.

However, an arbitrary element $w$ of infinite order is not necessarily proximal, even if $w$ has non-unimodular eigenvalues. Let $\lambda$ denotes the eigenvalue of $w$ with largest absolute value, it is possible that the geometric multiplicity of $\lambda$ is not $1$.  The real subspace $U$ spanned by $\lambda$-eigenvectors of $w$ is \Dfn{totally isotropic}, meaning that $\B(\x,\y)=0$ for any $\x,\y\in U$. Then any direction $\proj\x$ in $\proj U$ is an isotropic limit direction, but only those in $\convex(\Delta)$ are limit roots. Therefore, Theorem~\ref{thm:main1} and~\ref{thm:lightlike} do not generalize to other infinite Coxeter systems.

\begin{example}\label{expl:nonLorentz}
  In Figure~\ref{fig:nonLorentz}, we show an example inspired by \cite[Example 5.8]{hohlweg_asymptotical_2014} and \cite[Example~7.12 and~9.18]{dyer_imaginary_2013}. It is the Coxeter graph of an irreducible Coxeter group, associated with a non-degenerate bilinear form of signature $(3,2)$. We adopt Vinberg's convention for Coxeter graphs to encode both the Coxeter system $(W,S)$ and the matrix $B$. The element $s_1s_2s_4s_5$ is of infinite order with a simple eigenvalue $1$, and two non-unimodular eigenvalues $7\pm4\sqrt 3$, each with multiplicity~$2$. Therefore, its decomposition into eigenspaces do not fall into any of the types shown in Proposition~~\ref{prop:3types}.  \end{example}

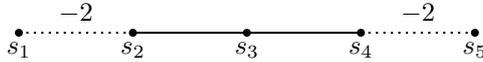
\begin{figure}[!htp]
  \centering
  \begin{tikzpicture}[thick, scale=1.5]
	\coordinate [label=below:$s_1$] (s1) at (-2,0);
	\coordinate [label=below:$s_2$] (s2) at (-1,0);
	\coordinate [label=below:$s_3$] (s3) at (0,0);
	\coordinate [label=below:$s_4$] (s4) at (1,0);
	\coordinate [label=below:$s_5$] (s5) at (2,0);
	
	\foreach \k in {1,...,5} \fill {(s\k)} circle (1pt);
	\draw[dotted] (s1) --node[above]{$-2$} (s2);
	\draw (s2)--(s3)--(s4);
	\draw[dotted] (s4) --node[above]{$-2$} (s5);
\end{tikzpicture}
  \caption{
    The Coxeter graph of a non-Lorentzian irreducible Coxeter group for which the limit roots are not the only limit directions on the isotropic cone.
  }\label{fig:nonLorentz}
\end{figure}

%
%

\section{Coxeter arrangement and limit directions}\label{sec:Arrangement}

In this section, we characterize the set of limit directions of a Lorentzian Coxeter system in terms of the Coxeter arrangement in projective space.

\subsection{Projective Lorentzian Coxeter arrangement}

Let $(W,S)_B$ be a Lorentzian Coxeter system and $(\Phi,\Delta)$ be the associated root system. The hyperplane $H_\gamma$ orthogonal to a positive root $\gamma\in\Phi^+$ is time-like and fixed by the reflection $\sigma_\gamma$. The \Dfn{projective Lorentzian Coxeter arrangement} is the set of projective hyperplanes \[\H=\{\proj H_\gamma\mid\gamma\in\Phi^+\}.\] Clearly, $\H$ is invariant under the action of $W$.

Let $\gamma\in\Phi^+$ be a positive root, the \Dfn{sign map} $\sign_\gamma:\mathbb{P}V\to\{+,-,0\}$ sends $\proj\x\in\mathbb{P}V$ to the sign of $\B(\x,\gamma)$. To every direction $\proj\x\in\mathbb{P}V$ is associated a \Dfn{sign sequence} $(\sign_\gamma(\proj\x))_{\gamma\in\Phi^+}$ indexed by the positive roots. The set of points with a fixed sign sequence is called a \Dfn{cell}. The projective space~$\mathbb{P}V$ is therefore decomposed into cells. The set of cells is denoted by $\Sigma$. For two cells $C,C'\in\Sigma$, if the sign sequence of $C'$ is obtained from the sign sequence of $C$ by changing zero or more (maybe infinitely many) $+$'s or $-$'s to $0$, we say that $C'$ is a \Dfn{face} of $C$, and write $C'\le C$.  This defines a partial order on $\Sigma$. The \Dfn{support} of a cell $C$ is defined as \[ \support(C)=\bigcap_{\substack{\gamma\in\Phi^+\\ \sign_\gamma(C)=0}}\proj H_\gamma \] Cells are open in their supports. The dimension of a cell is defined as the dimension of its support. The codimension of a cell is defined similarly. A cell is said to be \Dfn{space-like} (resp.~\Dfn{light-like}, \Dfn{time-like}) if its support is a projective space-like (resp.~light-like, time-like) subspace. Cells with no $0$ in their sign sequences are called \Dfn{chambers}, they are connected components of the complement $\mathbb{P}V\setminus\cup_{\gamma\in\Phi^+}\proj H_\gamma$. Cells with exactly one $0$ in their sign sequences are called \Dfn{panels}, they are time-like codimensional~$1$ cells. The \Dfn{projective Tits cone} $\T$ is the union of cells whose sign sequences contain finitely many~$-$'s. In the Lorentzian case, the Tits cone is the convex cone over the set of weights $\Omega$, and contains the light cone \cite[Corollary~1.3]{maxwell_sphere_1982}.

\begin{remark}
  A Lorentzian hyperplane arrangement is infinite and not discrete.  Unlike finite hyperplane arrangements, the union of hyperplanes in a Lorentzian hyperplane arrangement is in general not a closed set.
\end{remark}

\subsection{Unimodular subspaces}

Let $\alpha,\beta\in\Phi$ be two positive roots. If the segment $[\proj\alpha,\proj\beta]$ intersect the projective light cone $\proj Q$ transversally (i.e.\ $\B(\alpha,\beta)<-1$), then $\sigma_\alpha\sigma_\beta$ is a hyperbolic transformation. If the segment $[\proj\alpha,\proj\beta]$ is tangent to $\proj Q$ (i.e.\ $\B(\alpha,\beta)=-1$), then $\sigma_\alpha\sigma_\beta$ is a parabolic transformation. In either case, we know from \cite[Section 4]{hohlweg_asymptotical_2014} that the limit roots of the subgroup generated by $\sigma_\alpha$ and $\sigma_\beta$ are the points in $\proj Q\cap[\proj\alpha,\proj\beta]$. By Theorem \ref{thm:hyperbolic}, these are the light-like eigendirections of $\sigma_\alpha\sigma_\beta\in W_\infty$.  The unimodular subspace of $\sigma_\alpha\sigma_\beta$ is clearly $H_\alpha\cap H_\beta$. We define
\[
  \L_\hyper=\bigcup_{\substack{\alpha,\beta\in\Phi^+\\\B(\alpha,\beta)<-1}}\proj H_\alpha\cap\proj H_\beta,\qquad
  \L_\para=\bigcup_{\substack{\alpha,\beta\in\Phi^+\\\B(\alpha,\beta)=-1}}\proj H_\alpha\cap\proj H_\beta,\qquad
\]

Furthermore, we define the unions of projective unimodular subspaces for parabolic, hyperbolic, and infinite-order elements.
\[
  \U_\para=\bigcup_{w\in W_\para}\proj U_w,\qquad
  \U_\hyper=\bigcup_{w\in W_\hyper}\proj U_w,\qquad
  \U=\U_\para\sqcup\U_\hyper=\bigcup_{w\in W_\infty}\proj U_w
\]
We have clearly $\L_\hyper\subset\U_\hyper$ and $\L_\para\subset\U_\para$. The following theorem concerns a reversed inclusion.

\begin{theorem}\label{thm:UinL}
  The projective unimodular subspace~$\proj U_w$ of an element of infinite order $w\in W_\infty$ is included in $\closure{\L_\hyper}$. In other words,
  \[
    \U\subseteq\closure{\L_\hyper}.
  \]
\end{theorem}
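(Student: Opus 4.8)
The plan is to show that the unimodular subspace $\proj U_w$ of any infinite-order element $w$ is a limit (in the Hausdorff sense on subspaces, or pointwise) of intersections $\proj H_\alpha\cap\proj H_\beta$ with $\B(\alpha,\beta)<-1$. The starting observation is that by Theorems~\ref{thm:parabolic} and~\ref{thm:hyperbolic}, the light-like eigendirections of $w$ are limit roots, hence accumulation points of the set of projective roots $\proj\Phi$. So I would first pick two distinct light-like eigendirections to which $\proj U_w$ is ``attached'' in the orthogonal sense. In the hyperbolic case, $w$ has exactly two light-like eigendirections $\proj\x^+$ and $\proj\x^-$ (for eigenvalues $\lambda,\lambda^{-1}$), and $U_w=H_{\x^+}\cap H_{\x^-}$ by the computation already made just before the theorem statement (the unimodular subspace of $\sigma_\alpha\sigma_\beta$ is $H_\alpha\cap H_\beta$, and the same orthogonality argument — Proposition~\ref{prop:orthogonal} — gives $U_w\perp\x^\pm$, so $U_w\subseteq H_{\x^+}\cap H_{\x^-}$, with equality by dimension count since $\proj\x^+\ne\proj\x^-$ span a time-like plane). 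In the parabolic case $w$ has a single light-like eigendirection; here I would instead use that $w$ and $w^{-1}$ share that eigendirection but I need a second orthogonal light-like direction, which forces a slightly different treatment — see the obstacle below.

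Second step: approximate $\proj\x^+$ and $\proj\x^-$ by positive roots. Since both are limit roots, choose injective sequences $\proj\alpha_i\to\proj\x^+$ and $\proj\beta_i\to\proj\x^-$ with $\alpha_i,\beta_i\in\Phi^+$. Because $\proj\x^+$ and $\proj\x^-$ are distinct light-like directions, $\B(\x^+,\x^-)\ne 0$ by Proposition~\ref{prop:twolights}; after normalizing the roots so their heights are positive and bounded away from zero (using the affine chart $\affine(\Delta)$), continuity of $\B$ gives $\B(\alpha_i,\beta_i)\to c\,\B(\x^+,\x^-)$ for a suitable positive constant $c$, and this limit is negative (the segment between two distinct light-like directions crosses the interior of the light cone), so $\B(\alpha_i,\beta_i)<-1$ for all large $i$. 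Thus $\proj H_{\alpha_i}\cap\proj H_{\beta_i}\in\L_\hyper$ for $i\gg 0$.

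Third step: pass to the limit of the intersections. The hyperplane $\proj H_{\alpha_i}$ converges to $\proj H_{\x^+}$ and $\proj H_{\beta_i}$ to $\proj H_{\x^-}$ in the (compact) space of projective hyperplanes, because a hyperplane depends continuously on its normal direction and the normals converge. Transversality is open: since $\proj H_{\x^+}\cap\proj H_{\x^-}$ has codimension $2$ (the normals span a $2$-plane), for large $i$ the intersection $\proj H_{\alpha_i}\cap\proj H_{\beta_i}$ also has codimension $2$ and converges to $\proj H_{\x^+}\cap\proj H_{\x^-}=\proj U_w$ in the Hausdorff metric on the Grassmannian. Hence every point of $\proj U_w$ is a limit of points in $\L_\hyper$, i.e.\ $\proj U_w\subseteq\closure{\L_\hyper}$. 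Taking the union over $w\in W_\infty$ gives $\U\subseteq\closure{\L_\hyper}$.

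The main obstacle is the parabolic case, where $w$ has only one light-like eigendirection $\proj\x$ and one cannot directly write $U_w$ as $H_{\x}\cap H_{\x'}$ for two distinct light-like eigendirections. Here I would argue that a parabolic $w$ is a limit of conjugates of hyperbolic elements, or more directly: by Proposition~\ref{prop:parabolic}, $U_w^\perp$ is annihilated by $(w-\varepsilon)^2$, and $U_w$ is light-like of dimension $n-2$ with the light-like eigendirection $\proj\x\subset U_w$. I can approximate $\proj\x$ by roots $\proj\alpha_i\to\proj\x$; for the ``second'' hyperplane I want roots $\proj\beta_i$ whose orthogonal hyperplanes cut out, in the limit, exactly the remaining $n-3$ dimensions of $U_w$ beyond... — but a single additional hyperplane $H_{\beta}$ with $\B(\alpha,\beta)=-1$ only gives $\L_\para$, not $\L_\hyper$. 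The fix is to perturb: choose $\proj\beta_i$ converging to a light-like direction $\proj\y$ with $\proj\y$ not orthogonal to $\proj\x$ (so $\B(\x,\y)<0$, strictly), chosen so that $H_{\x}\cap H_{\y}\supseteq U_w$ — which requires $\proj\y\in U_w^\perp$; one checks $U_w^\perp$ is light-like and contains light-like directions other than $\proj\x$, and such a $\proj\y$ can be realized as a limit root (it lies in the closure of the limit roots of a suitable rank-$2$ reflection subgroup, or one invokes minimality \cite[Theorem~3.1(b)]{dyer_imaginary2_2013} to say every direction in $\proj Q$ that is a limit root is approximable). Then $H_{\alpha_i}\cap H_{\beta_i}\to H_{\x}\cap H_{\y}$, and $\B(\alpha_i,\beta_i)\to c\,\B(\x,\y)<0$; pushing the roots slightly so that the value drops below $-1$ (possible since $\B(\x,\y)$ can be made as negative as we like by scaling the representative $\y$, i.e.\ by choosing the chart appropriately — more honestly, by choosing $\proj\beta_i$ along a sequence whose $\B$-pairing with $\proj\alpha_i$ is genuinely $<-1$, which is possible because $\proj\x$ is an accumulation point approached from inside the light cone) puts $H_{\alpha_i}\cap H_{\beta_i}$ into $\L_\hyper$. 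Verifying that $H_{\x}\cap H_{\y}=U_w$ (not something larger) and that the required $\proj\y\in U_w^\perp$ is indeed a limit root is the delicate point; I expect the cleanest route is actually to first prove $\U_\para\subseteq\closure{\U_\hyper}$ by a deformation argument on the element $w$ itself (conjugate/perturb a parabolic to a hyperbolic), then combine with the hyperbolic case already handled.
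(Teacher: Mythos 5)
Your hyperbolic case is essentially the paper's argument: approximate the two non-unimodular (light-like) eigendirections by positive roots, observe that the segments between them eventually cross $\proj Q$ transversally so that the pairs lie in the index set of $\L_\hyper$, and pass to the limit of the codimension-$2$ intersections using continuity of the map $(\proj\x,\proj\y)\mapsto \proj H_\x\cap\proj H_\y$ on pairs of distinct light-like directions. One small repair: $\B(\alpha_i,\beta_i)$ for the unit-normalized positive roots does not converge to a finite negative constant $c\,\B(\x^+,\x^-)$ --- the heights of the roots diverge, so the pairing tends to $-\infty$; this is what actually guarantees $\B(\alpha_i,\beta_i)<-1$ eventually (a finite negative limit such as $-1/2$ would not). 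Equivalently, argue geometrically that the segment $[\proj\alpha_i,\proj\beta_i]$ eventually contains time-like points and hence meets $\proj Q$ transversally.

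The parabolic case contains a genuine gap, and the proposed fix rests on a false claim. You want a light-like direction $\proj\y\ne\proj\x$ with $\y\in U_w^\perp$, so that $H_\x\cap H_\y=U_w$. But $U_w^\perp$ is a $2$-dimensional \emph{light-like} plane (it cannot contain a time-like vector $v$, since then $U_w\subseteq H_v$ would be space-like while $U_w$ contains the light-like $\x$), and a light-like plane contains exactly one light-like direction: two linearly independent light-like vectors $\x,\y$ in a plane either satisfy $\B(\x,\y)=0$, forcing $\proj\x=\proj\y$ by Proposition~\ref{prop:twolights}, or $\B(\x,\y)\ne0$, in which case $a\x+b\y$ is time-like for suitable $a,b$. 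So the required $\proj\y$ does not exist, and $U_w$ for parabolic $w$ is \emph{not} of the form $H_\x\cap H_\y$ for two distinct light-like directions. It can only be reached as a degenerate limit in which both light-like directions collapse to $\proj\x$, and then the limit of $H_{\x_k}\cap H_{\y_k}$ depends on the \emph{direction of approach}: the paper constructs two sequences of roots ($w^k\sigma_\alpha\cdot\proj\delta$ and $w^k\sigma_\beta\cdot\proj\delta$, built from the affine stabilizer of $\proj\x$ and a simple root $\delta$ placed transversally) that converge to $\proj\x$ asymptotically tangent to $\proj U_w^\perp$, and shows that the corresponding codimension-$2$ intersections then converge to the orthogonal companion $U_w^{\perp\perp}=U_w$. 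Your fallback suggestion of ``deforming'' a parabolic $w$ into hyperbolic elements faces the same issue --- $W$ is discrete, so one must exhibit actual hyperbolic elements of $W$ whose unimodular subspaces converge to $U_w$, which is precisely the content of this tangency argument. As it stands, your proof establishes $\U_\hyper\subseteq\closure{\L_\hyper}$ but not $\U_\para\subseteq\closure{\L_\hyper}$.
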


\begin{proof}
  Let $\Lambda$ be the continuous map that sends a pair of light-like directions $(\proj\x,\proj\y)\in\proj Q$ to the codimension-$2$ projective subspace $\proj H_\x\cap\proj H_\y$.

  For a hyperbolic element $w\in W_\hyper$, let $(\proj\x,\proj\y)$ be its two non-unimodular eigendirections, then $\Lambda(\proj\x,\proj\y)=\proj U_w$.  Since $\proj\x$ and $\proj\y$ are limit roots, we can find two sequences of roots $(\alpha_k)_{k\in\mathbb{N}}$ and $(\beta_k)_{k\in\mathbb{N}}$ such that $\proj\alpha_k$ converges to $\proj\x_w$ and $\proj\beta_k$ converges to $\proj\y$. The sequence of segments $[\proj\alpha_k,\proj\beta_k]$ eventually intersect the projective light cone $\proj Q$ at two limit roots, say $\proj\x_k$ and $\proj\y_k$. These two limit roots determine a unimodular projective subspace $\proj U_k=\Lambda(\proj\x_k,\proj\y_k)\in\L_\hyper$. The two sequences of limit roots $(\proj\x_k)_{k\in\mathbb{N}}$ and $(\proj\y_k)_{k\in\mathbb{N}}$ converge to $\proj\x$ and $\proj\y$ respectively. By the continuity of $\Lambda$, $\proj U_k$ converges to $\proj U_w$ as $k$ tends to infinity. See Figure~\ref{fig:unimod_hyper} for an illustration.

  \begin{figure}[!htbp]
    \includegraphics{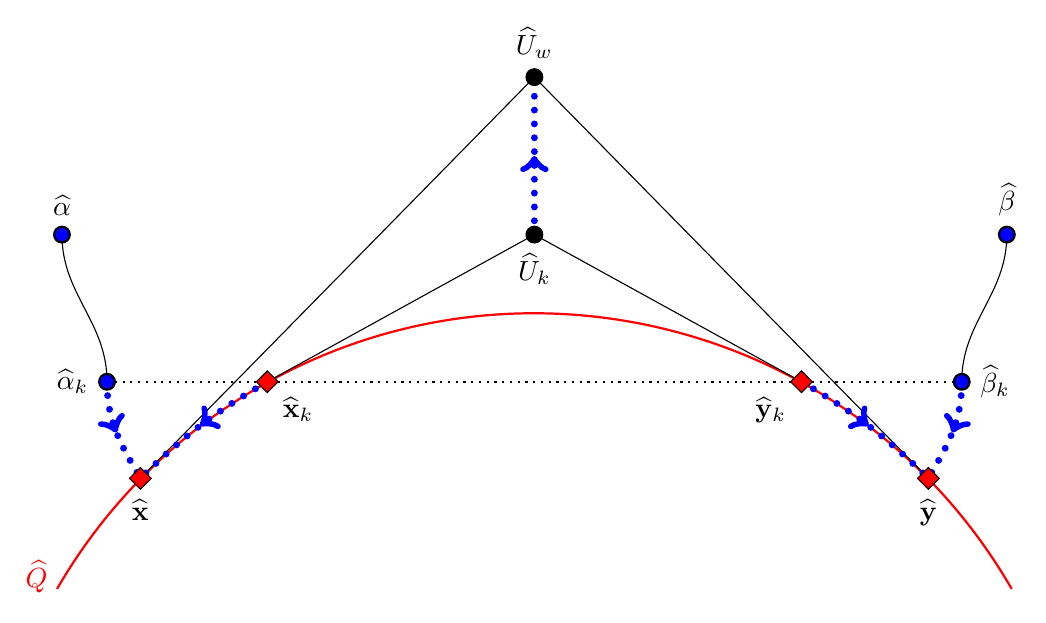}
    \caption{
      Illustration for the proof of Theorem~\ref{thm:UinL} in the case of hyperbolic elements at step $k$.
    }\label{fig:unimod_hyper}
  \end{figure}

  For a parabolic element $w\in W_\para$, let $\proj\x\in\proj U_w$ be the light-like eigendirection of $w$, then $\proj U_w+\proj U^\perp_w=\proj H_\x$ is the codimension-$1$ hyperplane that is tangent to $\proj Q$ at $\proj\x$.  See Figure~\ref{fig:unimod_para} for an illustration. Let $W_\x\ni w$ be the stabilizer subgroup of $\proj\x$.  It is generated by reflections through positive roots~\cite[Lemma~1.10]{dyer_imaginary_2013}
  \[
    \Phi_\x=\{\gamma\in\Phi^+\mid\B(\gamma,\proj\x)=0\}\subset H_\x.
  \]
  Since the restriction of $\B$ on $H_\x$ is positive semi-definite with a radical of dimension 1, $W_\x$ is an irreducible affine Coxeter group, and $\proj\x$ is the only limit root, see~\cite[Corollary~2.16]{hohlweg_asymptotical_2014}. Furthermore, we claim that $\Phi_\x\not\subset U_w$, otherwise we have $U^\perp_w\subseteq U_w$ and $H_\x\subseteq U_w$, which is not possible.

  Since $\proj\x\in\convex(\proj\Phi_\x)$, we can find two positive roots $\alpha,\beta\in\Phi_\x$ with opposite sign for the coefficient $b$ in the decomposition \eqref{eqn:paradecomp}. Recall that, under the action of $(w^k)_{k\in\mathbb{N}}$, the coefficients~$b_k$ dominates $c_k$ in Equation \eqref{eqn:paraconverge} as $k$ tends to infinity. Therefore, in the projective space, as the sequences $(\proj\alpha_k=w^k\cdot\proj\alpha)_{k\in\mathbb{N}}$ and $(\proj\beta_k=w^k\cdot\proj\beta)_{k\in\mathbb{N}}$ converge to $\proj\x$, they are also \Dfn{asymptotically tangent} to $\proj U^\perp_w$ at $\proj\x$, in the sense that $\lim_{k\to\infty}(\proj\alpha_k-\proj\alpha_{k-1})\in\proj U^\perp_w$.

  Since the simple roots $\Delta$ spans $V$ and the Coxeter group $W$ is Lorentzian, there is a simple root $\delta$ such that the segment $[\proj\delta,\proj\x]$ intersects $\proj Q$ at two points. We now construct two sequences of projective roots $(\proj\alpha'_k=w^k\sigma_\alpha\cdot\proj\delta)_{k\in\mathbb{N}}$ and $(\proj\beta'_k=w^k\sigma_\beta\cdot\proj\delta)_{k\in\mathbb{N}}$. Both sequences converge to~$\proj\x$ and are asymptotically tangent to $\proj U^\perp_w$ at $\proj\x$.  The sequence of segments $[\proj\alpha'_k,\proj\beta'_k]$ eventually intersect~$\proj Q$ at two limit roots, say $\proj\x_k$ and $\proj\y_k$, which determine a unimodular projective subspace $U_k=\Lambda(\proj\x_k,\proj\y_k)\in\L_\hyper$. The two sequences of limit roots $(\proj\x_k)_{k\in\mathbb{N}}$ and $(\proj\y_k)_{k\in\mathbb{N}}$ both converge to $\proj\x$, and are asymptotically tangent to~$\proj U^\perp_w$ at $\proj\x$. By the continuity of $\Lambda$, $\proj U_k$ converges to the orthogonal companion of $\proj U^\perp_w$, which is~$\proj U^{\perp\perp}_w=\proj U_w$, as $k$ tends to infinity.

  \begin{figure}[!htbp]
    \includegraphics{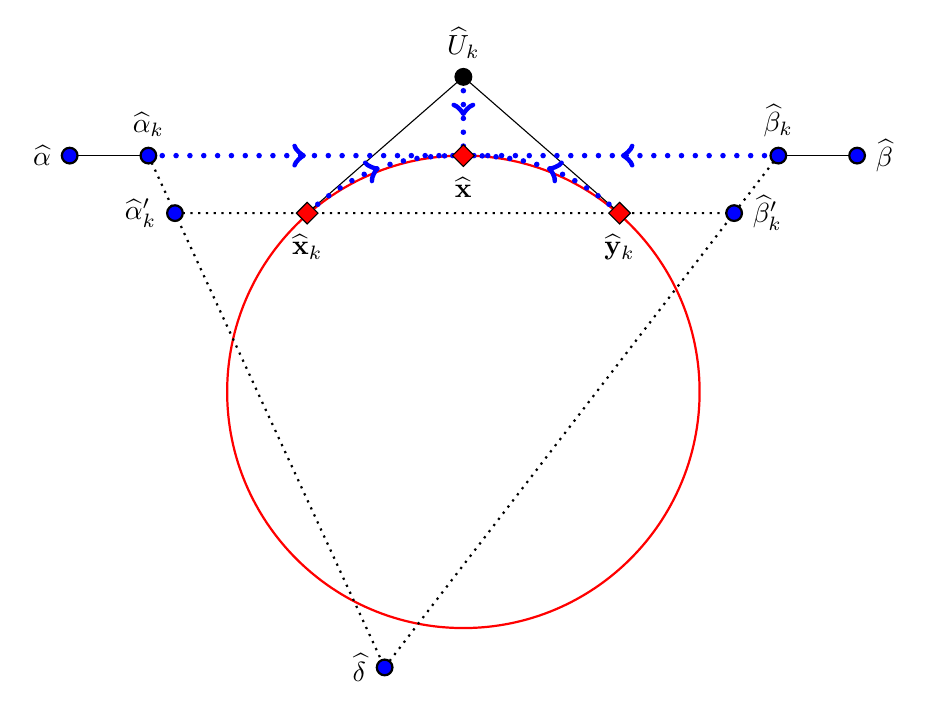}
    \caption{
      Illustration for the proof of Theorem~\ref{thm:UinL} in the case of parabolic elements at step $k$.
    }\label{fig:unimod_para}
  \end{figure}
\end{proof}

\subsection{Limit directions}

By Theorem \ref{thm:hyperbolic}, we have $\L_\hyper\subset \U_\hyper\subset E_V$.  Notably, while limit roots arise from projective roots and projective weights, it is possible for a projective root to be a limit direction, and space-like projective weights are all limit directions. 

In this part, we prove the other inclusion of Theorem \ref{thm:main2}, namely that $E_V\subseteq\closure{\L_\hyper}$. We will need the following two lemmas.

\begin{lemma}[{Selberg's lemma, \cite[Section~7.5]{ratcliffe_foundations_2006}}]\label{lem:selberg}
  Every finitely generated subgroup $G$ of $GL(n,\mathbb{C})$ has a torsion-free normal subgroup of finite index.
\end{lemma}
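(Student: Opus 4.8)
The plan is to use the classical reduction-modulo-$p$ argument: force all torsion of a congruence subgroup into a single prime characteristic, and then eliminate it by intersecting two congruence subgroups attached to different primes.

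First I would pass to an arithmetic setting. Writing $G=\langle g_1,\dots,g_m\rangle$, let $A\subseteq\mathbb{C}$ be the subring generated over $\mathbb{Z}$ by all entries of the matrices $g_i^{\pm1}$. Then $A$ is a finitely generated $\mathbb{Z}$-algebra and an integral domain of characteristic $0$, and $G\le GL_n(A)$ because $\det(g_i)^{-1}=\det(g_i^{-1})\in A$. The next ingredient is the standard fact that for all but finitely many primes $p$ there is a maximal ideal $\mathfrak{m}\subset A$ with $A/\mathfrak{m}$ a finite field of characteristic $p$: the morphism $\mathrm{Spec}\,A\to\mathrm{Spec}\,\mathbb{Z}$ has constructible image (Chevalley) that contains the generic point (as $A$ is a characteristic-$0$ domain), hence is cofinite, so $A/pA\ne0$ for almost all $p$; and whenever $A/pA\ne0$, it is a nonzero finitely generated $\mathbb{F}_p$-algebra, so Zariski's lemma yields a maximal ideal with finite residue field. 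I would then fix two distinct primes $p_1\ne p_2$ and maximal ideals $\mathfrak{m}_1,\mathfrak{m}_2$ with $A/\mathfrak{m}_i$ finite of characteristic $p_i$, let $\pi_i\colon GL_n(A)\to GL_n(A/\mathfrak{m}_i)$ be reduction, and set $\Gamma_i=G\cap\ker\pi_i$ and $\Gamma=\Gamma_1\cap\Gamma_2$. Each $\Gamma_i$ is normal of finite index in $G$ since $A/\mathfrak{m}_i$ is finite, hence so is $\Gamma$.

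It remains to show $\Gamma$ is torsion-free. Suppose $g\in\Gamma$ has finite order greater than $1$; replacing $g$ by a power, assume its order is a prime $\ell$. In characteristic $0$ a finite-order element is diagonalizable with root-of-unity eigenvalues, and since $g\ne I$ some eigenvalue is a primitive $\ell$-th root of unity $\zeta$. I would work in the ring $A':=A[\zeta]$, which is integral over $A$ and contains every eigenvalue of every power of $g$. Choose a maximal ideal $\mathfrak{m}_1'$ of $A'$ lying over $\mathfrak{m}_1$, so $A'/\mathfrak{m}_1'$ is finite of characteristic $p_1$. For each $j$, $g^j\in\Gamma_1\le\ker\pi_1$, so $g^j\equiv I$ modulo $\mathfrak{m}_1'$ and its characteristic polynomial is $\equiv(x-1)^n$ there; as $\zeta^j$ is an eigenvalue of $g^j$, reduction forces $\zeta^j-1\in\mathfrak{m}_1'$ for $j=1,\dots,\ell-1$. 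Taking the product, $\pm\ell=\prod_{j=1}^{\ell-1}(\zeta^j-1)=\pm\Phi_\ell(1)$ lies in $\mathfrak{m}_1'$, whence $\ell\in\mathfrak{m}_1'\cap\mathbb{Z}=p_1\mathbb{Z}$ and $\ell=p_1$. The identical argument with $\mathfrak{m}_2$ gives $\ell=p_2$, a contradiction. Hence $\Gamma$ has no element of prime order, so it is torsion-free, and it is the required torsion-free normal subgroup of finite index.

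The step I expect to be the genuine obstacle is the number-theoretic input: exhibiting maximal ideals of $A$ with finite residue fields over at least two rational primes. This is exactly the assertion that a finitely generated $\mathbb{Z}$-algebra which is a domain of characteristic $0$ is a Jacobson ring with sufficiently many finite residue fields, and it rests on Chevalley's theorem on constructible images together with Zariski's lemma (the Nullstellensatz). Everything else — the reduction homomorphisms, finiteness of index and normality, and the cyclotomic identity $\prod_{j=1}^{\ell-1}(\zeta^j-1)=\pm\ell$ for $\ell$ prime — is elementary.
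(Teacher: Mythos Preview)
The paper does not give its own proof of this lemma; it is quoted from Ratcliffe and used as a black box in the proof of Theorem~\ref{thm:EinL}. So there is nothing in the paper to compare against.

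Your argument is correct and is one of the standard proofs. A small remark on the eigenvalue step: from $g\equiv I\pmod{\mathfrak{m}_1'}$ you get that the characteristic polynomial of $g$ reduces to $(x-1)^n$ in the field $A'/\mathfrak{m}_1'$, and since $\zeta$ is a root of this characteristic polynomial in $A'$, reduction gives $(\bar\zeta-1)^n=0$, hence $\zeta-1\in\mathfrak{m}_1'$; the detour through the powers $g^j$ is then unnecessary, though harmless. The two-prime trick you use is a clean way to dispose of torsion without a separate $p$-torsion analysis; the version in Ratcliffe's Section~7.5 instead works with a single prime and rules out $p$-torsion by a valuation/binomial argument. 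Both routes rest on the same commutative-algebra input you flagged: the existence of maximal ideals of a finitely generated characteristic-zero domain with finite residue fields of prescribed characteristic.
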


\begin{lemma}\label{lem:finitind}
  Let $G$ be a group acting on a vector space $X$, and $H$ be a subgroup of $G$ of finite index. Then the set of limit points of $H$ is equal to the set of
  limit points of $G$. 
\end{lemma}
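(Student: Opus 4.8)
The plan is to prove the two inclusions of limit-point sets separately, the inclusion $\text{Lim}(H)\subseteq\text{Lim}(G)$ being immediate and the reverse being the substantive direction. Since $H\subseteq G$, any injective sequence $(h_k(x_0))_{k\in\mathbb{N}}$ witnessing a limit point of $H$ is also a sequence in an orbit of $G$, so $\text{Lim}(H)\subseteq\text{Lim}(G)$ with nothing to prove. For the converse, suppose $x\in\text{Lim}(G)$, so there is a base point $x_0\in X$ and elements $(g_k)_{k\in\mathbb{N}}$ in $G$ with $(g_k(x_0))_{k\in\mathbb{N}}$ injective and $g_k(x_0)\to x$. The idea is to replace the $g_k$ by elements of $H$ using the coset decomposition: since $[G:H]=m<\infty$, fix coset representatives $r_1,\dots,r_m$ of $G/H$, so each $g_k$ can be written $g_k=h_k r_{j(k)}$ with $h_k\in H$ and $j(k)\in\{1,\dots,m\}$.

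By the pigeonhole principle, some index $j\in\{1,\dots,m\}$ satisfies $j(k)=j$ for infinitely many $k$; passing to this subsequence, we may assume $g_k=h_k r_j$ for a single fixed representative $r=r_j$ and all $k$. Now set $y_0=r(x_0)\in X$, a new base point, and observe that $h_k(y_0)=h_k r(x_0)=g_k(x_0)$. Hence the sequence $(h_k(y_0))_{k\in\mathbb{N}}$ is a subsequence of $(g_k(x_0))_{k\in\mathbb{N}}$: it is therefore still injective, still converges to $x$, and lies entirely in the $H$-orbit $H\cdot y_0$. This exhibits $x$ as a limit point of $H$ arising from the base point $y_0$ through the sequence $(h_k)_{k\in\mathbb{N}}$, so $\text{Lim}(G)\subseteq\text{Lim}(H)$, completing the proof.

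I do not expect a serious obstacle here: the only point requiring a little care is that passing to an infinite subsequence preserves both injectivity and convergence to the same limit, which is standard, and that the chosen base point $y_0$ depends only on the fixed representative $r$ and not on $k$, so the orbit $H\cdot y_0$ is well defined. One should also note the (trivial) hypothesis that $G$ acts on $X$ — here "vector space $X$" is only used insofar as it carries a topology in which limit points make sense; no linear structure is actually needed — so the lemma applies verbatim to $W$ acting on $\mathbb{P}V$, as it will be used via Selberg's lemma (Lemma~\ref{lem:selberg}) to pass to a torsion-free finite-index subgroup.
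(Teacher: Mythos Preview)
Your proof is correct and follows essentially the same approach as the paper: both arguments pass to a subsequence lying in a single right coset $Hr$ (the paper writes $g_k=h_kg$ for a fixed $g$), then use $r(x_0)$ as the new base point for $H$. The only cosmetic slip is that you write ``coset representatives of $G/H$'' while your decomposition $g_k=h_k r_{j(k)}$ is the \emph{right} coset decomposition $H\backslash G$; since the index is finite this is harmless, but you may want to adjust the wording.
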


\begin{proof}
  A limit point of $H$ is a limit point of $G$. Conversely, let $x\in X$ be a limit point of $G$ arising from a base point $x_0\in X$ through the sequence $(g_k)_{k\in\mathbb{N}}\in G$. Since $H$ is of finite index, by passing to a subsequence if necessary, we may assume that the sequence $(g_k)_{k\in\mathbb{N}}$ is contained in a single coset of $H$. That is, there is a sequence $(h_k)_{k\in\mathbb{N}}\in H$ and a fixed element $g\in G$ such that $g_k=h_kg$ for all $k\in\mathbb{N}$. Then $x$ is a limit point of $H$ arising from the point $g(x_0)$ through the sequence $(h_k)_{k\in\mathbb{N}}$.
\end{proof}

\begin{theorem}\label{thm:EinL}
  The set of limit direction of a Lorentzian Coxeter system is included in $\closure{\L_\hyper}$,
  \[
    E_V\subseteq\closure{\L_\hyper}.
  \]
\end{theorem}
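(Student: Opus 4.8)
The plan is to reduce to a torsion-free group, classify the limit direction $\proj\x$ by the sign of $\B(\proj\x,\proj\x)$, and in each case exhibit $\proj\x$ in $\closure{\U}$, which equals $\closure{\L_\hyper}$ since $\L_\hyper\subseteq\U\subseteq\closure{\L_\hyper}$ by Theorem~\ref{thm:UinL}. By Selberg's Lemma~\ref{lem:selberg} together with Lemma~\ref{lem:finitind}, replacing $W$ by a torsion-free normal subgroup of finite index leaves $E_V$ unchanged and only shrinks $\U$, so I may assume every non-trivial element of $W$ is hyperbolic or parabolic. Let $\proj\x\in E_V$ arise from $\x_0$ through an injective sequence $(w_k)_{k\in\mathbb N}$; from $\B(w_k\cdot\proj\x_0,w_k\cdot\proj\x_0)=\B(\x_0,\x_0)/h(w_k(\x_0))^2$ one reads off that $\B(\proj\x,\proj\x)$ has the same sign as $\B(\x_0,\x_0)$, or is $0$.

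If $\proj\x$ is light-like it is a limit root by Theorem~\ref{thm:lightlike}, and by density of $E_\hyper$ in $E_\Phi$ (Theorem~\ref{thm:spectral}) it suffices to place every light-like eigendirection of a hyperbolic element into $\closure{\U}$. For such an eigendirection $\proj\z$, the dominant one of a hyperbolic $w$, I pick a hyperbolic $w_0$ whose unimodular subspace is not contained in the tangent hyperplane at the recessive eigendirection $\proj\z^-$ (possible because $\bigcup_{v\in W_\hyper}\proj U_v$ spans $V$, by irreducibility of the geometric action); then $w^m w_0 w^{-m}$ is hyperbolic with unimodular subspace $w^m\cdot\proj U_{w_0}$, which by the north--south dynamics of Case~1 of Theorem~\ref{thm:hyperbolic} converges in the Grassmannian, after a subsequence, to a codimension-$2$ subspace containing $\proj\z$; hence $\proj\z\in\closure{\U_\hyper}$. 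If $\proj\x$ is time-like, then $\x_0$ is time-like and $h(w_k(\x_0))$ stays bounded, so the orbit of $\x_0$ is an infinite subset of a compact part of a hyperboloid sheet on which $W$ acts as a discrete, hence properly discontinuous, group of isometries — impossible; so no limit direction is time-like (this can also be quoted from \cite[Theorem~3.3]{hohlweg_limit_2013}).

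The remaining, and main, case is $\proj\x$ space-like, hence $\x_0$ space-like with $w_k(\x_0)$ of constant $\B$-norm; after rescaling $\x_0$ and passing to a subsequence, $v_k:=w_k(\x_0)$ converges \emph{in $V$} to a representative $\x'$ of $\proj\x$. Put $\gamma_k:=w_{k+1}w_k^{-1}$, a non-trivial element (it moves $w_k\cdot\proj\x_0\neq w_{k+1}\cdot\proj\x_0$), hence hyperbolic or parabolic, and note $\gamma_k(v_k)-v_k=w_{k+1}(\x_0)-w_k(\x_0)\to0$ in $V$. If $(\gamma_k)$ has a bounded subsequence, then along it $\gamma_k\to\gamma_\infty$ in $O_\B(V)$ with $\gamma_\infty(\x')=\x'$; by discreteness $\gamma_\infty\in W$ is non-trivial, so hyperbolic or parabolic, and since a parabolic element has all its eigenvectors in a light-like subspace (Proposition~\ref{prop:parabolic}) it cannot fix the space-like $\x'$; thus $\gamma_\infty$ is hyperbolic, and $\proj\x$, being a space-like eigendirection of $\gamma_\infty$, lies in $\proj U_{\gamma_\infty}\subseteq\U_\hyper\subseteq\closure{\L_\hyper}$.

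Otherwise $\gamma_k\to\infty$ in $O_\B(V)$. Passing to a subsequence I may assume all $\gamma_k$ are hyperbolic (resp.\ all parabolic) and that their eigendata converge: the light-like eigendirections $\proj\xi_k^{\pm}$ (resp.\ $\proj\zeta_k$) are limit roots by Theorem~\ref{thm:hyperbolic} (resp.\ Theorem~\ref{thm:parabolic}) and converge to limit roots, while the unimodular subspaces $\proj U_{\gamma_k}\in\U$ converge in the Grassmannian. I would then combine the orthogonality relations of Proposition~\ref{prop:orthogonal} with the explicit action formulas \eqref{eqn:hyperdecomp} (resp.\ \eqref{eqn:paradecomp}--\eqref{eqn:paraconverge}): pairing $\gamma_k(v_k)-v_k\to0$ with the eigenvectors shows the components of $v_k$ transverse to $U_{\gamma_k}$ tend to $0$, so $\operatorname{dist}(\proj v_k,\proj U_{\gamma_k})\to0$; as $\proj v_k\to\proj\x$ and $\proj U_{\gamma_k}\in\U\subseteq\closure{\L_\hyper}$ (Theorem~\ref{thm:UinL}), this gives $\proj\x\in\closure{\U}=\closure{\L_\hyper}$. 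The hard part is making this estimate uniform. One must use discreteness again to rule out a hyperbolic $\gamma_k$ with eigenvalue $\lambda_k\to1$ (which would make $\gamma_k$ converge to an elliptic transformation, hence be eventually constant and non-hyperbolic), and one must handle the degenerate subcases in which $\proj\xi_k^{+},\proj\xi_k^{-}$ — or the flag $\x_k,\e_1^{(k)},\e_2^{(k)}$ of the parabolic canonical form — coalesce: this calls for a careful normalization of the eigenvectors and possibly a further subsequence keeping the relevant two-planes non-degenerate, so that the constants in the orthogonality estimates stay bounded; when $\proj\xi_k^{\pm}$ converge to a single limit root, one instead locates $\proj\x$ on $\lim\proj U_{\gamma_k}$ using the constraint $\B(v_k,v_k)=\B(\x_0,\x_0)$.
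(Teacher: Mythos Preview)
Your approach is genuinely different from the paper's, and the difference is instructive. The paper never tries to control the eigendata of a divergent sequence $(\gamma_k)$; instead it argues by contradiction entirely through the combinatorics of the projective Coxeter arrangement. Assuming $\proj\x\notin\closure{\L_\hyper}$, it looks at the cell $C\in\Sigma$ containing $\proj\x$, observes that $W$ permutes cells, and shows that no $w\in W_\infty$ can satisfy $w\cdot C=C$ (using the dynamics of Theorems~\ref{thm:parabolic}--\ref{thm:hyperbolic}, the Tits cone, and the structure of affine stabilizers). Since $C$ is open in its support, this yields a neighborhood $N$ of $\proj\x$ with $(w\cdot N)\cap N=\varnothing$ for every $w\in W_\infty$, contradicting the Selberg reduction. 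All the analytic uniformity issues you wrestle with simply never arise.

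Your light-like and time-like cases are fine, and in the space-like case your bounded subcase (Case~A) is essentially correct, with one slip: it is \emph{not} true that a parabolic element cannot fix a space-like vector. For $n\ge 4$ the subspace $U_\phi$ of Proposition~\ref{prop:parabolic} is light-like of dimension $n-2\ge 2$ and therefore contains many space-like vectors. This is harmless for your conclusion, since $\proj\x\in\proj U_{\gamma_\infty}\subseteq\U\subseteq\closure{\L_\hyper}$ regardless of whether $\gamma_\infty$ is hyperbolic or parabolic, but the sentence as written is false.

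The genuine gap is Case~B, and you already flag it. Your proposed mechanism to exclude $\lambda_k\to 1$ does not work: the implication ``$\lambda_k\to 1$ forces $\gamma_k$ to converge to an elliptic element'' uses boundedness of $\gamma_k$, but you are precisely in the subcase $\gamma_k\to\infty$. In a discrete subgroup of $O_\B(V)$ one can have hyperbolic $\gamma_k$ with $\lambda_k\to 1$ while $\gamma_k\to\infty$ (the two light-like eigendirections coalesce and the change-of-basis matrices blow up), so nothing prevents the factor $(\lambda_k-1)$ in your pairing estimate from killing the conclusion $a_k^+\to 0$. Likewise, in the parabolic subcase the normalization of the Jordan flag $\x_k,\e_1^{(k)},\e_2^{(k)}$ is exactly where the constants explode, and the final sentence (``one instead locates $\proj\x$ on $\lim\proj U_{\gamma_k}$ using $\B(v_k,v_k)=\B(\x_0,\x_0)$'') is not an argument. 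So as it stands the space-like case is not proved; the paper's cell-structure argument is what closes it, and it is worth learning because it replaces a delicate limiting analysis by a finiteness/invariance statement that is robust.
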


\begin{proof}
  Assume that some $\proj\x\notin\closure{\L_\hyper}$ is a limit direction. By Selberg's lemma, there exists a subgroup of $W$ of finite index whose only element of finite order is the identity. By Lemma~\ref{lem:finitind}, the limit direction $\proj\x$ arises through a sequence of infinite-order elements. By the definition of limit point, for any neighborhood $N$ of $\proj\x$, there is infinitely many elements $w\in W_\infty$ such that $(w\cdot N)\cap N\ne\varnothing$.

  We claim that $\proj\x$ is not in the Tits cone~$\T$. If $\proj\x$ is in the interior of $\T$, there is a neighborhood~$N$ of $\proj\x$ such that $(w\cdot N)\cap N=\varnothing$ for any element $w\in W_\infty$, see \cite[Exercise 2.90]{abramenko_buildings_2008}. If $\proj\x$ is on the boundary of $\T$, the stabilizer of $\proj\x$ is infinite, so there is an element $w\in W_\infty$ such that $\proj\x\in\proj U_w\subset\closure{\L_\hyper}$, contradicting our assumption. Let $C\in\Sigma$ be the cell of $\H$ containing $\proj\x$. Since $\proj\x\notin\closure{\L_\hyper}$, $C$ does not intersect $U_w$ for any $w\in W_\infty$. We now prove that $(w\cdot C)\cap C=\varnothing$ for any element $w\in W_\infty$. 

  Assume that $w\in W_\infty$ is an element of infinite order such that $(w\cdot C)\cap C\ne\varnothing$. Since $\H$ is invariant under the action of $W$, we must have $w^k\cdot C=C$ for any integer $k\in\mathbb{Z}$. Since $C\cap\proj U_w=\varnothing$, some vertices of $C$ is not in $\proj U_w$. Let $v\in\closure{C}$ be such a vertex. From the proof of Theorem~\ref{thm:parabolic} and~\ref{thm:hyperbolic}, we see that the sequence $(w^k(v))_{k\in\mathbb{N}}$ converges to a light-like eigendirection of $w$. In order for~$C$ to be invariant under the action of $w$, the vertex $v$ must be a light-like eigendirection of $w$. The element~$w$ must be hyperbolic, otherwise if $w$ is parabolic, we have $C\subset\proj U_w\subset\closure{\L_\hyper}$. Furthermore,~$v$ is the only vertex of $C$ that is not in $\proj U_w$. Otherwise, let $u\notin\proj U_w$ be another vertex of $C$, then the segment $[u,v]$ is inside the light cone $\proj Q$, and $\proj\x\in C$ is in the interior of the Tits cone $\T$. Therefore, the cell $C\subset H_v$ is light-like. 

  Let $W_C$ be the stabilizer subgroup of $C$. It is generated by reflections in the positive roots $\Phi_C=\{\gamma\in\Phi^+\mid C\subset H_\gamma\}$.  These roots lie on the orthogonal companion of $\support(C)$, which is tangent to the light cone $\proj Q$ at $v$. The restriction of the bilinear form $\B$ on the subspace spanned by $\Phi_C$ is positive semi-definite, so $W_C$ is an affine Coxeter group. We then conclude that there is an element $w'\in W_\para$ such that $C\in\proj U_{w'}\subset\closure{\L_\hyper}$, contradicting our assumption.

  We have proved that $(w(C))\cap C=\varnothing$ for all $w\in W_\infty$. Since $\proj\x\in C$ and $C$ is open in $\support(C)$, we can find a neighborhood $N$ of $\proj\x$ such that $(w\cdot N)\cap N=\varnothing$ for all $w\in W_\infty$. Therefore, $\proj\x$ can not be a limit direction.
\end{proof}

\subsection{Open problems on limit directions}\label{ssec:problem}

We proved that the set $E_V$ of limit directions is located between the set~$\L_\hyper$ and its closure. In fact, by Theorem \ref{thm:hyperbolic} and Section \ref{thm:UinL}, a stronger result can be obtained:
\[
  \U_\hyper\sqcup E_\Phi\subseteq E_V\subseteq\closure{\U_\hyper}.
\]

\begin{figure}[!htbp]
    \includegraphics{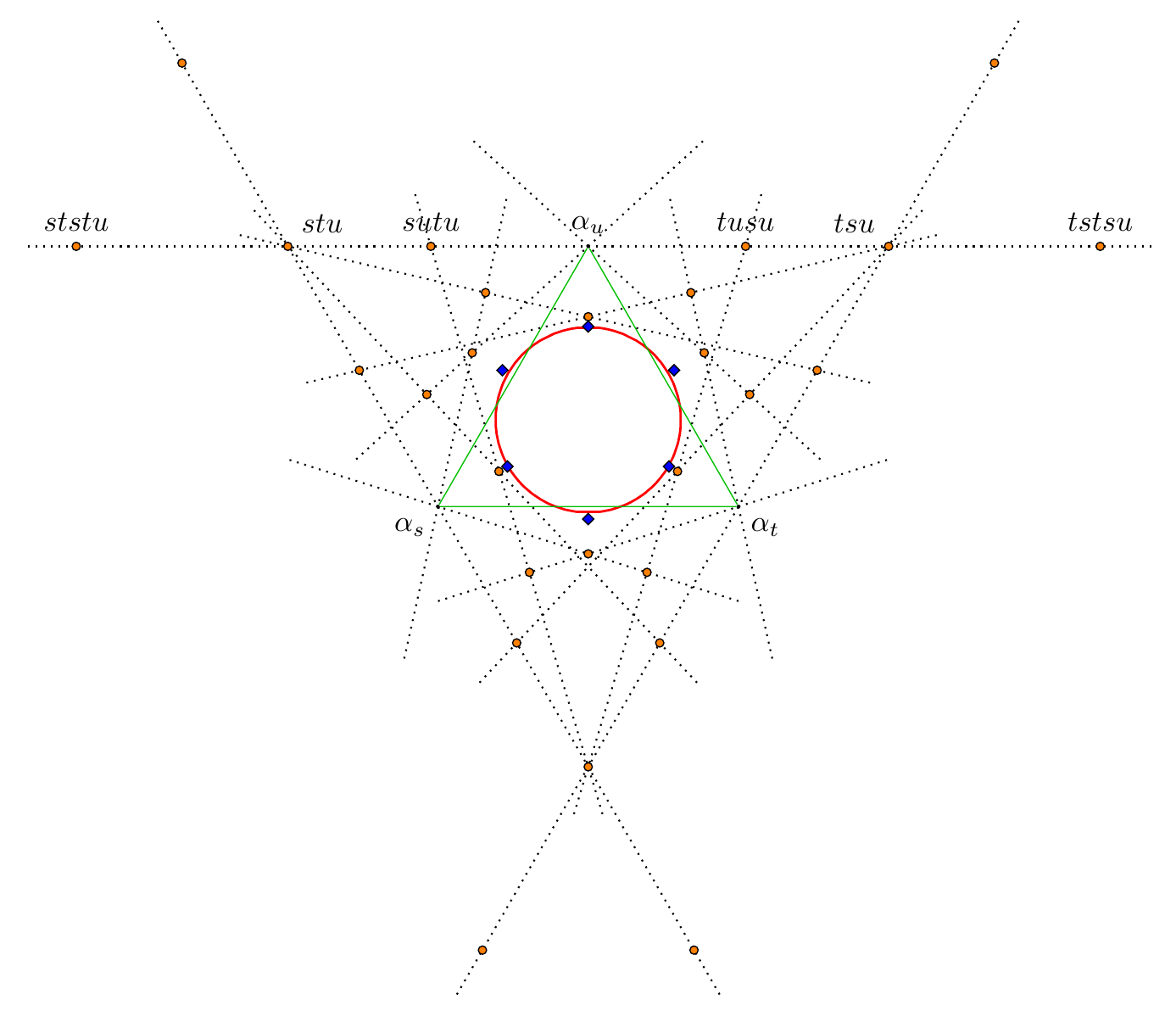}
  \caption{
    The affine representation of some space-like limit directions of the universal Coxeter group of rank $3$.  The associated bilinear form has $c_{ij}=1.1$ whenever $i\ne j$, same as in Figure~\ref{fig:intersection}.  The generators of the Coxeter group are $s$, $t$ and $u$, while $\alpha_s,\alpha_t$ and $\alpha_u$ in the figure are the corresponding simple roots.  The dots are unimodular eigenvectors of hyperbolic elements of length $\le 5$.  The six eigenvectors marked with diamonds are also weights.  Some unimodular eigenspaces are labeled by the corresponding hyperbolic elements. The dotted lines show some linear dependences between limit directions verified with Sage.  
  }\label{fig:limitdirection}
\end{figure}

Figure~\ref{fig:limitdirection} shows some unimodular eigenvectors for infinite order elements for the same geometric Coxeter system as in Figure~\ref{fig:intersection}.  Many of the eigenvectors are not in the intersections of the Coxeter arrangement, but can be approximated by intersections.  Interestingly, the eigenvectors seem to obey certain linear dependences which are not present in the hyperplane arrangement. For example, the unimodular eigenspaces of the elements $ststu, stu, sutu, tusu, tsu$ and $tstsu$ lie on a hyperplane which is not a reflecting hyperplane. It would be interesting to study these linear dependences of unimodular eigenvectors in relation with the structure of the Coxeter group.

\begin{problem}
  Prove or disprove the following equalities.
  \begin{align}
    E_V&=\U_\hyper\sqcup E_\Phi\label{eqn:equality1},\\
    E_V&=\closure{\U_\hyper}\label{eqn:equality2}.
  \end{align}
\end{problem}

Only one of the equality may be true. In fact, in the case where $E_\Phi=\proj Q$ (see for instance Figure~5 and Figure~8 of \cite{hohlweg_asymptotical_2014} and Figure~\ref{fig:dense_limitroots} of the present paper), $\U_\hyper$ is the union of countably many codimension-$2$ subspaces, while $\closure{\U_\hyper}$ consists of all space-like and light-like directions, so~$\U_\hyper$ is a proper subset of $\closure{\U_\hyper}$. Then Equation~\eqref{eqn:equality2} would imply counterintuitively that every non-time-like direction is a limit direction.

In the boundary $\partial(\U_\hyper)=\closure{\U_\hyper}\setminus\U_\hyper$, we know that the limit roots $E_\Phi\subset\partial(\U_\hyper)$ are limit directions. Since every limit direction in $\U_\hyper$ arises through a sequence of the form $(w^k)_{k\in\mathbb{N}}$ for $w\in W_\infty$, Equation \eqref{eqn:equality1} is equivalent to the following conjecture

\begin{conjecture}
  Every space-like limit direction arises through a sequence of the form $(w^k)_{k\in\mathbb{N}}$ for $w\in W_\infty$.
\end{conjecture}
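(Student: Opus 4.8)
Since the light-like limit directions are precisely the limit roots (Theorem~\ref{thm:lightlike}), and since every point of a hyperbolic unimodular subspace $\proj U_w$ is a limit direction realized through a subsequence of $(w^k)_{k\in\mathbb{N}}$ (essentially Case~2 in the proof of Theorem~\ref{thm:hyperbolic}, after checking that the relevant subsequences are injective), the conjecture is equivalent to the inclusion $E_V\subseteq\U_\hyper\sqcup E_\Phi$, and this is what I would try to prove, by refining the argument of Theorem~\ref{thm:EinL}.

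Let $\proj\x$ be a space-like limit direction arising from a base point $\proj\x_0$ through an injective sequence $(w_i)_{i\in\mathbb{N}}$. By Selberg's lemma (Lemma~\ref{lem:selberg}) and Lemma~\ref{lem:finitind} we may assume every $w_i\in W_\infty$; as in the proof of Theorem~\ref{thm:EinL}, $\proj\x$ lies outside the interior of the Tits cone $\T$, hence in a well-defined cell $C\in\Sigma$ of $\H$, and for every neighbourhood $N$ of $\proj\x$ infinitely many $w\in W_\infty$ satisfy $(w\cdot N)\cap N\ne\varnothing$. The heart of the plan is to upgrade this recurrence to the existence of a single $w\in W_\infty$ with $w\cdot C=C$. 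Granting that, the explicit cell-dynamics extracted in the proofs of Theorems~\ref{thm:parabolic} and~\ref{thm:hyperbolic} --- the domination of coefficients controlling \eqref{eqn:paraconverge} in the parabolic case, and the attracting/repelling structure in the hyperbolic case --- force $C\subseteq\proj U_w$. If $w$ can be taken hyperbolic we get $\proj\x\in\proj U_w\subseteq\U_\hyper$ and are done; if $w$ is parabolic, then $C$ is light-like, its unique light-like point is the limit root $\proj v\in E_\Phi$, and $\proj\x$ is a space-like direction of $\proj U_w$.

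To close the parabolic case I would prove, separately, that every space-like direction of a parabolic unimodular subspace lies in $\U_\hyper$ --- in fact in $\L_\hyper$. Here I would revisit the construction in the proof of Theorem~\ref{thm:UinL}: the stabilizer subgroup $W_v$ of $\proj v$ is an irreducible affine Coxeter group acting on $H_v$ with $\proj v$ as its only limit root, $\proj U_w\subset\proj H_v$, and sequences of roots of the form $w^k\sigma_\alpha\cdot\proj\delta$ and $w^k\sigma_\beta\cdot\proj\delta$ --- with $\delta$ a simple root for which $[\proj\delta,\proj v]$ crosses $\proj Q$ transversally --- are asymptotically tangent to $\proj U_w^\perp$ at $\proj v$ and cut out codimension-$2$ intersections in $\L_\hyper$ converging to $\proj U_w$. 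The task is to use the affine structure of $W_v$ to tune this family so that, for a prescribed space-like $\proj\z\in\proj U_w$, one of these intersections passes through $\proj\z$ itself.

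I expect the passage from recurrence to the cell invariance $w\cdot C=C$ to be the main obstacle. Outside the Tits cone $W$ does not act properly discontinuously, so the pigeonhole argument that localizes returns to a single group element for proper actions is unavailable; and since $\H$ is not discrete, its cells accumulate near $\proj\x$, so compactness does not bound how many cells are translated close to $\proj\x$. A more refined bookkeeping --- perhaps using $\proj\x\in\closure{\L_\hyper}$ to restrict the relevant reflecting hyperplanes --- might yield the desired element, but this step looks like it needs a genuinely new idea rather than a refinement of Theorem~\ref{thm:EinL}. The parabolic geometric claim above, with its ``on the nose'' (rather than ``in the limit'') requirement, is a secondary difficulty; if it fails, both the conjecture and equality~\eqref{eqn:equality1} should be restated with $\U$ in place of $\U_\hyper$.
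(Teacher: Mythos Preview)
This statement is not proved in the paper: it appears in Section~\ref{ssec:problem} as an open \emph{conjecture}, equivalent to the putative equality $E_V=\U_\hyper\sqcup E_\Phi$. There is therefore no proof in the paper to compare your proposal against.

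Your proposal is not a proof either, and you say as much. You correctly reduce the conjecture to the inclusion $E_V\subseteq\U_\hyper\sqcup E_\Phi$, and you correctly isolate the two genuine gaps. The first --- upgrading ``$(w\cdot N)\cap N\ne\varnothing$ for infinitely many $w\in W_\infty$'' to the existence of a single $w\in W_\infty$ with $w\cdot C=C$ --- is exactly the point at which the argument of Theorem~\ref{thm:EinL} stops short; your diagnosis that proper discontinuity fails outside the Tits cone and that the cell structure is not locally finite there is the reason the paper leaves this open. The second --- showing that every space-like direction in a parabolic $\proj U_w$ already lies in some hyperbolic $\proj U_{w'}$ exactly, not merely in the closure --- is not addressed in the paper at all; Theorem~\ref{thm:UinL} only gives $\U_\para\subseteq\closure{\L_\hyper}$, and your observation that a failure here would force one to replace $\U_\hyper$ by $\U$ in the conjecture is apt.

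In short: your write-up is a sound research plan that matches the paper's understanding of where the difficulty lies, but neither you nor the paper has a proof.
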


To prove Equation~\eqref{eqn:equality2}, it suffices to prove that $E_V$ is closed. Let $E_\x$ be the set of limit directions arising from a fixed base point $\proj\x\in\mathbb{P}V$. Since $E_\x$ is the set of accumulation points of $W\cdot\proj\x$, it is clear that $E_V$ is closed and invariant under the action of $W$. In particular, $E_\x=E_\y$ if $\proj\x$ and $\proj\y$ lie in a same orbit of $W$, i.e.\ $\proj\x=w\cdot\proj\y$ for some $w\in W$. However, the set of limit directions, being the infinite union
\[
  E_V=\bigcup_{\proj\x\in\mathbb{P}V}E_\x=\bigcup_{\proj\x\in\mathbb{P}V/G}E_\x,
\]
may not be closed in general.

Since the set of limit roots $E_\Phi$ is a minimal set of~$W$, we have $E_\Phi\subseteq E_\x$ for all $\proj\x\in\mathbb{P}V$. In Section \ref{sec:Coxeter}, we have seen that $E_\x=E_\Phi$ if $\proj\x$ is a time-like, light-like, projective root or a projective weight. Using the argument in the proof of Theorem \ref{thm:EinL}, we can prove that $E_\x=E_\Phi$ if $\proj\x$ is in the Tits cone $\T$. Define the \emph{catchment set} $F_\Phi=\{\proj\x\in\mathbb{P}V\mid E_\x=E_\Phi\}$ of limit roots.

\begin{problem}
  Is $F_\Phi$ a closed set? 
\end{problem}

%
%

\bibliographystyle{amsalpha}
\bibliography{biblio}

\newcommand{\etalchar}[1]{$^{#1}$}
\providecommand{\bysame}{\leavevmode\hbox to3em{\hrulefill}\thinspace}
\providecommand{\MR}{\relax\ifhmode\unskip\space\fi MR }
\providecommand{\MRhref}[2]{%
  \href{http://www.ams.org/mathscinet-getitem?mr=#1}{#2}
}
\providecommand{\href}[2]{#2}
\begin{thebibliography}{DHR13}

\bibitem[AB08]{abramenko_buildings_2008}
Peter Abramenko and Kenneth~S. Brown, \emph{Buildings}, GTM, vol. 248,
  Springer, New York, 2008.

\bibitem[AVS93]{alekseevskij_geometry_1993}
Dmitri.~V. Alekseevskij, {Ernest}.~B. Vinberg, and Aleksandr.~S. Solodovnikov,
  \emph{Geometry of spaces of constant curvature}, Geometry, {II},
  Encyclopaedia Math. Sci., vol.~29, Springer, Berlin, 1993, pp.~1--138.

\bibitem[Bou68]{bourbaki_elements_1968}
Nicolas Bourbaki, \emph{{G}roupes et alg{\`e}bres de {L}ie. {C}hapitre {4-6}},
  Paris: Hermann, 1968.

\bibitem[Cec08]{cecil_lie_2008}
Thomas~E. Cecil, \emph{Lie sphere geometry}, 2 ed., {Universitext}, Springer,
  New York, 2008.

\bibitem[CG00]{conze_limit_2000}
Jean-Pierre Conze and Yves Guivarc'h, \emph{Limit sets of groups of linear
  transformations}, Sankhy\=a Ser. A \textbf{62} (2000), no.~3, 367--385.

\bibitem[CL14]{chen_lorentzian_2013}
Hao Chen and Jean-Philippe Labb{\'e}, \emph{Lorentzian {C}oxeter groups and
  {B}oyd-{M}axwell {P}ackings}, preprint, {\tt arXiv:1310.8608} (2014), 28 pp.

\bibitem[DHR13]{dyer_imaginary2_2013}
Matthew Dyer, Christophe Hohlweg, and Vivien Ripoll, \emph{Imaginary cones and
  limit roots of infinite {Coxeter} groups}, preprint, {\tt arXiv:1303.6710}
  (March 2013), 63 pp.

\bibitem[Dye11]{dyer_weak_2011}
Matthew Dyer, \emph{On the weak order of {C}oxeter groups}, preprint, {\tt
  arXiv:abs/1108.5557} (August 2011), 37 pp.

\bibitem[Dye13]{dyer_imaginary_2013}
\bysame, \emph{Imaginary cone and reflection subgroups of {C}oxeter groups},
  preprint, {\tt arXiv:1210.5206} (2013), 89 pp.

\bibitem[HLR14]{hohlweg_asymptotical_2014}
Christophe Hohlweg, Jean-{P}hilippe Labb{\'e}, and Vivien Ripoll,
  \emph{Asymptotical behaviour of roots of infinite {C}oxeter groups}, Canad.
  J. Math. \textbf{66} (2014), no.~2, 323--353.

\bibitem[HPR13]{hohlweg_limit_2013}
Christophe Hohlweg, Jean-Philippe Pr{\'e}aux, and Vivien Ripoll, \emph{On the
  limit set of root systems of {C}oxeter groups and {K}leinian groups},
  preprint, {\tt arXiv:1305.0052} (July 2013), 24~pp.

\bibitem[Hum92]{humphreys_reflection_1992}
James~E. Humphreys, \emph{Reflection groups and {C}oxeter groups}, Cambridge
  Studies in Advanced Mathematics. 29 (Cambridge University Press), 1992.

\bibitem[Kra09]{krammer_conjugacy_2009}
Daan Krammer, \emph{The conjugacy problem for {C}oxeter groups}, Group. Geom.
  Dynam. \textbf{3} (2009), no.~1, 71--171.

\bibitem[Lab13]{labbe_polyhedral_2013}
Jean-Philippe Labb\'e, \emph{Polyhedral {C}ombinatorics of {C}oxeter {G}roups},
  Ph.D. thesis, Freie Universit\"at Berlin, July 2013, available at {\tt
  http://www.diss.fu-berlin.de/diss/receive/FUDISS\_thesis\_000000094753},
  pp.~xvi+103.

\bibitem[LP13]{lam_total_2013}
Thomas Lam and Pavlo Pylyavskyy, \emph{Total positivity for loop groups {II}:
  {C}hevalley generators}, Transform. Groups \textbf{18} (2013), no.~1,
  179--231.

\bibitem[LR14]{sagedays}
Jean-Philippe Labb\'e and Vivien Ripoll, \emph{Implementation of geometric
  {C}oxeter systems}, Sage Days 57, The {S}age Development Team, 2014, {\tt
  http://wiki.sagemath.org/days57}.

\bibitem[LT13]{lam_infinite_2013}
Thomas Lam and Anne Thomas, \emph{Infinite reduced words and the {T}its
  boundary of a {C}oxeter group}, preprint, {\tt arXiv:1301.0873} (January
  2013), 26 pp.

\bibitem[Mar07]{marden_outer_2007}
Albert Marden, \emph{Outer circles}, Cambridge University Press, Cambridge,
  2007.

\bibitem[Max82]{maxwell_sphere_1982}
George Maxwell, \emph{Sphere packings and hyperbolic reflection groups}, J.
  Algebra \textbf{79} (1982), no.~1, 78--97.

\bibitem[Pil06]{pilkington_convex_2006}
Annette Pilkington, \emph{Convex geometries on root systems}, Comm. Algebra
  \textbf{34} (2006), no.~9, 3183--3202.

\bibitem[Rat06]{ratcliffe_foundations_2006}
John~G. Ratcliffe, \emph{Foundations of hyperbolic manifolds}, second ed., GTM,
  vol. 149, Springer, New York, 2006.

\bibitem[Rie58]{riesz_clifford_1958}
Marcel Riesz, \emph{Clifford numbers and spinors ({C}hapters {I}--{IV})},
  Lecture Series, No. 38. The Institute for Fluid Dynamics and Applied
  Mathematics, University of Maryland, College Park, Md., 1958.

\bibitem[S{\etalchar{+}}14]{sage}
William~A. Stein et~al., \emph{{S}age {M}athematics {S}oftware ({V}ersion
  6.1.1)}, The {S}age Development Team, 2014, {\tt http://www.sagemath.org}.

\bibitem[SR13]{shafarevich_linear_2013}
Igor~R. Shafarevich and Alexey~O. Remizov, \emph{Linear algebra and geometry},
  Springer, Heidelberg, 2013.

\bibitem[Vin71]{vinberg_discrete_1971}
Ernest~B. Vinberg, \emph{Discrete linear groups that are generated by
  reflections}, Izv. Akad. Nauk {SSSR.} Ser. Mat. \textbf{35} (1971),
  1072--1112.

\end{thebibliography}

\end{document}